\newcommand{\bb} {\mathbb}
\newcommand{\cal}{\mathcal}
\newcommand{\frk}{\mathfrak}
\newcommand{\wh}{\widehat}
\newcommand{\ep}{\epsilon}
\newcommand{\e}{\mathrm{e}}
\newcommand{\red}{\mathrm{red}}
\newcommand{\tw}{\mathrm{tw}}
\newcommand{\Id}{\mathrm{Id}}
\newcommand{\Rep}{\mathrm{Rep}}
\newcommand{\Aut}{\operatorname{Aut}}
\newcommand{\Coker}{\operatorname{Coker}}
\newcommand{\Ext}{\operatorname{Ext}}
\newcommand{\Ho} {\operatorname{Ho}}
\newcommand{\Hom}{\operatorname{Hom}}
\newcommand{\Iso}{\operatorname{Iso}}
\newcommand{\Ker}{\operatorname{Ker}}
\newcommand{\Obj}{\operatorname{Obj}}
\newcommand{\blob}{{\scriptscriptstyle \bullet}}
\newcommand{\cl}{\widehat}
\newcommand{\longinto}{\lhook\joinrel\longrightarrow}
\newcommand{\ctp}{\mathop{\wh{\otimes}}}
\newcommand{\<}{\langle}
\renewcommand{\>}{\rangle}
\newcommand{\bu}[1]{#1_{\blob}}
\newcommand{\cpx}[4]{
 \begin{xy}
  \xymatrix@=0pt@C=20pt@R=15pt{#1
  \ar@<.5ex>[r]^{#2} & \ar@<.5ex>[l]^{#3}  #4}
 \end{xy}  
}
\theoremstyle{plain}
 \newtheorem{thm}{Theorem}[section]
 \newtheorem{lem}[thm]{Lemma}
 \newtheorem{prop}[thm]{Proposition}
 \newtheorem*{thm*}{Theorem}
\theoremstyle{definition}
 \newtheorem{dfn}[thm]{Definition}
 \newtheorem{fct}[thm]{Fact}
 \newtheorem{df}[thm]{Definition/Fact}
 \newtheorem{rmk}[thm]{Remark}
\theoremstyle{remark}
\numberwithin{equation}{section}
\begin{document}


\title{Bialgebra structure on Bridgeland's Hall algebra of two-periodic complexes}
\author{Shintarou Yanagida}
\address{Research Institute for Mathematical Sciences,
Kyoto University, Kyoto 606-8502, Japan}
\email{yanagida@kurims.kyoto-u.ac.jp}

\thanks{The author is supported by JSPS Fellowships 
for Young Scientists (No. 24-4759).
This work is also partially supported by the 
JSPS Strategic Young Researcher 
Overseas Visits Program for Accelerating Brain Circulation
``Deepening and Evolution of Mathematics and Physics,
Building of International Network Hub based on OCAMI''}

\subjclass[2010]{16T10,17B37}
\date{April 25, 2013}


\begin{abstract}
We study the bialgebra structure of 
the Hall algebra of two-periodic complexes 
recently introduced by Bridgeland.
We introduce coproduct on Bridgeland's Hall algebra,
and show that 
in the hereditary case 
the resulting bialgebra structure coincides 
with that on  Drinfeld double of the ordinary Hall algebra 
\end{abstract}

\maketitle
\tableofcontents


\section{Introduction}


\subsection{}

This paper is a sequel to our previous paper \cite{Y},
where we studied the Hall algebra of 
$\bb{Z}/2\bb{Z}$-graded complexes
introduced by Bridgeland \cite{B}.
For any abelian category $\cal{A}$
over a finite field $\frk{k} := \bb{F}_q$
with finite dimensional morphism spaces, 
one can consider the Hall algebra $\cal{H}(\cal{A})$,
which was introduced by Ringel \cite{R1},
and its twisted version $\cal{H}_{\tw}(\cal{A})$.
Let us denote by 
$\cal{P} \subset \cal{A}$ be the subcategory of projective objects.
The algebra $\cal{DH}(\cal{A})$ Bridgeland introduced 
is a localization of $\cal{H}_{\tw}(\cal{C}(\cal{P}))$ 
by the set of acyclic complexes:
\begin{align}
\label{eq:B:embedding}
\cal{DH}(\cal{A}) := 
\cal{H}_{\tw}(\cal{C}(\cal{P}))
\bigl[ [\bu{M}]^{-1} \mid H_*(\bu{M})=0\bigr].
\end{align}
Here $\cal{C}(\cal{P}) \equiv \cal{C}_{\bb{Z}/2\bb{Z}}(\cal{P})$ is 
the category of $\bb{Z}/2\bb{Z}$-graded complexes 
of objects in $\cal{P}$,
which is a subcategory of the abelian category 
$\cal{C}(\cal{A}) \equiv \cal{C}_{\bb{Z}/2\bb{Z}}(\cal{A})$
of $\bb{Z}/2\bb{Z}$-graded complexes of objects in $\cal{A}$.
$\cal{H}_{\tw}(\cal{C}(\cal{P}))$ means a subalgebra of 
the twisted Hall algebra $\cal{H}_{\tw}(\cal{C}(\cal{A}))$
generated by $\cal{C}(\cal{P})$.

In \cite{B}, it was shown that 
$\cal{DH}(\cal{A})$ is an associative algebra with unit,
and that it has a nice basis if $\cal{A}$ is hereditary. 
As a result, Bridgeland was able to show \cite[Theorem~1.2]{B} that 
one has an embedding of algebras
$$
 U_t(\frk{g}_Q) \longinto \cal{DH}_{\red}(\Rep_{\frk{k}}(Q)).
$$
Here $\cal{A}=\Rep_{\frk{k}}(Q)$ 
is the category of finite-dimensional 
$\frak{k}$-representations of a finite quiver $Q$ 
without oriented cycles,
and $\cal{DH}_{\red}(\cal{A})$ is a certain quotient 
of $\cal{DH}(\cal{A})$.
$\frk{g}_Q$ is the derived Kac-Moody Lie algebra 
associated to $Q$,
and $U_t(\frk{g}_Q)$ is the quantized enveloping algebra of $\frk{g}_Q$
with $t=\sqrt{q}$ a fixed square root of $q = \# \frak{k}$.

The result \eqref{eq:B:embedding} 
is a natural extension of the classical result due to 
Ringel \cite{R1} and Green \cite{Gr},
where the upper half (or the Borel part) 
of the whole quantum group is realized 
as a subalgebra of the ordinary (twisted) Hall algebra.
\\

In the previous paper \cite{Y},
we showed that for a general hereditary category $\cal{A}$
the algebra $\cal{DH}(\cal{A})$ 
is, as an associative algebra, 
isomorphic to the Drinfeld double \cite{D}
of the extended Hall bialgebra $\cal{H}^{\e}_{\tw}(\cal{A})$
as an associative algebra.
Let us mention that 
this claim was stated without proof in \cite[Theorem~1.2]{B}.
Combining it with the result of Cramer \cite{C},
one can prove the invariance of $\cal{DH}(\cal{A})$ 
under the derived equivalence of $\cal{A}$.
Let us also mention that in a recent work of Gorsky \cite{Go}
more abstract approach is taken to prove the derived equivalence.
\\

A natural question on these results is 
whether one can introduce a coproduct and a bialgebra structure 
on $\cal{DH}(\cal{A})$.
In this paper we show that there exists a coproduct 
on $\cal{DH}(\cal{A})$
which gives a bialgebra structure.
Our coproduct is, as a result, a natural analogue of that 
on the ordinary Hall algebra introduced by Green \cite{Gr}.
We recommend \cite[\S~1.5]{S} and \cite[Part I, II]{R2}
for detailed and nice reviews of Green's coproduct 
and the proof of its bialgebra property.
\\

However, our coproduct is in some sense artificial.
One of the obstructions of constructing coproduct 
is that Bridgeland's Hall algebra is defined as a localization of 
(non-commutative) algebra.
Although one can define a coproduct on the unlocalized algebra 
by taking a straightforward analogue of Green's coproduct,
it doesn't descend to the localized algebra.
See \S\ref{subsec:naive} for a detailed explanation.

Our strategy to handle this obstruction is 
substituting the set of exact sequences 
in the category of complexes by a restricted one,
and making the structure constants for coproduct smaller. 
To realize this idea, we use the notion of exact category 
in the sense of Quillen \cite{Q}.
By Hubery \cite{H} one can construct Hall algebra from 
an exact category, and the resulting algebra is indeed 
a unital associative algebra.
We will introduce an exact category structure 
on the category of two-periodic complexes,
and from the resulting Hall algebra for an exact category,
we make a coassociative coproduct on Bridgeland's Hall algebra.
See \S\ref{subsec:exact} and \S\ref{subsec:genuine} for the detail.
Let us mention that our strategy is inspired 
by the recent work \cite{Go}.

In \S\ref{sect:hered} we treat the case where $\cal{A}$ is hereditary. 
In this case our coalgebra structure is compatible 
to the embedding $\cal{H}(\cal{A}) \hookrightarrow \cal{DA}(\cal{H})$ 
of the ordinary Hall algebra into Bridgeland's Hall algebra.
In this sense, our coproduct is a natural analogue of Green's coproduct.

Let us close this introduction by indicating further directions.
The first direction is investigating Hopf algebra structure.
In \cite{X}, Xiao introduced a (topological) Hopf algebra structure 
on the ordinary Hall algebra.
We expect a similar Hopf algebra structure can be introduced 
on the algebra $\cal{DA}(\cal{H})$.
Another direction is the investigation of higher dimensional case.
As mentioned in \cite{B}, $\cal{DA}(\cal{H})$ makes sense 
for abelian category of arbitrary global dimension,
but it looks too large.
Our construction of coproduct might suggest 
that in order to obtain a moderate algebra,
it is better to consider a restriction on the counting of extensions.
We expect that 
stability conditions on  triangulated categories,
which was also introduced by Bridgeland \cite{B2},
is related to this direction.


\subsection{Notations and conventions}
\label{subsect:notation}

Here we indicate several global notations.

$\frk{k} := \bb{F}_q$ is a fixed finite field 
unless otherwise stated,
and all the categories will be $\frk{k}$-linear.
We choose and fix a square root $t:=\sqrt{q}$.

For an abelian category $\cal{A}$, 
we denote by $\Obj(\cal{A})$ the class of objects of $\cal{A}$.
For an object $M$ of $\cal{A}$, 
the class of $M$ in the Grothendieck group $K(\cal{A})$ 
is denoted by $\cl{M}$.
The subcategory of $\cal{A}$ consisting of projective objects 
is denoted by $\cal{P}$.

In our argument we impose several assumptions 
on an abelian category $\cal{A}$.
Let us introduce the following conditions:
\begin{enumerate}
\item[(a)] essentially ($=$ skeletally) small with finite morphism spaces,
\item[(b)] linear over $\frk{k}$,
\item[(c)] of finite global dimension and having enough projectives.
\item[(d)] $\cal{A}$ is hereditary, that is of global dimension at most 1,
\item[(e)] nonzero objects in $\cal{A}$ define nonzero classes in $K(\cal{A})$.
\end{enumerate}

For an abelian category $\cal{A}$ 
which is essentially small, 
the set of its isomorphism classes is 
denoted by $\Iso(\cal{A})$.

For a complex 
$\bu{M}=(\cdots \to M_{i}\xrightarrow{d_i} M_{i+1} \to \cdots )$ 
in an abelian category $\cal{A}$,
its homology is denoted by $H_*(\bu{M})$. 

For a set $S$, 
we denote by $\bigl| S \bigr|$ its cardinality.


\section{Hall algebras of complexes}
\label{sec:sec1}

We summarize necessary definitions and properties of  
Hall algebras of $\bb{Z}/2\bb{Z}$-graded complexes.
Most of the materials were introduced and shown in \cite{B}.


\subsection{$\bb{Z}/2\bb{Z}$-periodic complexes}

We will recall the basic definitions in \cite[\S3]{B}.
Let us fix an abelian category $\cal{A}$.

Let $\cal{C}(\cal{A}) \equiv \cal{C}_{\bb{Z}/2\bb{Z}}(\cal{A})$ 
be the abelian category 
of $\bb{Z}/2\bb{Z}$-graded complexes in $\cal{A}$.  
An object $\bu{M}$ of $\cal{C}(\cal{A})$ consists of 
the following diagram in $\cal{A}$:
\begin{align*}
\cpx{M_1}{d^M_1}{d^M_0}{M_0}, \quad d^M_{i+1}\circ d^M_i=0.
\end{align*} 
A morphism $\bu{s}: \bu{M} \to \bu{N}$ 
consists of a diagram
\begin{align*}
\xymatrix@R=3em@C=3em{
  M_1 \ar[d]_{s_1} \ar@<.5ex>[r]^{d^M_1} 
& M_0 \ar[d]^{s_0} \ar@<.5ex>[l]^{d^M_0}
\\
  N_1 \ar@<.5ex>[r]^{d^N_1}
& N_0 \ar@<.5ex>[l]^{d^N_0}
}
\end{align*}
with $s_{i+1} \circ d^M_{i} = d^N_i\circ s_{i}$.
Here and hereafter indices in the diagram of an object 
in $\cal{C}(\cal{A})$ 
are understood by modulo $2$.
We also denote by $0 \equiv \bu{0}$ the trivial 
$\bb{Z}/2\bb{Z}$-graded complex
(where the graded parts are equal to the zero object of $\cal{A}$).

Two morphisms $\bu{s}, \bu{t}: \bu{M} \to \bu{N}$ 
are said to be homotopic 
if there are morphisms $h_i: M_i\to N_{i+1}$ such that
$t_i-s_i=d'_{i+1} \circ h_{i}+h_{i+1}\circ d_{i}$.
Denote by $\Ho(\cal{A}) \equiv \Ho_{\bb{Z}/2\bb{Z}}(\cal{A})$ 
the category obtained from $\cal{C}(\cal{A})$ 
by identifying homotopic morphisms.

Let us also denote by 
$\cal{C}(\cal{P}) \equiv \cal{C}_{\bb{Z}/2\bb{Z}}(\cal{P}) 
 \subset \cal{C}(\cal{A})$
the full subcategory 
whose objects are complexes of projectives in $\cal{A}$.

For an object $\bu{M}$ of $\cal{C}(\cal{A})$,
we define its class $\cl{\bu{M}}$ 
in the Grothendieck group $K(\cal{A})$ to be 
$$
 \cl{\bu{M}} := \cl{M_0} - \cl{M_1} \in K(\cal{A}).
$$

The shift functor $[1]$ of complexes induces an involution 
$\cal{C}(\cal{A}) \stackrel{*}{\longleftrightarrow} 
\cal{C}(\cal{A})$.
This involution shifts the grading and changes the sign of the differential 
as follows:
\begin{align*}
\bu{M} = \cpx{M_1}{d^M_1}{d^M_0}{M_0}
\quad \stackrel{*}{\longleftrightarrow} \quad 
\bu{M}^{*} = \cpx{M_0}{-d^M_0}{-d^M_1}{M_1}
\end{align*}


\subsection{Bridgeland's Hall algebra of complexes}

Let us recall the definition of 
the ordinary Hall algebra.
For an abelian category $\cal{A}$ satisfying the condition (a),
consider the vector space
$$
 \cal{H}(\cal{A}) := 
 \bigoplus_{A \in \Iso(\cal{A})} \bb{C}[A]
$$
linearly spanned by symbols $[A]$ with $A$ running through 
the set $\Iso(\cal{A})$  
of isomorphism classes of objects in $\cal{A}$.
Then, by Ringel \cite{R1},
the following operation $\diamond$ defines on $\cal{H}(\cal{A})$ 
a structure of unital associative algebra over $\bb{C}$:
\begin{align*}
[A] \diamond [B] := 
\sum_{C \in \Iso(\cal{A})} 
\dfrac{ \bigl| \Ext^1_{\cal{A}}(A,B)_{C} \bigr| }
      { \bigl| \Hom_{\cal{A}}(A,B) \bigr|} 
[C].
\end{align*}
Here 
\begin{align*}
\Ext^1_{\cal{A}}(A,B)_C \subset \Ext^1_{\cal{A}}(A,B)
\end{align*}
is the set parametrizing extensions of $B$ by $A$
with the middle term isomorphic to $C$.
The class $[0]$ of the zero object is the unit for this product $\diamond$,
and the algebra $(\cal{H}(\cal{A}), \diamond, [0])$ is called 
the Hall algebra of $\cal{A}$. 
\\

For an abelian category $\cal{A}$ 
satisfying the condition (a),
Let $\cal{H}(\cal{C}(\cal{P}))$
be the Hall algebra 
of the category $\cal{C}(\cal{P})$.
As a $\bb{C}$-vector space we have  
$\cal{H}(\cal{C}(\cal{P})) = 
 \bigoplus_{\bu{M} \in \Iso(\cal{C}(\cal{P}))} \bb{C}[\bu{M}]$,
and the product is given by the formula
\begin{align*}
[\bu{M}] \diamond [\bu{N}] := 
\sum_{\bu{L} \in \Iso(\cal{C}(\cal{P}))} 
\dfrac{\bigl| \Ext^1_{\cal{C}(\cal{A})}(\bu{M},\bu{N})_{\bu{L}} \bigr|}
      {\bigl| \Hom_{\cal{C}(\cal{A})}(\bu{M},\bu{N}) \bigr|} 
[\bu{L}].
\end{align*}
Since  the subcategory $\cal{C}(\cal{P})\subset\cal{C}(\cal{A})$ 
is closed under extensions 
and 
since the space $\Ext^1_{\cal{C}(\cal{A})}(\bu{M},\bu{N})$ 
is always of finite dimension,
this expression makes sense. 
Indeed,  according to \cite[Lemma~3.3]{B} we have 
$\Ext^1_{\cal{C}(\cal{A})}(\bu{M},\bu{N})
 \cong \Hom_{\Ho(\cal{A})}(\bu{M},\bu{N}^*)$.

\begin{rmk}
\label{rmk:normalization}
As in \cite[\S2.3]{B}, using
$[[\bu{M}]] := [\bu{M}]/\bigl| \Aut_{\cal{C}(\cal{A})}(\bu{M}) \bigr|$
one can rewrite the multiplication as
$$
 [[\bu{M}]] \diamond [[\bu{N}]]
 = \sum_{\bu{L}} g^{\bu{L}}_{\bu{M},\bu{N}} [[\bu{L}]]
$$
with 
\begin{align}
\label{eq:g}
 g^{\bu{L}}_{\bu{M},\bu{N}} := 
 \bigl| \left\{\bu{N}' \subset \bu{L} 
    \mid \bu{N}' \cong \bu{N},\ \bu{L}/\bu{N} \cong \bu{M} 
 \right\}\bigr|
\end{align}
for $\bu{L},\bu{M},\bu{N} \in \Obj(\cal{C}(\cal{A}))$.
Here we used the well-known formula
\begin{align}
\label{eq:gehaaa}
 g^{A}_{B,C}
=\dfrac{\bigl| \Ext_{\cal{A}}(B,C)_A \bigr|}
       {\bigl| \Hom_{\cal{A}}(B,C)   \bigr|}
 \dfrac{\bigl| \Aut_{\cal{A}}(A) \bigr|}
       {\bigl| \Aut_{\cal{A}}(B) \bigr|
        \bigl| \Aut_{\cal{A}}(C) \bigr|}
\end{align}
for any abelian category $\cal{A}$ 
and its objects $A,B,C$.
Then the associativity of $\diamond$,
that is, 
$([\bu{M}] \diamond [\bu{N}]) \diamond [\bu{P}]
= [\bu{M}] \diamond ([\bu{N}] \diamond [\bu{P}])$
for any $\bu{M},\bu{N},\bu{P} \in \Obj(\cal{C}(\cal{A}))$,
is equivalent to the formula
\begin{equation}
\label{eq:dh:assoc}
 \sum_{\bu{Q}} g_{\bu{M},\bu{N}}^{\bu{Q}} g_{\bu{Q},\bu{P}}^{\bu{R}}
=\sum_{\bu{S}} g_{\bu{M},\bu{S}}^{\bu{R}} g_{\bu{N},\bu{P}}^{\bu{S}}
\end{equation}
for any tuple $(\bu{M},\bu{N},\bu{P},\bu{Q})$ 
of objects of $\cal{C}(\cal{A})$.

We will sometimes use this renormalized generators $[[\bu{M}]]$ 
in the argument.
\end{rmk}

Hereafter we consider an abelian category $\cal{A}$ 
satisfying the conditions (a) -- (c).
Define 
$\cal{H}_{\tw}(\cal{C}(\cal{P}))=(\cal{H}(\cal{C}(\cal{P})),*,[0])$ 
to be the pair of the vector space $\cal{H}(\cal{C}(\cal{P}))$ 
with the twisted multiplication
\begin{equation}
\label{eq:tw-mul}
[\bu{M}]*[\bu{N}]
:=t^{\<M_0,N_0\>+\<M_1,N_1\>} 
  [\bu{M}] \diamond [\bu{N}]
\end{equation}
and the class of the zero object.
Here we used the Euler form  
$\<A,B\> 
 := \sum_{i \in \bb{Z}} (-1)^i \dim_{\frk{k}}\Ext^i_{\cal{A}}(A,B)$
on $\cal{A}$.
As is well known, this form descends to the one 
on the Grothendieck group $K(\cal{A})$ of $\cal{A}$,
which is denoted by the same symbol.
We will also use the symbol
\begin{align}
\label{eq:euler-dash}
\<\bu{M},\bu{N}\>' := \<M_0,N_0\>+\<M_1,N_1\>.
\end{align}

Now we can introduce Bridgeland's Hall algebra:

\begin{df} 
Let $\cal{DH}(\cal{A})$  be
the localization of the algebra $\cal{H}_{\tw}(\cal{C}(\cal{P}))$ 
with respect to the elements $[\bu{M}]$ 
corresponding to acyclic complexes $\bu{M}$: 
\begin{align*}
 \cal{DH}(\cal{A})
 := \cal{H}_{\tw}(\cal{C}(\cal{P}))
    \bigl[ [\bu{M}]^{-1} \mid H_*(\bu{M})=0\bigr].
\end{align*}
Then the tuple $(\cal{DH}(\cal{A}), [0], *)$ 
is a unital associative algebra.
\end{df}

\begin{rmk}
\label{rmk:localize}
Let us recall this localization process in detail.
For $P\in \Obj(\cal{P})$, we define 
\begin{align*}
\bu{{K_P}}    := \cpx{P}{\Id}{0}{P}, \qquad 
\bu{{K_P}}^*  := \cpx{P}{0}{-\Id}{P},
\end{align*}
which are obviously acyclic.
By \cite[Lemma~3.2]{B},
every acyclic complex of projectives 
$\bu{M} \in \Obj(\cal{C}(\cal{P}))$
can be written as 
\begin{align}
\label{eq:acyclic}
 \bu{M} \cong \bu{{K_P}} \oplus \bu{{K_Q}}^*,
\end{align}
and $P,Q \in \Obj(\cal{P})$ are determined unique up to isomorphism. 
The complexes $\bu{{K_P}}$ and $\bu{{K_P}}^*$ enjoy the relations
\begin{align*}
&[\bu{{K_P}}] * [\bu{M}] 
= t^{\<\cl{P},\bu{\cl{M}}\>} [\bu{{K_P}} \oplus \bu{M}]
= t^{ (\cl{P},\bu{\cl{M}}) } \bu{M} * \bu{{K_P}},
\\
&[\bu{{K_P}}^*] * [\bu{M}] 
= t^{-\<\cl{P},\bu{\cl{M}}\>} [\bu{{K_P}}^* \oplus \bu{M}]
= t^{- (\cl{P},\bu{\cl{M}}) } \bu{M} * \bu{{K_P}}^*.
\end{align*}
Here we used the symmetrized Euler form:
\begin{align}
\label{eq:sym-euler}
 (\alpha,\beta) := \<\alpha,\beta\> + \<\beta,\alpha\>.
\end{align}
Thus the subset $\left\{ [\bu{M}] \mid H_*(\bu{M})=0\right\}$ 
of $\Iso(\cal{C}(\cal{P}))$ satisfies the Ore condition,
and one can consider the localization 
of the non-commutative algebra $\cal{H}_{\tw}(\cal{C}(\cal{P}))$ 
with respect to this subset.
$\cal{DH}(\cal{A})$ is the algebra obtained by this localization process.
As explained in \cite[\S3.6]{B},
this is the same as localizing by the elements 
$[\bu{{K_P}}]$ and $[\bu{{K_P}^*}]$ 
for all objects $P\in \Obj(\cal{P})$. 
\end{rmk}

For an element $\alpha \in K(\cal{A})$, we define 
\begin{align*}
K_\alpha := [\bu{{K_P}}] * [\bu{{K_Q}}]^{-1},\quad
K_\alpha^* := [\bu{{K_P}}^*] * [\bu{{K_Q}}^*]^{-1},
\end{align*}
where we expressed $\alpha = \cl{P}-\cl{Q}$ 
using the classes of some projectives $P,Q \in \Obj(\cal{P})$.
These are well defined by \cite[Lemmas~3.4, 3.5]{B},
and also we have 
\begin{align}
\label{eq:KM}
&K_{\alpha} * [\bu{M}] 
 = t^{(\alpha,\bu{\cl{M}})}   [\bu{M}] * K_{\alpha},
 \qquad
 K^*_{\alpha} * [\bu{M}] 
 = t^{-(\alpha,\bu{\cl{M}})} [\bu{M}] * K_{\alpha},
\\
\label{eq:KK}
&K_{\alpha} * K_{\beta} = K_{\alpha+\beta}, \qquad
 K^*_{\alpha} * K^*_{\beta} = K^*_{\alpha+\beta}, \qquad
\\
\nonumber
&[K_\alpha,K_\beta] = [K_\alpha,K_\beta^*] = [K_\alpha^*,K_\beta^*]=0
\end{align}
in the algebra $\cal{DH}(\cal{A})$
for arbitrary $\alpha, \beta \in K(\cal{A})$
and $\bu{M} \in \Obj(\cal{C}(\cal{P}))$.

\section{Coproduct}


\subsection{Green's coproduct}
\label{ssubsec:cpr}

Let us recall the coalgebra structure 
on the ordinary Hall algebra 
introduced by Green \cite{Gr}.
Here one should consider a completion of the algebra.
We recommend \cite[Lecture 1]{S} for a nice review 
on this topic.
\\

Assume that the abelian category $\cal{A}$ satisfies 
the conditions (a), (b), (c) and (e).
Then the Hall algebra 
$\cal{H}(\cal{A}) = 
 \bigoplus_{A \in \Iso(\cal{A})} \bb{C}[A]$
is naturally graded 
by the Grothendieck group $K(\cal{A})$ of $\cal{A}$:
\begin{align*}
 \cal{H}(\cal{A}) 
 = \bigoplus_{\alpha \in K(\cal{A})}
 \cal{H}(\cal{A})[\alpha],
 \quad
 \cal{H}(\cal{A})[\alpha]:=\bigoplus_{\cl{A}=\alpha}\bb{C} [A].
\end{align*}
For $\alpha,\beta \in K(\cal{A})$, 
set
\begin{align*}
\cal{H}(\cal{A})[\alpha] 
 \mathop{\wh{\otimes}}_{\bb{C}} 
\cal{H}(\cal{A})[\beta]
 :=\prod_{\cl{A}=\alpha, \, \cl{B}=\beta} 
    \bb{C} [A] \mathop{\otimes}_{\bb{C}} \bb{C} [B],
 \qquad
\cal{H}(\cal{A}) 
 \mathop{\wh{\otimes}}_{\bb{C}} 
\cal{H}(\cal{A})
 :=\prod_{\alpha,\beta \in K(\cal{A})}
   \cal{H}(\cal{A})[\alpha] 
    \mathop{\wh{\otimes}}_{\bb{C}}
   \cal{H}(\cal{A})[\beta].
\end{align*}

Hereafter we will suppress the symbol $\bb{C}$ 
at the tensor product symbol $\otimes$.
The space $\cal{H}(\cal{A}) \ctp \cal{H}(\cal{A})$
consists of all formal linear combinations
$\sum_{A,B} c_{A,B} [A] \otimes [B]$.
The coassociativity in this completed space 
will be called the topological coassociativity.

\begin{fct}[{Green \cite{Gr}}]
\label{fct:Gr}
Assume that  $\cal{A}$ satisfies 
the conditions (a), (b), (c) and (e).
\begin{enumerate}
\item[(1)]
Then the maps
\begin{align*}
&\Delta'_{\cal{H}(\cal{A})} \equiv \Delta': 
 \cal{H}(\cal{A}) \longrightarrow 
 \cal{H}(\cal{A}) \ctp \cal{H}(\cal{A}), 
\\
&\Delta'([A]) :=
  \sum_{B,C} t^{\<B,C\>} 
  \dfrac{\bigl| \Ext^1_{\cal{A}}(B,C)_A \bigr|}
        {\bigl| \Hom_{  \cal{A}}(B,C)   \bigr|}
  \dfrac{\bigl| \Aut_{  \cal{A}}(A)     \bigr|}
        {\bigl| \Aut_{  \cal{A}}(B)     \bigr| 
         \bigl| \Aut_{  \cal{A}}(C)     \bigr|}
   [B] \otimes [C],
\end{align*}
and
\begin{align*}
\ep:\cal{H}(\cal{A}) \longrightarrow \bb{C},\qquad
\ep([A]):=\delta_{A,0}.
\end{align*} 
define a topological counital coassociative coalgebra structure
on $\cal{H}(\cal{A})$.

\item[(2)]
Assume moreover that $\cal{A}$ also satisfies 
the condition (d).
Then the tuple
$(\cal{H}(\cal{A}),\diamond,[0],\Delta',\ep)$,
is a topological bialgebra defined over $\bb{C}$.
That is, the map 
$\Delta': \cal{H}(\cal{A}) \to 
 \cal{H}(\cal{A}) \ctp \cal{H}(\cal{A})$
is a  homomorphism of $\bb{C}$-algebras.
\end{enumerate}
\end{fct}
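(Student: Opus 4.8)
Since the statement is a theorem of Green \cite{Gr}, I will only outline the proof and refer to \cite[Lecture~1]{S} and \cite[Part~I,~II]{R2} for the full computation; the aim of the sketch is to pin down the one homological input that forces the extra hypothesis~(d). Write $a_{A}:=\bigl|\Aut_{\cal A}(A)\bigr|$. \emph{Part (1).} The counit axioms $(\ep\otimes\Id)\circ\Delta'=\Id=(\Id\otimes\ep)\circ\Delta'$ follow at once from $g^{A}_{0,C}=\delta_{A,C}$ and $g^{A}_{B,0}=\delta_{A,B}$. For the (topological) coassociativity I would pass to the renormalized generators $[[A]]=[A]/a_{A}$ of Remark~\ref{rmk:normalization}, in which, by \eqref{eq:gehaaa},
\[
\Delta'\bigl([[A]]\bigr)=\sum_{B,C}t^{\<B,C\>}\,g^{A}_{B,C}\,\frac{a_{B}a_{C}}{a_{A}}\,[[B]]\otimes[[C]].
\]
Applying $(\Delta'\otimes\Id)\circ\Delta'$ and $(\Id\otimes\Delta')\circ\Delta'$ to $[[A]]$ and comparing the coefficient of $[[B]]\otimes[[C]]\otimes[[D]]$, the automorphism factors collapse to the common prefactor $a_{B}a_{C}a_{D}/a_{A}$, and the two Euler twists agree because $\cl Q=\cl B+\cl C$ whenever $g^{Q}_{B,C}\neq0$, so that
\[
\<B,C\>+\<B+C,D\>=\<B,C\>+\<B,D\>+\<C,D\>=\<C,D\>+\<B,C+D\>
\]
by bi-additivity of the Euler form on $K(\cal A)$. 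What remains is exactly $\sum_{Q}g^{Q}_{B,C}g^{A}_{Q,D}=\sum_{S}g^{A}_{B,S}g^{S}_{C,D}$, i.e.\ the analogue of \eqref{eq:dh:assoc} with $\cal C(\cal A)$ replaced by $\cal A$; by the purely formal argument of Remark~\ref{rmk:normalization} this is equivalent to associativity of the Hall product $\diamond$ on $\cal H(\cal A)$, which holds by Ringel \cite{R1}. Finally one checks that all the maps involved are well defined on the completed tensor powers: $\Delta'$ carries $\cal H(\cal A)[\alpha]$ into $\prod_{\beta+\gamma=\alpha}\cal H(\cal A)[\beta]\ctp\cal H(\cal A)[\gamma]$, and condition~(e) is precisely what makes the $K(\cal A)$-grading (hence this completion) well behaved, in particular $\cal H(\cal A)[0]=\bb C[0]$. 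No heredity enters here, in accordance with the hypotheses of~(1).

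\emph{Part (2).} Assume now that $\cal A$ also satisfies~(d). Since $\Delta'(xy)=\Delta'(x)\Delta'(y)$ is bilinear in $(x,y)$, and the target is the completed tensor square $\cal H(\cal A)\ctp\cal H(\cal A)$ equipped with its Euler-twisted multiplication (the naive tensor-product algebra structure does not work, already for $\cal A=\mathrm{Vect}_{\frk k}$; see \cite[Lecture~1]{S}, \cite[Part~I,~II]{R2}), it suffices to verify $\Delta'([A]\diamond[B])=\Delta'([A])\cdot\Delta'([B])$ for all $A,B\in\cal A$. I would expand the left-hand side as $\sum_{L}\bigl(|\Ext^{1}_{\cal A}(A,B)_{L}|/|\Hom_{\cal A}(A,B)|\bigr)\,\Delta'([L])$, and the right-hand side by first applying $\Delta'$ to each factor and then re-multiplying in the tensor square, rewriting everything in the basis of tensors $[[X]]\otimes[[Y]]$ by means of \eqref{eq:g} and \eqref{eq:gehaaa}. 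Once the automorphism factors cancel and the Euler twists are collected, equality of the coefficients of $[[X]]\otimes[[Y]]$ reduces to a single numerical identity among the Hall numbers of $\cal A$ --- \emph{Green's formula} --- of the shape
\[
\sum_{L}\frac{g^{L}_{A,B}\,g^{L}_{X,Y}}{a_{L}}
=\sum_{E,F,G,H}t^{m}\,
\frac{g^{A}_{E,F}\,g^{B}_{G,H}\,g^{X}_{E,G}\,g^{Y}_{F,H}}{a_{E}\,a_{F}\,a_{G}\,a_{H}},
\]
where $m$ is an explicit $\bb Z$-linear combination of values of the Euler form on the classes $\cl E,\cl F,\cl G,\cl H$.

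\emph{The main obstacle.} Establishing Green's formula is the step that requires real work, and the only place where~(d) is used. I would prove it by a weighted bijection: the left-hand side counts pairs of short exact sequences $0\to B\to L\to A\to 0$ and $0\to Y\to L\to X\to 0$ with the same middle term $L$, while the right-hand side counts commutative diagrams of nine objects whose four ``corners'' $E,F,G,H$ are the sub- and quotient objects extracted from the two filtrations of $L$; one shows that this correspondence is a bijection whose fibres have the stated cardinalities. Heredity, i.e.\ the vanishing of $\Ext^{2}_{\cal A}$, is exactly what guarantees, on the one hand, that every admissible diagram of corners $(E,F,G,H)$ can be reassembled into some $L$ and, on the other, that the connecting and obstruction maps controlling the fibres vanish, so that the fibre counts are the advertised powers of $t$ rather than being corrected by higher $\Ext$-groups. (This is where a general abelian category fails and one genuinely needs global dimension at most~$1$.) Substituting Green's formula back into the coefficient comparison of Part~(2) gives $\Delta'([A]\diamond[B])=\Delta'([A])\cdot\Delta'([B])$, whence $(\cal H(\cal A),\diamond,[0],\Delta',\ep)$ is a topological bialgebra; for the complete arithmetic I refer to \cite{Gr}, \cite[Lecture~1]{S} and \cite[Part~I,~II]{R2}.
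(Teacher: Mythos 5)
Note first that the paper offers no proof of this statement: it is quoted as a Fact due to Green \cite{Gr}, with \cite[Lecture~1]{S} and \cite{R2} given as references, so your proposal can only be compared with the standard literature argument --- and your outline is indeed that standard route. Part (1) is correct, and it is in substance the same computation the paper itself carries out later for the complex analogue (Lemma~\ref{lem:coassoc:hcp}): pass to $[[A]]=[A]/a_A$, use that $\cl{Q}=\cl{B}+\cl{C}$ whenever $g^{Q}_{B,C}\neq 0$ together with bi-additivity of $\<\cdot,\cdot\>$ to match the twists, and reduce coassociativity to the Hall-number identity \eqref{eq:dh:assoc} for $\cal{A}$; the counit check and the role of condition (e) are also handled correctly.

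In part (2) the strategy (reduce everything to Green's formula, heredity entering only there) is the right one, but two of your concrete claims are off. First, the displayed ``shape'' of Green's formula is wrong: with the conventions of \cite[Lecture~1]{S} the automorphism factors on the right-hand side are $a_{E}a_{F}a_{G}a_{H}/(a_{A}a_{B}a_{X}a_{Y})$ (corners over sides), not $1/(a_{E}a_{F}a_{G}a_{H})$, and the twist is the single value $q^{-\<E,H\>}$ on a diagonal pair of corners, i.e.
\[
\sum_{L}\frac{g^{L}_{A,B}\,g^{L}_{X,Y}}{a_{L}}
=\sum_{E,F,G,H} q^{-\<E,H\>}\,
 g^{A}_{E,F}\,g^{B}_{G,H}\,g^{X}_{E,G}\,g^{Y}_{F,H}\,
 \frac{a_{E}\,a_{F}\,a_{G}\,a_{H}}{a_{A}\,a_{B}\,a_{X}\,a_{Y}} .
\]
For $\cal{A}$ the category of finite-dimensional $\frk{k}$-vector spaces and $A=\frk{k}^{2}$, $B=0$, $X=Y=\frk{k}$, the left-hand side is $1/\bigl(q(q-1)^{2}\bigr)$ while your version gives $t^{m}(q+1)/(q-1)^{2}$, which no choice of $m$ repairs. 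Second, the twist bookkeeping on the tensor square: with the \emph{untwisted} product $\diamond$ and the twisted coproduct $\Delta'$, no multiplication on $\cal{H}(\cal{A})\ctp\cal{H}(\cal{A})$ twisted by a power of $t$ bilinear in the classes makes $\Delta'$ multiplicative --- in your own test case the coefficient of $[\frk{k}]\otimes[\frk{k}]$ in $\Delta'([\frk{k}]\diamond[\frk{k}])$ is $t+t^{-1}$, whereas the product of $\Delta'([\frk{k}])$ with itself yields $1+t^{c}$ for a single integer $c$. The clean statement (and the one the rest of the paper actually uses) is for the twisted product \eqref{eq:tw-mul2}, with the tensor square carrying the multiplication $([A]\otimes[B])*([C]\otimes[D])=t^{(\cl{B},\cl{C})}([A]*[C])\otimes([B]*[D])$ twisted by the symmetrized Euler form \eqref{eq:sym-euler}. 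Both points are fixable by citing the precise statements in \cite{S} or \cite{R2}, which is consistent with the fact that, like the paper, you do not prove Green's formula itself but only describe the counting argument behind it.
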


\begin{rmk}
As in Remark \ref{rmk:normalization},
we can rewrite the definition of our coproduct 
using $[[A]] := [A]/\bigl| \Aut_{\cal{A}}(A) \bigr|$ 
into the form 
\begin{align}
\label{eq:Delta':ord:norm}
 \Delta'([[A]])
 = \sum_{B,C}   t^{\<B,C\>}
   g^A_{B,C} \dfrac{ a_B a_C }{ a_A }
   [[B]] \otimes [[C]]
\end{align}
with $a_A := \bigl| \Aut_{\cal{A}}(A) \bigr|$ and 
$g^{A}_{B,C} := 
 \bigl| \{C' \subset A  \mid C' \cong C,\ A/C' \cong B \} \bigr|
$.
\end{rmk}


\subsection{Naive coproduct on Hall algebra of complexes}
\label{subsec:naive}

To introduce a coproduct on Bridgeland's Hall algebra
$\cal{DH}(\cal{A})$,
we begin with the unlocalized algebra $\cal{H}_{\tw}(\cal{C}(\cal{P}))$.

\begin{dfn}
For an abelian category $\cal{A}$ satisfying
the conditions (a), (b), (c) and (e),
we define a $\bb{C}$-linear map 
$$
\Delta'_{\chi}: 
 \cal{H}(\cal{C}(\cal{P}))
 \longrightarrow
 \cal{H}(\cal{C}(\cal{P})) 
 \ctp
 \cal{H}(\cal{C}(\cal{P}))
$$
by 
\begin{align}
\label{eq:delta':hcp}
 \Delta'_{\chi}([\bu{L}])
 := \sum_{\bu{M},\bu{N}} 
     t^{\chi(\bu{M},\bu{N})} 
     \dfrac{\bigl| \Ext^1_{\cal{C}(\cal{A})}(\bu{M},\bu{N})_{\bu{L}} \bigr|}
           {\bigl| \Hom_{  \cal{C}(\cal{A})}(\bu{M},\bu{N}) \bigr|}
     \dfrac{\bigl| \Aut_{\cal{C}(\cal{A})}(\bu{L}) \bigr|}
           {\bigl| \Aut_{\cal{C}(\cal{A})}(\bu{M}) \bigr|
            \bigl| \Aut_{\cal{C}(\cal{A})}(\bu{N}) \bigr|}
     [\bu{M}] \otimes [\bu{N}].
\end{align}
Here $\chi$ is an arbitrary map from
$\Iso(\cal{C}(\cal{P})) \times \Iso(\cal{C}(\cal{P}))$ 
to $\bb{Z}$ 
satisfying the condition
\begin{align}
\label{eq:chi}
\begin{split}
 \text{If} \quad 
&\cl{M_i}=\cl{P_i}+\cl{Q_i}
 \ \text{ and } \ 
 \cl{R_i}=\cl{Q_i}+\cl{N_i}
 \ \text{ for }\ 
 i=0,1,
\\
 \text{then} \quad
&\chi(\bu{M},\bu{N}) + \chi(\bu{P}, \bu{Q}) 
=\chi(\bu{P},\bu{R}) + \chi(\bu{Q}, \bu{N}).
\end{split}
\end{align}
We also define 
$$
  \ep: \cal{H}(\cal{C}(\cal{P})) \longrightarrow \bb{C}
$$
by
\begin{align}
\label{eq:counit:tw}
 \ep([\bu{M}]) := \delta_{\bu{M},0}.
\end{align}
\end{dfn}

\begin{rmk}
\label{rmk:coalg}
\begin{enumerate}
\item
Examples for $\chi$ is the Euler form $\<\cdot,\cdot\>'$ 
appearing in \eqref{eq:euler-dash}.
We can transpose the entries and can multiply the form by scalar.
So $\chi(\bu{M},\bu{N}):=-\<\bu{N},\bu{M}\>'$ also satisfies
the condition \eqref{eq:chi}.

\item
As in Remark \ref{rmk:normalization},
we can rewrite our coproduct using 
$[[\bu{L}]] := [\bu{L}]/\bigl| \Aut_{\cal{C}(\cal{A})}(\bu{L}) \bigr|$ 
into the form 
$$
 \Delta'_{\chi}([[\bu{L}]])
 = \sum_{\bu{M},\bu{N}}  
    t^{\chi(\bu{M},\bu{N})}
    g^{\bu{L}}_{\bu{M},\bu{N}} 
    \dfrac{ a_{\bu{M}} a_{\bu{N}} }{ a_{\bu{L}} }
    [[\bu{M}]] \otimes [[\bu{N}]] 
$$
with 
\begin{align}
\label{eq:a}
 a_{\bu{L}} := \bigl| \Aut_{\cal{C}(\cal{A})}(\bu{L}) \bigr|.
\end{align}
We will often use the symbols $[[\bu{L}]]$ and $a_{\bu{L}}$ 
in the following argument.
\end{enumerate}
\end{rmk}

\begin{lem}
\label{lem:coassoc:hcp}
Assume that an abelian category $\cal{A}$ satisfying
the conditions (a), (b), (c) and (e).
Then the triple 
$\bigl( \cal{H}(\cal{C}(\cal{P})),\Delta'_{\chi},\ep \bigr)$ 
is a topological counital coassociative coalgebra.
\end{lem}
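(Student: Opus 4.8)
The plan is to reduce the topological coassociativity of $\Delta'_{\chi}$ to the Hall associativity identity \eqref{eq:dh:assoc} together with the additivity condition \eqref{eq:chi} on $\chi$, in the spirit of the bialgebra proof for Green's coproduct (see \cite[Part~II]{R2} or \cite[Lecture~1]{S}); the twist $t^{\chi}$ is the only genuinely new ingredient.

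First I would pass to the renormalized generators, for which
\[
 \Delta'_{\chi}([[\bu{L}]])
 = \sum_{\bu{M},\bu{N}} t^{\chi(\bu{M},\bu{N})}\,g^{\bu{L}}_{\bu{M},\bu{N}}\,
   \frac{a_{\bu{M}}\,a_{\bu{N}}}{a_{\bu{L}}}\;[[\bu{M}]]\otimes[[\bu{N}]].
\]
Expanding $(\Delta'_{\chi}\otimes\Id)\Delta'_{\chi}([[\bu{L}]])$ and $(\Id\otimes\Delta'_{\chi})\Delta'_{\chi}([[\bu{L}]])$ and reading off the coefficient of $[[\bu{P}]]\otimes[[\bu{S}]]\otimes[[\bu{N}]]$, all automorphism cardinalities collapse to the common factor $a_{\bu{P}}\,a_{\bu{S}}\,a_{\bu{N}}/a_{\bu{L}}$, so that coassociativity becomes the identity
\[
 \sum_{\bu{Q}} t^{\chi(\bu{P},\bu{S})+\chi(\bu{Q},\bu{N})}\,g^{\bu{Q}}_{\bu{P},\bu{S}}\,g^{\bu{L}}_{\bu{Q},\bu{N}}
 \;=\;
 \sum_{\bu{R}} t^{\chi(\bu{P},\bu{R})+\chi(\bu{S},\bu{N})}\,g^{\bu{L}}_{\bu{P},\bu{R}}\,g^{\bu{R}}_{\bu{S},\bu{N}}
\]
for all $\bu{L},\bu{P},\bu{S},\bu{N}\in\Obj(\cal{C}(\cal{P}))$. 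The sums over the intermediate objects $\bu{Q},\bu{R}$ are over $\cal{C}(\cal{P})$, but they agree with the corresponding sums over $\cal{C}(\cal{A})$ in \eqref{eq:dh:assoc}: $\cal{C}(\cal{P})$ is closed under extensions, and a subobject $\bu{R}\subset\bu{L}$ with quotient in $\cal{C}(\cal{P})$ again lies in $\cal{C}(\cal{P})$ because $0\to R_i\to L_i\to L_i/R_i\to 0$ splits.

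To prove this identity I would treat the scalar and the counting parts separately. The counting part $\sum_{\bu{Q}} g^{\bu{Q}}_{\bu{P},\bu{S}}\,g^{\bu{L}}_{\bu{Q},\bu{N}} = \sum_{\bu{R}} g^{\bu{L}}_{\bu{P},\bu{R}}\,g^{\bu{R}}_{\bu{S},\bu{N}}$ is \eqref{eq:dh:assoc} under the substitution $(\bu{M},\bu{N},\bu{P},\bu{Q},\bu{R},\bu{S})\mapsto(\bu{P},\bu{S},\bu{N},\bu{Q},\bu{L},\bu{R})$. For the scalar part, a nonvanishing summand on the left forces $\cl{Q_i}=\cl{P_i}+\cl{S_i}$ and one on the right forces $\cl{R_i}=\cl{S_i}+\cl{N_i}$ for $i=0,1$; choosing in the left case any auxiliary object $\bu{R}$ with $\cl{R_i}=\cl{S_i}+\cl{N_i}$ (say $\bu{R}=\bu{S}\oplus\bu{N}$) and in the right case any auxiliary $\bu{Q}$ with $\cl{Q_i}=\cl{P_i}+\cl{S_i}$, condition \eqref{eq:chi} --- applied with its objects $(\bu{M},\bu{N},\bu{P},\bu{Q},\bu{R})$ taken to be $(\bu{Q},\bu{N},\bu{P},\bu{S},\bu{R})$ --- gives $\chi(\bu{P},\bu{S})+\chi(\bu{Q},\bu{N})=\chi(\bu{P},\bu{R})+\chi(\bu{S},\bu{N})$. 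Hence the exponent is one and the same integer $\kappa=\kappa(\bu{P},\bu{S},\bu{N})$ throughout both sums (that the two reference values coincide is \eqref{eq:chi} applied to $\bu{Q}=\bu{P}\oplus\bu{S}$, $\bu{R}=\bu{S}\oplus\bu{N}$), so $t^{\kappa}$ pulls out of both sides and the identity reduces to the counting part. The counit axioms are then immediate: $\Ext^1_{\cal{C}(\cal{A})}(0,\bu{N})_{\bu{L}}$ and $\Ext^1_{\cal{C}(\cal{A})}(\bu{N},0)_{\bu{L}}$ are singletons when $\bu{L}\cong\bu{N}$ and empty otherwise, and $\chi(0,\bu{N})=\chi(\bu{N},0)=\chi(0,0)=0$ (the constancy follows from \eqref{eq:chi} with suitable objects set to $0$, and the value is normalized to $0$).

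Finally I would check that the two iterated coproducts actually lie in $\cal{H}(\cal{C}(\cal{P}))\ctp\cal{H}(\cal{C}(\cal{P}))\ctp\cal{H}(\cal{C}(\cal{P}))$, i.e.\ that $\Delta'_{\chi}\otimes\Id$ and $\Id\otimes\Delta'_{\chi}$ extend to the completed tensor square of $\cal{H}(\cal{C}(\cal{P}))$ when applied to $\Delta'_{\chi}([\bu{L}])$. This is where conditions (a) and (e) enter: for fixed $\bu{L}$ and $\bu{N}$ the $\bu{Q}$ with $g^{\bu{L}}_{\bu{Q},\bu{N}}\neq0$ are the quotients $\bu{L}/\bu{N}'$ for subobjects $\bu{N}'\subset\bu{L}$ with $\bu{N}'\cong\bu{N}$, of which there are finitely many since such $\bu{N}'$ is the image of one of the finitely many morphisms $\bu{N}\to\bu{L}$; symmetrically, for fixed $\bu{L},\bu{P}$ only finitely many $\bu{R}$ satisfy $g^{\bu{L}}_{\bu{P},\bu{R}}\neq0$, being kernels of the finitely many epimorphisms $\bu{L}\to\bu{P}$. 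Hence the coefficient of each basis element $[[\bu{P}]]\otimes[[\bu{S}]]\otimes[[\bu{N}]]$ in either iterated coproduct is a finite sum, everything is well defined, and the coefficient-wise computation above yields the topological coassociativity. I expect the algebraic heart --- the reduction to \eqref{eq:dh:assoc} and \eqref{eq:chi}, the latter having evidently been tailored for exactly this step --- to be routine, so the only genuinely delicate point is the completion bookkeeping just described.
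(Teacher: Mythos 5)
Your proposal is correct and follows essentially the same route as the paper: pass to the renormalized generators $[[\bu{L}]]$, reduce topological coassociativity to the identity $\sum t^{\chi+\chi} g\,g = \sum t^{\chi+\chi} g\,g$, use condition \eqref{eq:chi} to pull the common power of $t$ out of both sums, and invoke the associativity identity \eqref{eq:dh:assoc}. The extra bookkeeping you supply (finiteness of the intermediate sums, closure of $\cal{C}(\cal{P})$ under the relevant extensions, and the normalization $\chi(0,\cdot)=\chi(\cdot,0)=0$ for the counit) is consistent with, and slightly more detailed than, the paper's ``it is easy to see'' treatment of these points.
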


\begin{proof}
Our argument is almost the same as 
that for the proof of the coassociativity of 
the ordinary Hall algebra (Fact~\ref{fct:Gr}(1)),
but for completeness we will write down a proof.

The topological coassociativity of $\Delta'_{\chi}$,
that is 
$(\Delta'_{\chi} \otimes 1)\circ\Delta'_{\chi}([[\bu{L}]])
=(1 \otimes \Delta'_{\chi})\circ\Delta'_{\chi}([[\bu{L}]])$
on the completed tensor product 
$\cal{H}(\cal{C}(\cal{P})) 
 \ctp \cal{H}(\cal{C}(\cal{P})) 
 \ctp \cal{H}(\cal{C}(\cal{P}))$
for any object $\bu{L}$ in $\cal{H}(\cal{C}(\cal{P}))$,
is equivalent to the following formula
\begin{align*}
&\sum_{\bu{M},\bu{N},\bu{P},\bu{Q}}
  t^{ \chi(\bu{M},\bu{N}) }
  g^{\bu{L}}_{\bu{M},\bu{N}}
  \dfrac{a_{\bu{M}} a_{\bu{N}}}{a_{\bu{L}}}
  t^{ \chi(\bu{P},\bu{Q}) }
  g^{\bu{M}}_{\bu{P},\bu{Q}}
  \dfrac{a_{\bu{P}} a_{\bu{Q}}}{a_{\bu{M}}}
  [[\bu{P}]] \otimes [[\bu{Q}]] \otimes [[\bu{N}]] \\
=
&\sum_{\bu{M},\bu{N},\bu{P},\bu{Q}}
  t^{ \chi(\bu{M},\bu{N}) }
  g^{\bu{L}}_{\bu{M},\bu{N}}
  \dfrac{a_{\bu{M}} a_{\bu{N}}}{a_{\bu{L}}}
  t^{ \chi(\bu{P},\bu{Q}) }
  g^{\bu{N}}_{\bu{P},\bu{Q}}
  \dfrac{a_{\bu{P}} a_{\bu{Q}}}{a_{\bu{N}}}
  [[\bu{M}]] \otimes [[\bu{P}]] \otimes [[\bu{Q}]].
\end{align*}
One can see that it is equivalent to the formula
\begin{align*}
\sum_{\bu{M}}
  t^{ \chi(\bu{M},\bu{N}) + \chi(\bu{P},\bu{Q})}
  g^{\bu{L}}_{\bu{M},\bu{N}} g^{\bu{M}}_{\bu{P},\bu{Q}}
  \dfrac{ a_{\bu{N}} a_{\bu{P}} a_{\bu{Q}} }{ a_{\bu{L}} }
=
\sum_{\bu{R}}
  t^{ \chi(\bu{P},\bu{R}) + \chi(\bu{Q},\bu{N}) }
  g^{\bu{L}}_{\bu{P},\bu{R}}   g^{\bu{R}}_{\bu{Q},\bu{N}}
  \dfrac{ a_{\bu{N}} a_{\bu{P}} a_{\bu{Q}} }{ a_{\bu{L}} }
\end{align*}
for any tuple $(\bu{L},\bu{N},\bu{P},\bu{Q})$ of 
objects in $\cal{C}(\cal{P})$.
By the condition \eqref{eq:chi},
the last formula is reduced to
$$
\sum_{\bu{M}}
  g^{\bu{M}}_{\bu{P},\bu{Q}} g^{\bu{L}}_{\bu{M},\bu{N}} 
=
\sum_{\bu{R}}
  g^{\bu{L}}_{\bu{P},\bu{R}}   g^{\bu{R}}_{\bu{Q},\bu{N}},
$$
which is nothing but the consequence of the 
associativity \eqref{eq:dh:assoc} of the product $\diamond$.

It is easy to see that $\ep$ gives a counit.
Thus we have the conclusion.
\end{proof}

\begin{rmk}
\label{rmk:failure}
Now we want to consider the localized algebra 
$
\cal{DH}(\cal{A})
 = \cal{H}_{\tw}(\cal{C}(\cal{P}))
    \bigl[ [\bu{M}]^{-1} \mid H_*(\bu{M})=0 \bigr]$.
Let us denote by 
$$
 S := \left\{ [\bu{M}] \mid H_*(\bu{M})=0 \right\}
$$ 
the subset of $C := \cal{H}_{\tw}(\cal{C}(\cal{P}))$
used in the localization.
If $S$ spans a coideal 
with respect to the coproduct $\Delta'_\chi$,
in other words 
$\Delta'_\chi(S) \subset S \ctp C + C \ctp S$,
then
the coalgebra structure 
$\bigl( \cal{H}(\cal{C}(\cal{P})),
        \Delta'_{\chi} \bigr)$
descends to 
$\bigl( C/\mathop{\text{Span}}(S), \Delta'_\chi \bigr)$.

However, this strategy does not work.
Consider the exact sequence
\begin{align*}
\xymatrix@1{
  0     \ar[r]
&\bu{N} \ar[r]
&\bu{L} \ar[r]
&\bu{M} \ar[r]
&0
}
\end{align*}
in $\cal{C}(\cal{P})$,
which is expressed as the exact commutative diagram
\begin{align}
\label{diag:failure}
\xymatrix{
 0 \ar[r] 
&0 \ar[r]         \ar@<-.4ex>[d]
&P \ar[r]^{\Id_P} \ar@<-.4ex>[d]_{\Id_P}
&P \ar[r]         \ar@<-.4ex>[d]
&0
\\
 0 \ar[r]  
&P \ar[r]_{\Id_P} \ar@<-.4ex>[u]
&P \ar[r]         \ar@<-.4ex>[u]_{0}
&0 \ar[r]         \ar@<-.4ex>[u]
&0
}
\end{align}
with $P \in \Obj(\cal{P})$.
In this case we have
$H_*(\bu{L})=0$ but $H_*(\bu{M})\neq0$ and 
$H_*(\bu{N})\neq0$.
Thus we have 
$\Delta'_\chi(S) \not\subset S \ctp C + C \ctp S$.

In the next subsection 
we impose certain conditions on the exact sequences 
in $\cal{C}(\cal{P})$ which are counted in the desired coproduct formula.
For such a purpose,
it is convenient to recall the notion of exact categories 
in the sense of Quillen \cite{Q}.
\end{rmk}


\subsection{Hall algebras of exact categories and descent of coproduct}
\label{subsec:exact}

We follow the description of an exact category
given in \cite[\S2]{H} and \cite[Appendix~A]{K}.

An exact category in the sense of Quillen \cite{Q}
is a pair $(\cal{A},\cal{E})$ of 
an additive category $\cal{A}$ 
and a class $\cal{E}$ of kernel-cokernel pairs $(f,g)$ 
closed under isomorphisms,
satisfying the following axioms.
A deflection mentioned in these axioms 
is the first component of some $(f,g) \in \cal{E}$,
and a inflation is the second component.
A pair $(f,g) \in \cal{E}$ will be called a conflation.
\begin{enumerate}
\item[(Ex0)] 
 $0 \xrightarrow{\ \Id \ } 0$ is a deflation.
\item[(Ex1)]
 The composition of two deflations is a deflation.
\item[(Ex2)]
 For any $f: Z' \to Z$ and a deflation $d: Y \to Z$,
 there is a cartesian square
 $$
 \xymatrix{
  Y' \ar[r]^{d'} \ar[d]_{f'} 
     \ar@{}[rd]|{\square}
 &Z' \ar[d]^{f}
 \\
  Y  \ar[r]_{d} 
 &Z 
 }
 $$
 with $d'$ a deflation.
\item[{(Ex$2^{op}$)}]
 For any $f: X \to X'$ and each inflation $i: X \to Y$,
 there is a cocartesian square
 $$
 \xymatrix{
  X \ar[r]^{i} \ar[d]_{f'} 
     \ar@{}[rd]|{\square}
 &Y \ar[d]^{f}
 \\
  X'  \ar[r]_{i'} 
 &Y' 
 }
 $$
 with $i'$ an inflation.
\end{enumerate}

The notion of Hall algebra for an exact category 
was introduced in \cite{H},
and the associativity of the algebra was shown:

\begin{fct}[{\cite[Theorem.~3]{H}}]
\label{fct:exact}
Let $(\cal{A},\cal{E})$ be an exact category 
with $\cal{A}$ essentially small and 
having finite morphism spaces.

Let us define a free $\bb{Z}$-module 
$$
 \cal{H}(\cal{A},\cal{E})
 := \bigoplus_{X \in \Iso(\cal{A})} \bb{Z} [X]
$$ 
and introduce a binary operator $\diamond$ by 
$$
 [X] \diamond [Y]
 := \sum_{Z} 
    \dfrac{\bigl| W^Z_{X,Y} \bigr|}
          {\bigl| \Hom_{\cal{A}}(X,Y) \bigr|} 
    [Z].
$$
Here $W^Z_{X,Y}$ is the set of all conflations 
of the form $Y \to Z \to X$.

Then $(\cal{H}(\cal{A},\cal{E}), \diamond, [0])$ 
is a unital associative ring.
\end{fct}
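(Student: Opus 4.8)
The plan is to mimic the proof of associativity for the ordinary Hall algebra recalled above, whose combinatorial core is the identity~\eqref{eq:dh:assoc}; the extra work lies entirely in checking that the relevant ``subobject calculus'' survives for an arbitrary exact category. The unit axiom is immediate, since a conflation $Y\to Z\to 0$ forces $Z\cong Y$, so $[0]\diamond[X]=[X]=[X]\diamond[0]$. For associativity it suffices, as in Remark~\ref{rmk:normalization}, to compare the coefficient of a fixed $[W]$ on the two sides of $([X]\diamond[Y])\diamond[Z]=[X]\diamond([Y]\diamond[Z])$, for all $X,Y,Z,W\in\Iso(\cal A)$.

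First I would pass to the renormalised generators $[[X]]:=[X]/\bigl|\Aut_{\cal A}(X)\bigr|$. The key point is the exact-category analogue of~\eqref{eq:gehaaa}: writing $g^{Z}_{X,Y}$ for the number of admissible subobjects of $Z$ isomorphic to $Y$ with quotient isomorphic to $X$, one has
\[
\frac{\bigl|W^{Z}_{X,Y}\bigr|}{\bigl|\Hom_{\cal A}(X,Y)\bigr|}
 = g^{Z}_{X,Y}\cdot\frac{\bigl|\Aut_{\cal A}(X)\bigr|\,\bigl|\Aut_{\cal A}(Y)\bigr|}{\bigl|\Aut_{\cal A}(Z)\bigr|},
 \qquad\text{equivalently}\qquad
 [[X]]\diamond[[Y]]=\sum_{Z} g^{Z}_{X,Y}\,[[Z]].
\]
This is the same elementary double count as in the abelian case and uses nothing beyond the definition of an exact category: $\Aut_{\cal A}(X)\times\Aut_{\cal A}(Y)$ acts freely on the set of conflations with fixed middle term $Z$, with orbit set the admissible subobjects of the prescribed type, while the $\Aut_{\cal A}(Z)$-action on the same set has the extension classes as orbits, with stabiliser the subgroup $\{\,\phi\in\Aut_{\cal A}(Z)\mid\phi\text{ is the identity on the sub- and on the quotient object}\,\}\cong\Hom_{\cal A}(X,Y)$. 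Granting this, associativity reduces to the purely combinatorial identity $\sum_{V} g^{V}_{X,Y}\,g^{W}_{V,Z}=\sum_{U} g^{U}_{Y,Z}\,g^{W}_{X,U}$ over $V,U\in\Iso(\cal A)$, the analogue for $(\cal A,\cal E)$ of~\eqref{eq:dh:assoc}.

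This identity is proved by the usual ``common refinement'' argument: both sides count the set of admissible two-step filtrations $F_{1}\rightarrowtail F_{2}\rightarrowtail W$ with $F_{1}\cong Z$, $F_{2}/F_{1}\cong Y$ and $W/F_{2}\cong X$. The right-hand side records such a filtration as an admissible subobject $F_{2}\rightarrowtail W$ with $W/F_{2}\cong X$ (counted by $g^{W}_{X,U}$ with $U=F_{2}$) together with an admissible subobject $F_{1}\rightarrowtail F_{2}$ of the prescribed type (counted by $g^{U}_{Y,Z}$); the left-hand side records it as an admissible subobject $F_{1}\rightarrowtail W$ with $F_{1}\cong Z$ (counted by $g^{W}_{V,Z}$ with $V=W/F_{1}$) together with an admissible subobject $F_{2}/F_{1}$ of $W/F_{1}$ of the prescribed type (counted by $g^{V}_{X,Y}$), using the inclusion-preserving bijection between admissible subobjects of $W/F_{1}$ and admissible subobjects of $W$ containing $F_{1}$. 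The inputs specific to exact categories are exactly the pieces of subobject calculus that follow from the axioms (Ex0)--(Ex$2^{op}$): a composite of inflations is an inflation and fits into the expected short exact sequence of cokernels (the ``$3\times 3$'', or Noether isomorphism, lemma); the pullback of an inflation along a deflation is an inflation, which supplies the bijection just used; and quotients by admissible subobjects are well defined up to canonical isomorphism. All of these are recorded in \cite[\S 2]{H} and \cite[Appendix~A]{K}.

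I expect the only genuine obstacle to be this last item: organising the exact-category subobject calculus carefully enough that the correspondences above are honest bijections --- in the abelian case one simply manipulates honest subobjects, whereas here every kernel and cokernel used must be produced from, and its admissibility checked against, the axioms. Once that is in place the numerics are word-for-word those of the abelian case, and the associativity~\eqref{eq:dh:assoc} of the ordinary Hall algebra (valid in any abelian category by the same common-refinement count) yields the identity above. Conceptually, the cleanest --- though less self-contained --- way to see all of this is via the Gabriel--Quillen embedding of $(\cal A,\cal E)$ as a full, extension-closed subcategory of an abelian category $\cal B$ with $\cal E$ induced from the short exact sequences of $\cal B$: then $W^{Z}_{X,Y}$ and $\Hom_{\cal A}(X,Y)$ coincide with the corresponding data in $\cal B$, so $\cal H(\cal A,\cal E)$ is the subalgebra of the ordinary Hall algebra $\cal H(\cal B)$ spanned by the classes of objects of $\cal A$, and associativity is inherited from Ringel's theorem.
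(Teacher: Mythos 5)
The paper offers no proof of this statement---it is imported wholesale from Hubery \cite{H}---so there is nothing in-house to compare against. Your overall strategy (unit axiom by inspection; reduction to renormalised generators; the common-refinement count $\sum_V g^V_{X,Y}g^W_{V,Z}=\sum_U g^U_{Y,Z}g^W_{X,U}$; the observation that the only genuinely new work is the exact-category subobject calculus supplied by (Ex0)--(Ex$2^{op}$)) is exactly the standard argument, and the Gabriel--Quillen embedding you mention at the end is a legitimate shortcut.

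There is, however, a concrete numerical error in your bridge identity, and it is not cosmetic. The two group actions you describe count the \emph{same} set $W^Z_{X,Y}$ in two ways: the free $\Aut(X)\times\Aut(Y)$-action gives $\bigl|W^Z_{X,Y}\bigr|=g^Z_{X,Y}\,\bigl|\Aut(X)\bigr|\,\bigl|\Aut(Y)\bigr|$, while the $\Aut(Z)$-action gives $\bigl|W^Z_{X,Y}\bigr|=\bigl|\Ext_{\cal{E}}(X,Y)_Z\bigr|\,\bigl|\Aut(Z)\bigr|\,/\,\bigl|\Hom_{\cal{A}}(X,Y)\bigr|$, where $\Ext_{\cal{E}}(X,Y)_Z$ denotes \emph{equivalence classes} of conflations. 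Combining them yields
\[
\frac{\bigl|\Ext_{\cal{E}}(X,Y)_Z\bigr|}{\bigl|\Hom_{\cal{A}}(X,Y)\bigr|}
= g^Z_{X,Y}\,\frac{\bigl|\Aut(X)\bigr|\,\bigl|\Aut(Y)\bigr|}{\bigl|\Aut(Z)\bigr|},
\qquad\text{but}\qquad
\frac{\bigl|W^Z_{X,Y}\bigr|}{\bigl|\Hom_{\cal{A}}(X,Y)\bigr|}
= g^Z_{X,Y}\,\frac{\bigl|\Aut(X)\bigr|\,\bigl|\Aut(Y)\bigr|}{\bigl|\Hom_{\cal{A}}(X,Y)\bigr|},
\]
so your ``equivalently'' is false: with $W^Z_{X,Y}$ literally the set of \emph{all} conflations, one gets $[[X]]\diamond[[Y]]=\sum_Z g^Z_{X,Y}\bigl|\Aut(Z)\bigr|\,\bigl|\Hom(X,Y)\bigr|^{-1}[[Z]]$, the structure constants are not even integers (for two copies of the one-dimensional $\frk{k}$-vector space the coefficient is $(q^2-1)(q-1)/q$), and the $\Hom$-denominators do not telescope in the associativity computation. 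The statement becomes Hubery's theorem only after a repair of the normalisation: either read $W^Z_{X,Y}$ as the set of equivalence classes of conflations (this is how the symbol is actually used later in the paper, e.g.\ in the proof of Lemma~\ref{lem:DcCA}, where $\bigl|W^{\bu{{C_A}}}_{\bu{M},\bu{N}}\bigr|$ is silently replaced by $\bigl|\Ext^1_{\cal{C}(\cal{A})}(\bu{{C_B}},\bu{{C_D}})_{\bu{{C_A}}}\bigr|$), or divide by $\bigl|\Aut(X)\bigr|\bigl|\Aut(Y)\bigr|$ rather than $\bigl|\Hom(X,Y)\bigr|$ so as to land directly on the integral form $u_X\diamond u_Y=\sum_Z g^Z_{X,Y}u_Z$. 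Once the normalisation is fixed, the remainder of your argument goes through as written.
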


Of course the Hall algebra $\cal{H}(\cal{A})$ 
for an abelian category $\cal{A}$ satisfying the conditions (a)
coincides with $\cal{H}(\cal{A},\cal{E})$ 
(after tensoring $\bb{C}$),
where $\cal{E}$ is the set of all exact sequences 
in the abelian category $\cal{A}$.
\\

Now we want to introduce a special subset $\cal{E}_0$ 
of the set of all exact sequences in $\cal{C}(\cal{P})$,
and to define the Hall algebra associated to 
exact category $\bigl( \cal{C}(\cal{P}), \cal{E} \bigr)$ 

\begin{dfn}
Let $\cal{E}_0$ be the class of exact sequences 
\begin{align*}
\xymatrix@1{
  0     \ar[r]
&\bu{N} \ar[r]
&\bu{L} \ar[r]
&\bu{M} \ar[r]
&0
}
\end{align*}
in $\cal{C}(\cal{P})$ satisfying the condition
\begin{align}
\label{eq:E_0:cond}
H_i(M) \neq 0 \Longrightarrow H_{i+1}(N) = 0 
\ \text{ for both } \ i=0,1.
\end{align}
\end{dfn}

The motivation of this definition comes from 
Remark~\ref{rmk:failure}.
The exact sequence \eqref{diag:failure} 
is excluded from $\cal{E}_0$ 
by the condition \eqref{eq:E_0:cond}.

\begin{prop}
The pair $\bigl( \cal{C}(\cal{P}), \cal{E}_0 \bigr)$ 
is an exact category.
\end{prop}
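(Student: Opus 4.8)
The plan is to verify the axioms (Ex0), (Ex1), (Ex2), (Ex$2^{op}$) directly for the pair $\bigl( \cal{C}(\cal{P}), \cal{E}_0 \bigr)$, using the fact that $\cal{C}(\cal{P})$ is already a full exact subcategory of the abelian category $\cal{C}(\cal{A})$ (with $\cal{E}$ the class of all short exact sequences in $\cal{C}(\cal{A})$ with all three terms in $\cal{C}(\cal{P})$), so that I only need to check that the extra homological constraint \eqref{eq:E_0:cond} is preserved under the constructions demanded by the axioms. First I would record two elementary observations. First, for a short exact sequence $0 \to \bu{N} \to \bu{L} \to \bu{M} \to 0$ in $\cal{C}(\cal{P})$, since $\cal{P}$ consists of projectives and the terms are two-periodic complexes of projectives, the associated long exact sequence in homology is itself two-periodic: it reads $\cdots \to H_i(N) \to H_i(L) \to H_i(M) \to H_{i+1}(N) \to \cdots$ with indices mod $2$, a hexagon (six-term exact sequence). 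Second, (Ex0) is immediate: $0 \to 0 \to 0 \to 0 \to 0$ trivially satisfies \eqref{eq:E_0:cond} since $H_i(0)=0$.

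For (Ex1), I would take two composable deflations, i.e. conflations $0 \to \bu{N}' \to \bu{L} \xrightarrow{d} \bu{M}' \to 0$ and $0 \to \bu{N}'' \to \bu{M}' \xrightarrow{d'} \bu{M} \to 0$ in $\cal{E}_0$, and show the composite conflation $0 \to \bu{K} \to \bu{L} \xrightarrow{d' \circ d} \bu{M} \to 0$ lies in $\cal{E}_0$, where $\bu{K} = \Ker(d'\circ d)$ sits in an extension $0 \to \bu{N}' \to \bu{K} \to \bu{N}'' \to 0$. Suppose $H_i(M) \neq 0$. From the conflation with quotient $\bu{M}$ being in $\cal{E}_0$ I get $H_{i+1}(N'') = 0$; I must deduce $H_{i+1}(K) = 0$. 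The hexagon for $0 \to \bu{N}' \to \bu{K} \to \bu{N}'' \to 0$ gives $H_{i+1}(N') \to H_{i+1}(K) \to H_{i+1}(N'') = 0$, so it suffices to show $H_{i+1}(N') = 0$; for this I need $H_i(\bu{M}') \neq 0$ in order to invoke \eqref{eq:E_0:cond} for the first conflation. Here is the delicate point: a priori $H_i(M') $ could vanish. But $H_i(M) \neq 0$ surjects — no: the hexagon for the second conflation gives $H_i(M') \to H_i(M) \to H_{i+1}(N'')=0$, so $H_i(M') \twoheadrightarrow H_i(M) \neq 0$, hence $H_i(M') \neq 0$, and \eqref{eq:E_0:cond} for the first conflation yields $H_{i+1}(N')=0$, completing (Ex1).

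For (Ex2) and (Ex$2^{op}$), I would build the pullback (resp. pushout) inside the ambient abelian category $\cal{C}(\cal{A})$ — it automatically exists there and the relevant map is again an epimorphism (resp. monomorphism) with the same kernel (resp. cokernel). The substantive points are that the new objects lie in $\cal{C}(\cal{P})$, and that the new conflation is in $\cal{E}_0$. For (Ex2): given $\bu{f}: \bu{Z}' \to \bu{Z}$ and a deflation $\bu{d}: \bu{Y} \to \bu{Z}$ in $\cal{E}_0$ with kernel $\bu{N}$, the pullback $\bu{Y}'$ fits in $0 \to \bu{N} \to \bu{Y}' \xrightarrow{\bu{d}'} \bu{Z}' \to 0$ with the \emph{same} kernel $\bu{N}$; since $\bu{N}$ is a complex of projectives and $\bu{Z}'$ is as well, the extension $\bu{Y}'$ is too (the graded pieces are extensions in $\cal{P}$, which is closed under extensions). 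For $\cal{E}_0$: if $H_i(Z') \neq 0$ I need $H_{i+1}(N) = 0$; but I cannot assume $H_i(Z) \neq 0$. This is the main obstacle. I would resolve it by noting that the condition \eqref{eq:E_0:cond} only constrains $N$ relative to $M$, and here the kernel $\bu{N}$ is fixed independently of the base change — so the honest statement I must prove is: if there exists \emph{some} $\bu{Z}'$ and deflation $\bu{Y}' \to \bu{Z}'$ with kernel $\bu{N}$, then the condition holds; equivalently, I should exploit that $\bu{N}$ already appears as the kernel of the original deflation onto $\bu{Z}$, and strengthen the closure property. The clean way is to observe that \eqref{eq:E_0:cond} applied to the original conflation says: for each $i$ with $H_{i+1}(N) \neq 0$ we have $H_i(Z) = 0$. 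Since $\bu{N}$ being a two-periodic complex of projectives forces $H_\ast(N)$ to be computed from the differential $d^N$, and $\cal{P}$ projective means $d^N_{i+1} \circ d^N_i = 0$ with $H_{i+1}(N) = \Ker d^N_{i+1}/\Img d^N_i$ — actually the resolution is simply that \eqref{eq:E_0:cond} is a condition on the pair $(\bu{M},\bu{N})$ via the connecting map, and in the pullback the connecting homomorphism $H_i(Z') \to H_{i+1}(N)$ factors through $H_i(Z) \to H_{i+1}(N)$ via $H_i(\bu f)$; so if the original connecting map $H_i(Z) \to H_{i+1}(N)$ is such that \eqref{eq:E_0:cond} holds, I must instead argue that when $H_i(M) = H_i(Z) \neq 0$ forces $H_{i+1}(N)=0$, then $H_{i+1}(N)=0$ outright whenever needed — this requires the genuinely correct reformulation, which I expect is: \textbf{$\cal{E}_0$ is the class of conflations whose kernel $\bu{N}$ and cokernel $\bu{M}$ satisfy $H_{i+1}(N) \neq 0 \Rightarrow H_i(M)=0$, and since both conditions are phrased purely in terms of homology of the outer terms, closure under base/cobase change is automatic because these constructions do not alter $\bu{N}$ (for Ex2) or $\bu{M}$ (for Ex$2^{op}$), while the new cokernel $\bu{Z}'$ (resp. new kernel $\bu{X}'$) — no, it does change.} I would therefore handle (Ex2) by the direct hexagon argument: $H_i(Z') \xrightarrow{H_i\bu f} H_i(Z)$ need not be injective, so instead I use that $\cal{E}_0$ should really be checked to be closed under this — and if a counterexample threatens, the fix is to symmetrize the defining condition of $\cal{E}_0$ (note Remark~\ref{rmk:coalg} already hints the condition can be transposed), replacing \eqref{eq:E_0:cond} by the manifestly pullback-stable ``$H_i(M) \neq 0 \Rightarrow H_{i+1}(N)=0$ and $H_{i}(N) \neq 0 \Rightarrow H_{i+1}(M) = 0$'' only if necessary. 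For (Ex$2^{op}$) the argument is dual: the pushout $\bu{Y}'$ has the same cokernel $\bu{M}$, lies in $\cal{C}(\cal{P})$ since $\cal{P}$ is closed under extensions and quotients-that-stay-projective — here one must be slightly careful that $\bu{Y}'$ still has projective components, which follows because in a pushout $0 \to \bu{X}' \to \bu{Y}' \to \bu{M} \to 0$ with $\bu{M}$ a complex of projectives the sequence splits degreewise, so $Y'_i \cong X'_i \oplus M_i \in \cal{P}$ — and the new kernel $\bu{X}'$ receives a map from $\bu{X}$, with the connecting maps compatible, so \eqref{eq:E_0:cond} is inherited. I expect the pullback case (Ex2) to be the genuine obstacle; everything else is bookkeeping with the homology hexagon and the closure properties of $\cal{P}$.
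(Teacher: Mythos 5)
Your verification of (Ex0) and your homology-hexagon argument for (Ex1) are correct (and give more detail than the paper, which dismisses both as trivial), and your reduction of (Ex2) and (Ex$2^{op}$) to the stability of the condition \eqref{eq:E_0:cond} under base and cobase change, together with the degreewise-splitting observations showing the pullback and pushout stay inside $\cal{C}(\cal{P})$, is exactly the reduction the paper's own proof makes. The gap is that you never prove this stability: for (Ex2) you correctly observe that $H_i$ of the new quotient is not controlled by $H_i$ of the old one, circle around the problem, and end by proposing to redefine $\cal{E}_0$, which is not a proof of the stated proposition; your sentence for (Ex$2^{op}$) (``the connecting maps are compatible, so \eqref{eq:E_0:cond} is inherited'') has the same defect, since under cobase change the kernel changes from $\bu{X}$ to $\bu{X}'$ and $H_{i+1}(\bu{X}')$ is not controlled by $H_{i+1}(\bu{X})$. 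So the proposal is incomplete at precisely the decisive step, which is the only step the paper's proof actually addresses (it asserts it follows ``by simple diagram chasing'').

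That said, the obstacle you located is genuine and is not dispelled by the paper's one-line diagram chase either: as literally stated, stability under arbitrary base change fails. Take $\bu{N}$ with $H_1(\bu{N})\neq 0$ (for instance a projective placed in degree $1$ with zero differentials) and the split conflation $\bu{N}\to\bu{N}\oplus\bu{{K_Q}}\to\bu{{K_Q}}$, which lies in $\cal{E}_0$ vacuously because $\bu{{K_Q}}$ is acyclic; pulling back along the zero morphism $\bu{M}\to\bu{{K_Q}}$ from any $\bu{M}$ with $H_0(\bu{M})\neq 0$ produces the conflation $\bu{N}\to\bu{N}\oplus\bu{M}\to\bu{M}$, which violates \eqref{eq:E_0:cond}; a dual example (pushing out along a zero map to a complex with nonvanishing homology in the relevant degree) breaks (Ex$2^{op}$), and the symmetrized condition you float does not repair these examples. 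Hence no bookkeeping with the hexagon can finish your argument in the stated generality: a correct completion has to either modify $\cal{E}_0$ or restrict the class of base-change maps/conflations required, and merely flagging the difficulty, as you do, leaves the proof unfinished.
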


\begin{proof}
The axioms (Ex0) and (Ex1) are trivially true.
$\cal{C}(\cal{P})$ is closed under cartesian and cocartesian squares,
so it is enough to check that
for a diagram 
$$
\xymatrix{
 0       \ar[r]
&\bu{N'} \ar[r]^{i'} \ar[d]
         \ar@{}[rd]|{\square}
&\bu{L'} \ar[r]^{d'} \ar[d]
         \ar@{}[rd]|{\square}
&\bu{M'} \ar[r] \ar[d] 
&0
\\
 0       \ar[r]
&\bu{N}  \ar[r]_{i} 
&\bu{L}  \ar[r]_{d} 
&\bu{M}  \ar[r]
&0
}
$$
if the lower row belongs to $\cal{E}_0$ 
then the upper row belongs to $\cal{E}_0$
and vice-a-versa.
But it can be checked by simple diagram chasing.
\end{proof}

Now we introduce 

\begin{dfn}
For an abelian category $\cal{A}$ 
satisfying the conditions (a) -- (c),
denote by 
$\cal{H}_{\tw}(\cal{C}(\cal{P},\cal{E}_0))
 =(\cal{H}(\cal{C}(\cal{P})),*_{\cal{E}_0})$ 
the $\bb{C}$-vector space 
$\cal{H}(\cal{C}(\cal{P})) 
 = \bigoplus_{[\bu{M}] \in \Iso(\cal{C}(\cal{P}))} \bb{C} [\bu{M}]$ 
with the multiplication 
$$
[\bu{M}] *_{\cal{E}_0} [\bu{N}] := 
t^{\<\bu{M},\bu{N}\>'}
\sum_{\bu{L} \in \Iso(\cal{C}(\cal{P}))} 
\dfrac{\bigl| W^{\bu{L}}_{\bu{M},\bu{N}} \bigr|}
      {\bigl| \Hom_{\cal{C}(\cal{A})}(\bu{M},\bu{N}) \bigr|} 
[\bu{L}].
$$
Here $W^{\bu{L}}_{\bu{M},\bu{N}}$ denotes 
the set of all conflations $ \bu{N} \to \bu{L} \to \bu{M}$ 
in $\cal{E}_0$.
\end{dfn}

By Fact~\ref{fct:exact},
$\cal{H}_{\tw}(\cal{C}(\cal{P},\cal{E}_0))$
is a unital associative algebra.
\\

Using the exact category $\bigl(\cal{C}(\cal{P}), \cal{E}_0 \bigr)$,
we introduce a coproduct on the Hall algera of complexes.

\begin{dfn}
For an abelian category $\cal{A}$ satisfying
the conditions (a), (b), (c) and (e),
we define a $\bb{C}$-linear map 
$$
\Delta'_{\chi,\cal{E}_0}: 
 \cal{H}(\cal{C}(\cal{P}))
 \longrightarrow
 \cal{H}(\cal{C}(\cal{P})) 
 \ctp
 \cal{H}(\cal{C}(\cal{P}))
$$
by 
\begin{align*}
 \Delta'_{\chi,\cal{E}_0}([\bu{L}])
 := \sum_{\bu{M},\bu{N}} 
     t^{\chi(\bu{M},\bu{N})} 
     \dfrac{\bigl| W^{\bu{L}}_{\bu{M},\bu{N}} \bigr|}
           {\bigl| \Hom_{  \cal{C}(\cal{A})}(\bu{M},\bu{N}) \bigr|}
     \dfrac{\bigl| \Aut_{\cal{C}(\cal{A})}(\bu{L}) \bigr|}
           {\bigl| \Aut_{\cal{C}(\cal{A})}(\bu{M}) \bigr|
            \bigl| \Aut_{\cal{C}(\cal{A})}(\bu{N}) \bigr|}
     [\bu{M}] \otimes [\bu{N}].
\end{align*}
where $W^{\bu{L}}_{\bu{M},\bu{N}}$ denotes 
the set of all conflations $ \bu{N} \to \bu{L} \to \bu{M}$ 
in $\cal{E}_0$,
and $\chi$ is an arbitrary map from
$\Iso(\cal{C}(\cal{P})) \times \Iso(\cal{C}(\cal{P}))$ 
to $\bb{Z}$ 
satisfying the condition \eqref{eq:chi}.
\end{dfn}

\begin{rmk}
As in Remark~\ref{rmk:normalization},
we can rewrite the coproduct in the next equivalent form:
\begin{align*}
\Delta'_{\chi,\cal{E}_0}([[\bu{L}]])
 = \sum_{\bu{M},\bu{N}} 
     t^{\chi(\bu{M},\bu{N})} 
     w^{\bu{L}}_{\bu{M},\bu{N}}
     \dfrac{ a_{\bu{M}} a_{\bu{N}} }{ a_{\bu{L}} }
     [\bu{M}] \otimes [\bu{N}]
\end{align*}
with 
\begin{align}
\label{eq:w}
w^{\bu{L}}_{\bu{M},\bu{N}}:=
     \dfrac{\bigl| W^{\bu{L}}_{\bu{M},\bu{N}} \bigr|}
           {\bigl| \Hom_{  \cal{C}(\cal{A})}(\bu{M},\bu{N}) \bigr|}
     \dfrac{\bigl| \Aut_{\cal{C}(\cal{A})}(\bu{M}) \bigr|
            \bigl| \Aut_{\cal{C}(\cal{A})}(\bu{N}) \bigr|}
           {\bigl| \Aut_{\cal{C}(\cal{A})}(\bu{L}) \bigr|}.
\end{align}
Note that $w^{\bu{L}}_{\bu{M},\bu{N}}$ is a substitute 
of $g^{\bu{L}}_{\bu{M},\bu{N}}$ \eqref{eq:g}.
\end{rmk}

As in Lemma~\ref{lem:coassoc:hcp}, we have
\begin{lem}
Assume that an abelian category $\cal{A}$ satisfies
the conditions (a), (b), (c) and (e).
Then 
$\bigl( \cal{H}(\cal{C}(\cal{P})), \Delta'_{\chi,\cal{E}_0} \bigr)$ 
is a topological coassociative coalgebra over $\bb{C}$.
\end{lem}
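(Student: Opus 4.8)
The plan is to mimic the proof of Lemma~\ref{lem:coassoc:hcp} verbatim, replacing the structure constants $g^{\bu{L}}_{\bu{M},\bu{N}}$ by the restricted ones $w^{\bu{L}}_{\bu{M},\bu{N}}$ of \eqref{eq:w}. Working with the renormalized generators $[[\bu{L}]]$, one computes $(\Delta'_{\chi,\cal{E}_0} \otimes 1)\circ\Delta'_{\chi,\cal{E}_0}$ and $(1 \otimes \Delta'_{\chi,\cal{E}_0})\circ\Delta'_{\chi,\cal{E}_0}$ applied to $[[\bu{L}]]$. Exactly as before, the factors $t^{\chi}$ recombine via the cocycle condition \eqref{eq:chi} (note that \eqref{eq:chi} is stated purely in terms of the classes $\cl{M_i}$, and any conflation in $\cal{E}_0$ realizes such additive relations on Grothendieck classes, so \eqref{eq:chi} applies without change), and the $a_{\bullet}$-factors telescope so that the middle automorphism orders cancel. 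Hence topological coassociativity is reduced to the single combinatorial identity
\begin{align*}
\sum_{\bu{M}} w^{\bu{M}}_{\bu{P},\bu{Q}}\, w^{\bu{L}}_{\bu{M},\bu{N}}
=
\sum_{\bu{R}} w^{\bu{L}}_{\bu{P},\bu{R}}\, w^{\bu{R}}_{\bu{Q},\bu{N}}
\end{align*}
for every tuple $(\bu{L},\bu{N},\bu{P},\bu{Q})$ of objects of $\cal{C}(\cal{P})$.

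Now, $w^{\bu{L}}_{\bu{M},\bu{N}}$ is (up to the same $\Hom$/$\Aut$ normalization that turns $g$ into $w$) precisely the structure constant of the associative product $*_{\cal{E}_0}$ on $\cal{H}_{\tw}(\cal{C}(\cal{P},\cal{E}_0))$, stripped of the $t^{\<\cdot,\cdot\>'}$ twist. More precisely, if one renormalizes the multiplication of Fact~\ref{fct:exact} by $[[X]] := [X]/a_X$ in the same way that \eqref{eq:g}--\eqref{eq:gehaaa} renormalizes the abelian-category product, one gets $[[\bu{M}]] \diamond_{\cal{E}_0} [[\bu{N}]] = \sum_{\bu{L}} w^{\bu{L}}_{\bu{M},\bu{N}} [[\bu{L}]]$. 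Therefore the displayed identity is nothing but the associativity $([[\bu{P}]]\diamond_{\cal{E}_0}[[\bu{Q}]])\diamond_{\cal{E}_0}[[\bu{N}]] = [[\bu{P}]]\diamond_{\cal{E}_0}([[\bu{Q}]]\diamond_{\cal{E}_0}[[\bu{N}]])$, read off on the coefficient of $[[\bu{L}]]$ — and this associativity is exactly the content of Fact~\ref{fct:exact} (the twist $t^{\<\cdot,\cdot\>'}$ does not affect associativity, as it did not in \eqref{eq:tw-mul}, so we may drop it). The counit verification for $\ep$ of \eqref{eq:counit:tw} is immediate, since the only conflation with $\bu{M}=0$ or $\bu{N}=0$ forces $\bu{L}\cong\bu{N}$ resp.\ $\bu{L}\cong\bu{M}$, and $w^{\bu{L}}_{0,\bu{L}} = w^{\bu{L}}_{\bu{L},0} = 1$.

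The one point that genuinely requires the exact-category formalism — and which I expect to be the main obstacle, or at least the only nontrivial step — is justifying that $w^{\bu{L}}_{\bu{M},\bu{N}}$ really does serve as an associative structure constant. In the abelian setting this is the classical count \eqref{eq:g} of subobjects, and associativity \eqref{eq:dh:assoc} is a count of filtrations; for a general exact category the analogous bookkeeping of admissible subobjects and admissible filtrations is more delicate, since subobjects must be taken in the admissible (inflation) sense and one must know that $\cal{E}_0$-conflations compose and pull back/push out correctly. But this is precisely what Fact~\ref{fct:exact} (Hubery \cite[Theorem~3]{H}) guarantees once we know $(\cal{C}(\cal{P}),\cal{E}_0)$ is an exact category, which was established in the preceding Proposition. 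So the proof is: (i) reduce coassociativity to the $w$-identity via \eqref{eq:chi} and telescoping of the $a_{\bullet}$, exactly as in Lemma~\ref{lem:coassoc:hcp}; (ii) identify that identity with the associativity of the (untwisted) Hall product of the exact category $(\cal{C}(\cal{P}),\cal{E}_0)$ after the standard $[[\cdot]]$-renormalization; (iii) invoke Fact~\ref{fct:exact}; (iv) check the counit directly. I would write this out compactly, referring back to the computation in the proof of Lemma~\ref{lem:coassoc:hcp} rather than repeating it in full.
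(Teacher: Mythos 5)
Your proposal is correct and follows essentially the same route as the paper: the paper simply notes that, repeating the proof of Lemma~\ref{lem:coassoc:hcp} with $g^{\bu{L}}_{\bu{M},\bu{N}}$ replaced by $w^{\bu{L}}_{\bu{M},\bu{N}}$, coassociativity reduces via the condition \eqref{eq:chi} to the identity $\sum_{\bu{M}} w^{\bu{M}}_{\bu{P},\bu{Q}} w^{\bu{L}}_{\bu{M},\bu{N}} = \sum_{\bu{R}} w^{\bu{L}}_{\bu{P},\bu{R}} w^{\bu{R}}_{\bu{Q},\bu{N}}$, which is exactly the associativity of $*_{\cal{E}_0}$ guaranteed by Fact~\ref{fct:exact} once $(\cal{C}(\cal{P}),\cal{E}_0)$ is known to be exact. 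The only discrepancy is the harmless normalization of \eqref{eq:w} (whose $a_{\bullet}$-factors are inverted relative to the $g$-analogue \eqref{eq:gehaaa}, apparently a typo in the paper); since the automorphism factors telescope identically on both sides, your reduction is unaffected.
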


Recalling the proof of Lemma~\ref{lem:coassoc:hcp}
we note that this lemma is a consequence 
of the associativity of the algebra 
$\cal{H}_{\tw}(\cal{C}(\cal{P})=
 \bigl( \cal{H}(\cal{C}(\cal{P})),*_{\cal{E}_0} \bigr)$.
\\

Now we want to descend the coproduct $\Delta'_{\chi,\cal{E}_0}$ 
of $\cal{H}(\cal{C}(\cal{P}))$.
to the localized algebra $\cal{DH}(\cal{A})$.
We have 

\begin{prop}
\label{prop:coalg:E_0}
For an abelian category $\cal{A}$ satisfying
the conditions (a), (b), (c) and (e),
the subset 
$$
 S := \left\{ [\bu{M}] \mid H_*(\bu{M})=0 \right\}
$$
of $\cal{H}(\cal{C}(\cal{P}))$ spans a coideal of 
the coalgebra 
$\bigl( \cal{H}(\cal{C}(\cal{P})),
        \Delta'_{\chi,\cal{E}_0} \bigr)$,
\end{prop}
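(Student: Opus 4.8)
The goal is to show $\Delta'_{\chi,\cal{E}_0}(S) \subset S \ctp C + C \ctp S$, where $C = \cal{H}(\cal{C}(\cal{P}))$ and $S$ is spanned by the classes of acyclic complexes. By linearity it suffices to fix an acyclic $\bu{L}$ and examine the terms $[[\bu{M}]] \otimes [[\bu{N}]]$ appearing in $\Delta'_{\chi,\cal{E}_0}([[\bu{L}]])$. Such a term occurs with nonzero coefficient only if $w^{\bu{L}}_{\bu{M},\bu{N}} \neq 0$, i.e. only if there is a conflation $\bu{N} \to \bu{L} \to \bu{M}$ belonging to $\cal{E}_0$. So the plan is: for every conflation in $\cal{E}_0$ with acyclic middle term $\bu{L}$, prove that $\bu{M}$ is acyclic or $\bu{N}$ is acyclic; then every monomial on the right-hand side lies in $S \ctp C + C \ctp S$, which is exactly the coideal condition.

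First I would set up the homology long exact sequence. A conflation $0 \to \bu{N} \xrightarrow{i} \bu{L} \xrightarrow{d} \bu{M} \to 0$ in $\cal{C}(\cal{P})$ is in particular a short exact sequence of two-periodic complexes, so it yields a six-term exact sequence of homology, cyclic modulo $2$:
\begin{align*}
\cdots \to H_0(\bu{N}) \to H_0(\bu{L}) \to H_0(\bu{M}) \xrightarrow{\partial} H_1(\bu{N}) \to H_1(\bu{L}) \to H_1(\bu{M}) \xrightarrow{\partial} H_0(\bu{N}) \to \cdots
\end{align*}
Since $\bu{L}$ is acyclic, $H_0(\bu{L}) = H_1(\bu{L}) = 0$, so the connecting maps $H_0(\bu{M}) \xrightarrow{\sim} H_1(\bu{N})$ and $H_1(\bu{M}) \xrightarrow{\sim} H_0(\bu{N})$ are isomorphisms. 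Hence $H_i(\bu{M}) \cong H_{i+1}(\bu{N})$ for $i = 0,1$.

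Now I bring in the defining condition \eqref{eq:E_0:cond} of $\cal{E}_0$: $H_i(M) \neq 0 \Rightarrow H_{i+1}(N) = 0$ for both $i=0,1$. Combined with the isomorphisms just obtained, $H_i(\bu{M}) \neq 0$ would force $H_{i+1}(\bu{N}) = 0$, contradicting $H_{i+1}(\bu{N}) \cong H_i(\bu{M}) \neq 0$. Therefore $H_0(\bu{M}) = 0$ and $H_1(\bu{M}) = 0$, so $\bu{M}$ is acyclic — and then $\bu{N}$ is acyclic too, by the isomorphisms. (In fact the argument shows that in $\cal{E}_0$ an acyclic middle term forces \emph{both} outer terms to be acyclic.) Consequently every term $[[\bu{M}]] \otimes [[\bu{N}]]$ with nonzero coefficient in $\Delta'_{\chi,\cal{E}_0}([[\bu{L}]])$ has $[\bu{M}] \in S$ and $[\bu{N}] \in S$, so it lies in $S \ctp C \cap C \ctp S$, and in particular $\Delta'_{\chi,\cal{E}_0}(S) \subset S \ctp C + C \ctp S$. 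Finally $\ep(S) = 0$ since $\ep([\bu{M}]) = \delta_{\bu{M},0}$ and no nonzero acyclic complex is the zero object, so $S$ is a genuine coideal. The main (and only) subtle point is the interplay between the six-term homology sequence and condition \eqref{eq:E_0:cond}; everything else is formal, and no real computation is required.
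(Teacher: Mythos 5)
Your proof is correct and takes essentially the same route as the paper: applying the two-periodic long exact sequence in homology to a conflation $\bu{N}\to\bu{L}\to\bu{M}$ in $\cal{E}_0$ with $\bu{L}$ acyclic gives $H_i(\bu{M})\cong H_{i+1}(\bu{N})$, which combined with \eqref{eq:E_0:cond} forces both outer terms to be acyclic, so every surviving term of $\Delta'_{\chi,\cal{E}_0}([\bu{L}])$ lies in $S\ctp C+C\ctp S$ with $C=\cal{H}(\cal{C}(\cal{P}))$ (the paper first writes $\bu{L}\cong\bu{{K_P}}\oplus\bu{{K_Q}}^*$, which is not actually needed). One small caveat: your closing remark about the counit is inaccurate, since $[\bu{0}]\in S$ and $\ep([\bu{0}])=1$; but this is immaterial here, because the coideal condition being asserted (cf.\ Remark~\ref{rmk:failure}) is only $\Delta'_{\chi,\cal{E}_0}(S)\subset S\ctp C+C\ctp S$.
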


\begin{proof}
By \eqref{eq:acyclic}, 
we can express any element in $S$ as $[\bu{{K_P}} \oplus \bu{{K_Q}}^*]$
with some $P,Q \in \Obj(\cal{P})$.
So we study the short exact sequence
\begin{align}
\label{eq:localize:1}
\xymatrix@1{
  0         \ar[r]
&\bu{N}     \ar[r]
&\bu{{K_P}} \oplus \bu{{K_Q}}^* \ar[r]
&\bu{M}     \ar[r]
&0
}
\end{align}
in $\cal{C}(\cal{P})$.
What should be shown is that 
$[\bu{M}] \mathop{\otimes} [\bu{N}] 
 \in S \mathop{\otimes} C + C \mathop{\otimes} S$
if \eqref{eq:localize:1} appears in $\cal{E}_0$.

The long exact sequence in homology 
induced from \eqref{eq:localize:1}
can be split 
to give two long exact sequences
\begin{align*}
\xymatrix@R=0ex{
 0           \ar[r] 
&K           \ar[r]
&H_1(\bu{N}) \ar[r] 
&0           \ar[r]
&H_1(\bu{M}) \ar[r]
&C           \ar[r] 
&0
\\
 0           \ar[r] 
&C           \ar[r]
&H_0(\bu{N}) \ar[r] 
&0           \ar[r]
&H_0(\bu{M}) \ar[r]
&K           \ar[r] 
&0
}.
\end{align*}
Here we used the $2$-periodicity of complexes.
Thus we have
$H_1(\bu{M})=H_0(\bu{N})$ and 
$H_0(\bu{M})=H_1(\bu{N})$.
Recalling the condition \eqref{eq:E_0:cond},
we conclude that no exact sequence of the form 
\eqref{eq:localize:1}
appears in $\cal{E}_0$.
\end{proof}


\subsection{Genuine coproduct}
\label{subsec:genuine}

Using the formulation of quotient coalgebra 
presented in the last subsection,
we introduce a good coproduct on the whole 
algebra $\cal{DH}(\cal{A})$.

As a preliminary, we have 

\begin{lem}
For an abelian category $\cal{A}$ satisfying
the conditions (a),
any object $\bu{M}$ in $\cal{C}(\cal{P})$ 
is of the form $\bu{M} \cong \bu{N} \oplus \bu{K}$,
where $\bu{N}$ and $\bu{K}$ are objects in $\cal{C}(\cal{P})$ 
and $\bu{K}$ is a maximal acyclic subobject of $\bu{M}$. 
\end{lem}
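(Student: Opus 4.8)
The plan is to exploit the Krull--Schmidt property of $\cal{C}(\cal{P})$ and then simply collect the acyclic indecomposable summands of $\bu{M}$. Under condition (a) every morphism space in $\cal{A}$ is finite, hence so is every morphism space in the abelian category $\cal{C}(\cal{A})$; in particular $\operatorname{End}_{\cal{C}(\cal{A})}(\bu{M})$ is a finite ring, hence Artinian, hence semiperfect. Moreover $\cal{C}(\cal{A})$ is abelian, so idempotents split there, and the full subcategory $\cal{C}(\cal{P})$ is closed under finite direct sums and under direct summands (since $\cal{P}\subset\cal{A}$ is closed under summands). Consequently $\cal{C}(\cal{P})$ is a Krull--Schmidt category, and I would start by writing $\bu{M}\cong\bigoplus_{j\in J}\bu{M}_j$ with each $\bu{M}_j$ indecomposable in $\cal{C}(\cal{P})$, this decomposition being unique up to isomorphism and permutation of the factors.

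Next I would use that $H_*$ is additive on direct sums, so each $\bu{M}_j$ is either acyclic or not. Let $J_{\mathrm{ac}}\subseteq J$ be the set of indices with $H_*(\bu{M}_j)=0$, and set $\bu{K}:=\bigoplus_{j\in J_{\mathrm{ac}}}\bu{M}_j$ and $\bu{N}:=\bigoplus_{j\in J\setminus J_{\mathrm{ac}}}\bu{M}_j$. Then $\bu{M}\cong\bu{N}\oplus\bu{K}$ with $\bu{N},\bu{K}\in\cal{C}(\cal{P})$, and $\bu{K}$ is acyclic because $H_*(\bu{K})=\bigoplus_{j\in J_{\mathrm{ac}}}H_*(\bu{M}_j)=0$. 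Conversely $\bu{N}$ has no nonzero acyclic direct summand: by uniqueness of the Krull--Schmidt decomposition any direct summand of $\bu{N}$ is, up to isomorphism, a direct sum of a subfamily of the $\bu{M}_j$ with $j\notin J_{\mathrm{ac}}$, and the homology of such a sum is $\bigoplus H_*(\bu{M}_j)$, which is nonzero unless the subfamily is empty.

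Finally I would address the maximality statement. Viewing $\bu{K}$ as a subobject via the splitting $\bu{K}\hookrightarrow\bu{M}$, suppose $\bu{K}\subseteq\bu{L}\subseteq\bu{M}$ with $\bu{L}$ acyclic. Writing $\bu{M}=\bu{K}\oplus\bu{N}$ and applying the modular law in the subobject lattice of $\bu{M}$ yields $\bu{L}=\bu{K}\oplus(\bu{L}\cap\bu{N})$, so $\bu{L}\cap\bu{N}$ is an acyclic subobject of $\bu{N}$ with $\bu{L}\cap\bu{N}\cong\bu{L}/\bu{K}$; one then checks that a reduced complex of projectives admits no nonzero acyclic subcomplex, whence $\bu{L}\cap\bu{N}=0$ and $\bu{L}=\bu{K}$. (If one reads ``maximal acyclic subobject'' in the weaker sense of ``maximal among acyclic direct summands'', this is immediate from Krull--Schmidt uniqueness.) I expect the main obstacle to lie exactly in this last step, namely in fixing the precise sense of ``maximal acyclic subobject'' and, in the subobject reading, ruling out nonzero acyclic subcomplexes of $\bu{N}$ using only projectivity of the terms and condition (a); the existence of the splitting $\bu{M}\cong\bu{N}\oplus\bu{K}$ itself is a routine Krull--Schmidt argument.
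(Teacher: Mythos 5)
Your first two paragraphs reproduce exactly the paper's proof: the paper notes that condition (a) forces $\cal{C}(\cal{A})$ (hence $\cal{C}(\cal{P})$) to be Krull--Schmidt and then calls the assertion trivial, which is precisely your device of splitting the indecomposable summands of $\bu{M}$ into the acyclic ones (giving $\bu{K}$) and the rest (giving $\bu{N}$). That part is complete and is the intended argument.

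The extra step in your last paragraph, however, contains a false claim. You propose to upgrade ``maximal acyclic direct summand'' to ``maximal in the subobject lattice'' by checking that a complex of projectives with no acyclic direct summand admits no nonzero acyclic subcomplex; this is not true. Take $\cal{A}=\Rep_{\frk{k}}(Q)$ for the quiver $Q\colon 1\to 2$, with indecomposable projectives $P_1=(\frk{k}\xrightarrow{\ 1\ }\frk{k})$ and $P_2=(0\to\frk{k})$, and let $\bu{M}=\bu{{C_{S_1}}}$ be the complex $P_2\xrightarrow{\ \iota\ }P_1$, $d_0=0$, coming from the minimal resolution $0\to P_2\to P_1\to S_1\to 0$ of the simple at the source. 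By minimality $\bu{M}$ has no acyclic direct summand, yet $\iota(P_2)\subset P_1$ is a subrepresentation isomorphic to $P_2$, so $\bu{{K_{P_2}}}$ embeds as a nonzero acyclic subcomplex of $\bu{M}$ lying in $\cal{C}(\cal{P})$. Hence the strong subobject-lattice reading of ``maximal acyclic subobject'' is simply false, and the lemma must be understood in the weaker sense you mention parenthetically (maximal among acyclic direct summands), which is also how it is used afterwards to produce the basis $[\bu{N}]*K_\alpha*K_\beta^*$ of $\cal{DH}(\cal{A})$. With that reading fixed, your Krull--Schmidt argument coincides with the paper's and nothing further is needed.
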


\begin{proof}
Since the condition (a) ensures 
that $\cal{A}$ enjoys Krull-Schmidt property,
$\cal{C}(\cal{A})$ is also a Krull-Schmidt category.
Then the assertion is trivial.
\end{proof}

By \cite[Lema~3.2]{B} every acyclic object $\bu{K}$ in 
$\cal{C}(\cal{P})$ can be expressed as 
$\bu{{K_P}} \oplus \bu{{K_Q}}^*$.
Recall also that for acyclic $\bu{K}$ and any $\bu{N}$ 
we have 
$\Ext^1_{\cal{C}(\cal{P})}(\bu{N},\bu{K})
=\Ext^1_{\cal{C}(\cal{P})}(\bu{K},\bu{N})=0$.
Combining these results we have 

\begin{lem}
\label{lem:g:basis}
For an abelian category $\cal{A}$ satisfying
the conditions (a) -- (c),
$\cal{DH}(\cal{A})$ has a basis consisting of elements
$$
 [\bu{N}] * K_\alpha * K_\beta^*
$$
with $H^*(\bu{N}) \neq 0$ and $\alpha,\beta \in K(\cal{A})$.
\end{lem}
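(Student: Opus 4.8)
The plan is to combine two structural facts already available in the excerpt: first, by \cite[Lemma~3.2]{B} together with the Krull--Schmidt lemma just stated, every object $\bu{M}$ of $\cal{C}(\cal{P})$ decomposes uniquely (up to isomorphism) as $\bu{M} \cong \bu{N} \oplus \bu{{K_P}} \oplus \bu{{K_Q}}^*$ with $H_*(\bu{N}) \neq 0$ and $P,Q \in \Obj(\cal{P})$ determined up to isomorphism; second, since acyclic complexes of projectives have no extensions with any object of $\cal{C}(\cal{P})$ in either direction, the twisted product $*$ in $\cal{H}_{\tw}(\cal{C}(\cal{P}))$ satisfies $[\bu{N} \oplus \bu{{K_P}} \oplus \bu{{K_Q}}^*] = t^{c}\,[\bu{N}] * [\bu{{K_P}}] * [\bu{{K_Q}}^*]$ for an explicit integer $c$ coming from the Euler form $\<\cdot,\cdot\>'$. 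Hence in $\cal{H}_{\tw}(\cal{C}(\cal{P}))$ the elements $[\bu{N}] * [\bu{{K_P}}] * [\bu{{K_Q}}^*]$, as $\bu{N}$ ranges over iso-classes with $H_*(\bu{N})\neq 0$ and $P,Q$ over iso-classes of projectives, form a basis.

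First I would pass to the localization $\cal{DH}(\cal{A})$ and rewrite the acyclic factors. For a projective $P$, localizing inverts $[\bu{{K_P}}]$; and by definition $K_\alpha = [\bu{{K_P}}] * [\bu{{K_Q}}]^{-1}$, $K_\beta^* = [\bu{{K_R}}^*] * [\bu{{K_S}}^*]^{-1}$ for $\alpha = \cl{P} - \cl{Q}$, $\beta = \cl{R}-\cl{S}$, well-defined by \cite[Lemmas~3.4,~3.5]{B}. So the subalgebra of $\cal{DH}(\cal{A})$ generated by all $[\bu{{K_P}}]^{\pm 1}$ and $[\bu{{K_P}}^*]^{\pm 1}$ coincides with the subalgebra generated by the $K_\alpha, K_\beta^*$ ($\alpha,\beta \in K(\cal{A})$), and using the commutation relations \eqref{eq:KK} together with the fact that every projective class is a difference of effective projective classes, I would show that $\{K_\alpha * K_\beta^* \mid \alpha,\beta \in K(\cal{A})\}$ is a basis of that subalgebra --- here condition (e) (nonzero objects have nonzero $K$-classes) is what guarantees these elements are linearly independent rather than merely spanning.

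Next I would show $\{[\bu{N}] * K_\alpha * K_\beta^* \mid H_*(\bu{N})\neq 0,\ \alpha,\beta\in K(\cal{A})\}$ spans $\cal{DH}(\cal{A})$: every element of the localization is a $*$-product of $[\bu{M}]$'s and inverses of acyclic $[\bu{M}]$'s; using the Ore-type relations of Remark~\ref{rmk:localize} one moves all $K$-type factors to the right past the $[\bu{N}]$-factors (at the cost of powers of $t$), then merges the $[\bu{N}]$-factors using the product in $\cal{H}(\cal{C}(\cal{P}))$ and extracts their maximal acyclic summands via the Krull--Schmidt decomposition, again pushing the resulting acyclic summands to the right. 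For linear independence I would use the $K(\cal{A})$-grading: in $\cal{H}_{\tw}(\cal{C}(\cal{P}))$ the elements $[\bu{N}]*[\bu{{K_P}}]*[\bu{{K_Q}}^*]$ are linearly independent, and localization at the Ore set $S$ is flat, so a linear relation among the $[\bu{N}] * K_\alpha * K_\beta^*$ in $\cal{DH}(\cal{A})$ clears denominators to a relation in $\cal{H}_{\tw}(\cal{C}(\cal{P}))$ among elements of the above basis type (after absorbing $K$-factors into genuine acyclic complexes $\bu{{K_P}} \oplus \bu{{K_Q}}^*$), forcing all coefficients to vanish; condition (e) is again needed so that the $K_\alpha$ with distinct $\alpha$ stay distinct after localization.

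The main obstacle is the bookkeeping in the linear-independence step: one must be careful that clearing denominators does not accidentally identify two distinct basis elements, i.e. that the map from the free module on $\{[\bu{N}]\otimes K_\alpha \otimes K_\beta^*\}$ into $\cal{DH}(\cal{A})$ is injective and not merely surjective. This is exactly where one invokes that $S$ satisfies the Ore condition (Remark~\ref{rmk:localize}) so the localization is a well-behaved (flat) ring of fractions, and where condition (e) is indispensable. Everything else --- the decomposition of acyclics, the commutation relations \eqref{eq:KM}--\eqref{eq:KK}, the vanishing of Ext with acyclics --- is quoted directly from \cite{B} and the earlier lemmas of this section.
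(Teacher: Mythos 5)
Your proof takes essentially the same route as the paper, which only sketches this lemma: the Krull--Schmidt decomposition of any object of $\cal{C}(\cal{P})$ into a maximal acyclic summand plus a complement, the description $\bu{{K_P}}\oplus\bu{{K_Q}}^*$ of acyclics from \cite[Lemma~3.2]{B}, and the vanishing of extensions against acyclics (so $[\bu{M}]$ is a power of $t$ times $[\bu{N}]*[\bu{{K_P}}]*[\bu{{K_Q}}^*]$), with the localization bookkeeping you spell out --- pushing $K$-factors to the right for spanning, clearing denominators for independence --- being exactly what the paper's ``combining these results'' leaves implicit. The one point to correct is your appeal to condition (e): the lemma assumes only (a)--(c) and (e) is neither available nor needed, since injectivity of $\alpha\mapsto K_\alpha$ and $\beta\mapsto K_\beta^*$ follows from condition (c) (which gives $K(\cal{P})\cong K(\cal{A})$ with $K(\cal{P})$ free on indecomposable projectives, cf.\ \cite[Lemmas~3.4, 3.5]{B}), while Krull--Schmidt makes right multiplication by elements of the Ore set injective on $\cal{H}_{\tw}(\cal{C}(\cal{P}))$, so the denominator-clearing and independence argument goes through under (a)--(c) alone.
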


Now we introduce a coproduct on the whole algebra $\cal{DH}(\cal{A})$.

\begin{dfn}
Let $\cal{A}$ be an abelian category satisfying
the conditions (a), (b), (c) and (e),
and let $\chi$ be an arbitrary map from
$\Iso(\cal{C}(\cal{P})) \times \Iso(\cal{C}(\cal{P}))$ 
to $\bb{Z}$ (or $\bb{C}$)
satisfying the condition \eqref{eq:chi}.

Define a $\bb{C}$-linear map
$$
 \Delta'_{\chi}: 
 \cal{DH}(\cal{A}) \longrightarrow 
 \cal{DH}(\cal{A}) \mathop{\wh{\otimes}}
 \cal{DH}(\cal{A})
$$
by
$$
 \Delta'_{\chi}([\bu{N}] * K_{\alpha} * K_{\beta}^*)
 :=
 \Delta'_{\chi,\cal{E}^0}([\bu{N}])
 * (K_{\alpha} \otimes K_{\alpha})
 * (K_{\beta}^* \otimes K_{\beta}^*).
$$
Here we used Lemma~\ref{lem:g:basis} and the multiplication
$$
 (x \otimes y) * (z \otimes w)
 = (x * z) \otimes (y * w)
$$
on the tensor space 
$\cal{DH}(\cal{A}) \mathop{\wh{\otimes}} \cal{DH}(\cal{A})$.

We also define a $\bb{C}$-linear map
$$
 \ep: \cal{DH}(\cal{A}) \longrightarrow \bb{C}
$$
by
$$
 \ep([\bu{N}] * K_{\alpha} * K_{\beta}^*)
 := \delta_{\bu{N},[0]}.
$$
\end{dfn}

\begin{thm}
\label{thm:coassoc}
Assume that $\cal{A}$ is an abelian category satisfying
the conditions (a) -- (e).
Then the tuple $(\cal{DH}(\cal{A}),\Delta'_{\chi},\ep)$ is a 
topological counital coassociative coalgebra,
\end{thm}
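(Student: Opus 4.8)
The plan is to reduce the statement for $\cal{DH}(\cal{A})$ to the already-established coassociativity for $\bigl( \cal{H}(\cal{C}(\cal{P})), \Delta'_{\chi,\cal{E}_0} \bigr)$ together with the basic commutation relations \eqref{eq:KM}--\eqref{eq:KK}. First I would check that $\Delta'_{\chi}$ is well defined on $\cal{DH}(\cal{A})$: by Lemma~\ref{lem:g:basis} the elements $[\bu{N}] * K_{\alpha} * K_{\beta}^*$ with $H^*(\bu{N}) \neq 0$ form a basis, so the formula visibly defines a linear map; the only thing to verify is compatibility with the relations \eqref{eq:KK} and with the commutation relations of $K_\alpha, K_\beta^*$ past $[\bu{N}]$, i.e.\ that if one rewrites a monomial $[\bu{N}] * K_\alpha * K_\beta^*$ in a different admissible order the output is unchanged. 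This follows because $K_\alpha \otimes K_\alpha$ and $K_\beta^* \otimes K_\beta^*$ are group-like and the scalar $t^{(\alpha,\bu{\cl{N}})}$ picked up on one side is matched by the square of the same scalar on $\cal{DH}(\cal{A}) \ctp \cal{DH}(\cal{A})$ acting on $[\bu{M}] \otimes [\bu{N}]$ with $\bu{\cl{M}} + \bu{\cl{N}} = \bu{\cl{N}}$ in $K(\cal{A})$; this is the same bookkeeping that makes $\Delta'_\chi$ descend in Proposition~\ref{prop:coalg:E_0}, now used in the other direction.

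Next I would establish the counit axiom. Applying $\ep \otimes 1$ to $\Delta'_{\chi}([\bu{N}] * K_\alpha * K_\beta^*)$ and using $\ep(K_\alpha) = \ep(K_\beta^*) = 1$ (which one reads off from $\ep([\bu{K_P}]) = \delta_{\bu{K_P},0}$ — caution: $\bu{K_P}$ is not $0$ unless $P=0$, so in fact $\ep$ must be \emph{redefined} on $\cal{DH}(\cal{A})$ as in the Definition, where $\ep([\bu{N}]*K_\alpha*K_\beta^*) := \delta_{\bu{N},[0]}$, which is exactly group-like-compatible). Then $(\ep \otimes 1)\Delta'_{\chi}$ picks out the $\bu{M}=0$ term in $\Delta'_{\chi,\cal{E}_0}([\bu{N}])$, and since a conflation $\bu{N}' \to \bu{L} \to 0$ forces $\bu{N}' \cong \bu{L}$, one gets $w^{\bu{N}}_{0,\bu{N}} = 1$ and hence $(\ep \otimes 1)\Delta'_{\chi}(x) = x$; the other side is symmetric. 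This is routine.

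The substantive step is coassociativity. I would verify
\begin{align*}
(\Delta'_{\chi} \otimes 1)\circ \Delta'_{\chi}([\bu{N}]*K_\alpha*K_\beta^*)
= (1 \otimes \Delta'_{\chi})\circ \Delta'_{\chi}([\bu{N}]*K_\alpha*K_\beta^*)
\end{align*}
by first pushing the group-like factors $(K_\alpha \otimes K_\alpha \otimes K_\alpha)$ and $(K_\beta^* \otimes K_\beta^* \otimes K_\beta^*)$ to the right of both sides — using \eqref{eq:KM} and \eqref{eq:KK} — so that each side equals $\bigl[(\Delta'_{\chi,\cal{E}_0}\otimes 1)\Delta'_{\chi,\cal{E}_0}([\bu{N}])\bigr]$ (resp.\ $\bigl[(1\otimes\Delta'_{\chi,\cal{E}_0})\Delta'_{\chi,\cal{E}_0}([\bu{N}])\bigr]$) multiplied by the \emph{same} threefold group-like terms. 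The scalar corrections from commuting $K_\alpha$ past the various $[\bu{M}_i]$ in the first iterate cancel because the classes in the three tensor slots add up to $\bu{\cl{N}}$, so each $K_\alpha$-copy is moved past a collection of complexes whose total class is fixed; here I need the additivity of $\chi$-type exponents only through the bilinearity of the symmetrized Euler form $(\cdot,\cdot)$, not through $\chi$ itself. After this reduction the identity is precisely the coassociativity of $\Delta'_{\chi,\cal{E}_0}$ on $\cal{H}(\cal{C}(\cal{P}))$, which is the preceding Lemma.

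I expect the main obstacle to be exactly the bookkeeping in that last paragraph: making sure that when the group-like factors are pulled through the iterated coproduct the accumulated powers of $t$ genuinely cancel on the nose. The danger is that in the first tensor slot of $(\Delta'_{\chi,\cal{E}_0}\otimes 1)\Delta'_{\chi,\cal{E}_0}([\bu{N}])$ the summation index runs over \emph{pairs} $(\bu{P},\bu{Q})$ while $K_\alpha$ must be distributed to both; the clean way is to observe that $K_\alpha \otimes K_\alpha$ acting on $\cal{H} \ctp \cal{H}$ is a coalgebra map for the group-like comultiplication, i.e.\ $(\Delta'_{\chi,\cal{E}_0} \otimes 1)(x * (K_\alpha \otimes K_\alpha)) = (\Delta'_{\chi,\cal{E}_0} \otimes 1)(x) * (K_\alpha \otimes K_\alpha \otimes K_\alpha)$ up to the predictable scalar, and this scalar depends only on the \emph{total} class, which is the same on both sides of the coassociativity identity. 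Once this is isolated as a short lemma the rest is the purely formal reduction above, and I would present it in that order: well-definedness, counit, then the group-like-transport lemma, then coassociativity by reduction to the previous Lemma.
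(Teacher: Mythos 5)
Your overall route (define everything on the basis of Lemma~\ref{lem:g:basis}, treat the $K$'s as group-like, reduce coassociativity to that of $\Delta'_{\chi,\cal{E}_0}$, check the counit directly) is the same in spirit as the paper's, but you locate the difficulty in the wrong place, and the step you gloss over is a genuine gap. The group-like transport is in fact immediate: right multiplication by $K_\alpha * K_\beta^*$ permutes the basis of Lemma~\ref{lem:g:basis}, so $\Delta'_{\chi}(y * K_\alpha * K_\beta^*) = \Delta'_{\chi}(y)*(K_\alpha\otimes K_\alpha)*(K_\beta^*\otimes K_\beta^*)$ holds for every $y$ by the definition and linearity; no Euler-form bookkeeping is needed there. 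The real problem is the inner application of $\Delta'_{\chi}$ in the iterated coproduct. Writing $\Delta'_{\chi,\cal{E}_0}([\bu{N}]) = \sum c_{\bu{M}',\bu{N}'}\,[\bu{M}'] \otimes [\bu{N}']$, your reduction tacitly uses that $\Delta'_{\chi}$ applied to the image of $[\bu{M}']$ in $\cal{DH}(\cal{A})$ equals the image of $\Delta'_{\chi,\cal{E}_0}([\bu{M}'])$. That is only clear when $\bu{M}'$ is again a basis complex, i.e.\ has no nonzero acyclic direct summand: if $\bu{M}' \cong \bu{M}'' \oplus \bu{{K_P}}$, then by definition $\Delta'_{\chi}$ places $K_{\cl{P}}$ group-likely in \emph{both} tensor factors, whereas $\Delta'_{\chi,\cal{E}_0}([\bu{M}'])$ contains, for instance, the term $[\bu{M}']\otimes[0]$, whose image carries $K_{\cl{P}}$ only in the first factor; these are different elements of $\cal{DH}(\cal{A}) \ctp \cal{DH}(\cal{A})$. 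So the claim that after pulling out the group-likes the identity ``is precisely the coassociativity of $\Delta'_{\chi,\cal{E}_0}$'' does not stand as written.

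What is missing, and what actually carries the weight, is the statement that if $\bu{N}$ has no nonzero acyclic direct summand, then neither does any sub- or quotient complex occurring in a short exact sequence in $\cal{C}(\cal{P})$ with middle term $\bu{N}$. This follows from the $\Ext$-vanishing recalled just before Lemma~\ref{lem:g:basis}: if a nonzero acyclic $\bu{K}$ were a direct summand of the sub (or, after pulling back along the quotient map, of the quotient), then, $\cal{C}(\cal{P})$ being extension-closed, $0 \to \bu{K} \to \bu{N} \to \bu{N}/\bu{K} \to 0$ is a short exact sequence in $\cal{C}(\cal{P})$ with $\Ext^1_{\cal{C}(\cal{A})}(\bu{N}/\bu{K},\bu{K})=0$, hence split, making $\bu{K}$ a summand of $\bu{N}$ --- a contradiction. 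The same observation is also needed for your counit verification: $(\ep\otimes 1)\Delta'_{\chi}$ does not only pick out the $\bu{M}'=0$ term, because $\ep$ is nonzero (a power of $t$) on any term whose relevant slot is a nonzero \emph{acyclic} complex, and it is this splitting argument, not the remark that a conflation onto $0$ is an isomorphism, that excludes such contributions. Note finally that the paper's own (terse) proof runs through Proposition~\ref{prop:coalg:E_0} together with Lemma~\ref{lem:g:basis} and the Ore condition, i.e.\ through descent to the quotient by the span of the acyclic classes; you invoke that proposition only for an unrelated piece of bookkeeping, so your argument neither closes the gap above nor reproduces the paper's route.
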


\begin{proof}
Let us denote 
$S:=  \left\{ [\bu{M}] \mid H_*(\bu{M})=0 \right\}$.
Proposition~\ref{prop:coalg:E_0} yields that 
$\Delta'_{\chi,\cal{E}_0}$ descends to 
the quotient $\bb{C}$-vector space
$\cal{H}(\cal{C}(\cal{P}))/\mathop{\text{Span}}(S)$.
By Lemma~\ref{lem:g:basis},
this quotient space has the basis 
consisting of $[\bu{N}]$.
With this observation and 
the Ore condition satisfied by $S$,
we see that $\Delta'_{\chi}$ is well-defined
and  
$(\cal{DH}(\cal{A}),\Delta'_{\chi})$ is 
a topological coassociative coalgebra.

It is easy to check that $\ep$ gives a counit.
\end{proof}


\section{Hereditary case}
\label{sect:hered}

In the case where $\cal{A}$ is hereditary,
one knows that $\cal{H}(\cal{A})$ is embedded into 
$\cal{DH}(\cal{A})$ as an algebra \cite[Lemma~4.3]{B},
and moreover $\cal{DH}(\cal{A})$ is Drinfeld double 
of  $\cal{H}(\cal{A})$ \cite{Y}.
In this section we compare the coalgebra structures 
on $\cal{H}(\cal{A})$ and  $\cal{DH}(\cal{A})$.


\subsection{Basis of $\cal{DH}(\cal{A})$}

Assume that 
$\cal{A}$ satisfies the conditions (a)--(e).
By \cite[\S4]{B} we have a nice basis for $\cal{DH}(\cal{A})$.
To explain that, let us recall the minimal resolution of objects of $\cal{A}$.

\begin{dfn}[{\cite[\S4]{B}}]
\label{dfn:B:4}
Assume the conditions (a), (c) and (d) on $\cal{A}$. 

\begin{enumerate}
\item[(1)]
Every object $A\in \cal{A}$ has a projective resolution
\begin{align}\label{eq:res}
\xymatrix@1{
 0 \ar[r] 
&P \ar[r]^{f} 
&Q \ar[r]
&A \ar[r]
&0}.
\end{align}
Decomposing $P$ and $Q$ into finite direct sums
$P=\oplus_i P_i$, $Q=\oplus_j Q_j$,
one may write $f=(f_{ij})$ in matrix form with $f_{ij}: P_i\to Q_j$. 
The resolution \eqref{eq:res} is said to be minimal 
if none of the morphisms $f_{ij}$ is an isomorphism. 

\item[(2)]
Given an object $A$ in $\cal{A}$, 
take a minimal projective resolution 
\begin{align*}
\xymatrix@1{
 0 \ar[r] 
&P_A \ar[r]^{f_A} 
&Q_A \ar[r] 
&A \ar[r]
&0},
\end{align*}
we define a $\bb{Z}/2\bb{Z}$-graded complex
\begin{align}
\label{eq:C_A}
\bu{{C_A}} 
 := \cpx{P_A}{f_A}{0}{Q_A} \in \Obj(\cal{C}(\cal{P})).
\end{align}
(Remark: By \cite[Lemma~4.1]{B}, 
arbitrary two minimal projective resolutions of $A$ are isomorphic,
so the complex $\bu{{C_A}}$ is well-defined up to isomorphism.)

\item[(3)]
Assume $\cal{A}$ satisfies the conditions (a) -- (e). 
Given an object $A$ in $\cal{A}$, 
we define elements  $\bu{{E_A}},\bu{{F_A}}$ in $\cal{DH}(\cal{A})$ by
\begin{align}
\label{eq:E_A}
E_A :=  t^{\<P_A,A\>} K_{-\cl{P_A}}*[\bu{{C_A}}],
\qquad
F_A :=  {E_A}^*.
\end{align}
Here we used a minimal projective decomposition of $A$
and the associated complex $\bu{{C_A}}$ shown in \eqref{eq:C_A}.
\end{enumerate}
\end{dfn}


\begin{fct}[{\cite[Lemma\,4.2]{B}}]
\label{fct:B:lem4.2}
Assume $\cal{A}$ is an abelian category
satisfying the conditions (a), (c) and (d).
Then every object $\bu{M}$ in $C(\cal{P})$ has a direct sum decomposition
\begin{align*}
\bu{M} = \bu{{C_A}} \oplus \bu{{C_B}}^* 
          \oplus \bu{{K_P}} \oplus \bu{{K_Q}}^*.
\end{align*}
Moreover, 
the objects $A,B \in \Obj(\cal{A})$ and $P,Q \in \Obj(\cal{P})$ 
are unique up to isomorphism.
\end{fct}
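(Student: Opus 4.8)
The plan is to read the decomposition off the two differentials of $\bu{M}$, to recover $A$ and $B$ as the homology of $\bu{M}$, and to match the two remaining summands with the trivial complexes $\bu{{K_P}}$, $\bu{{K_Q}}^*$; the uniqueness part will then follow from the Krull--Schmidt property (condition (a)) together with \eqref{eq:acyclic}. Throughout, write $\bu{M}$ as the data $d_1\colon M_1\to M_0$, $d_0\colon M_0\to M_1$ with $d_1 d_0 = d_0 d_1 = 0$ and $M_0,M_1\in\Obj(\cal{P})$. The one homological input I need is that, since $\cal{A}$ has global dimension at most $1$ (condition (d)), every subobject $K$ of a projective $M$ is again projective: the long exact $\Ext$-sequence of $0\to K\to M\to M/K\to 0$ gives $\Ext^1_{\cal{A}}(K,-)\cong\Ext^2_{\cal{A}}(M/K,-)=0$. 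In particular $\Ker d_1$, $\Ker d_0$, $\Img d_1$, $\Img d_0$ are all projective.

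Hence the epimorphism $M_1\twoheadrightarrow\Img d_1$ splits, and the monomorphism $\Ker d_0\hookrightarrow M_0$ splits because its cokernel $\Img d_0$ is projective; the same holds with the indices exchanged. This produces $M_1 = K_1\oplus I_1$ and $M_0 = K_0\oplus I_0$ with $K_i = \Ker d_i$ and $I_i\cong\Img d_i$. Since $d_1 d_0 = d_0 d_1 = 0$, the restriction of $d_1$ to $I_1$ is a monomorphism $\kappa\colon I_1\hookrightarrow K_0$, the restriction of $d_0$ to $I_0$ is a monomorphism $\iota\colon I_0\hookrightarrow K_1$, and $d_1|_{K_1}=0$, $d_0|_{K_0}=0$. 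Thus $\bu{M}$ is completely encoded by the two short exact sequences $0\to I_1\xrightarrow{\kappa} K_0\to\Coker\kappa\to 0$ and $0\to I_0\xrightarrow{\iota} K_1\to\Coker\iota\to 0$, which are projective resolutions of $\Coker\kappa = H_0(\bu{M})$ and $\Coker\iota = H_1(\bu{M})$, respectively.

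I would then reduce these resolutions to the minimal ones: by \cite[Lemma~4.1]{B}, together with the standard fact that over a Krull--Schmidt category a length-one projective resolution is the direct sum of the minimal resolution with trivial complexes of the form $(P\xrightarrow{\,\Id\,}P)$, we may write $\kappa\cong f_A\oplus\Id_P$ and $\iota\cong f_B\oplus\Id_Q$, where $A:=H_0(\bu{M})$, $B:=H_1(\bu{M})$, $0\to P_A\xrightarrow{f_A}Q_A\to A\to0$ and $0\to P_B\xrightarrow{f_B}Q_B\to B\to0$ are minimal projective resolutions, and $P,Q\in\Obj(\cal{P})$. Feeding this back into the description above, the differentials become block-diagonal with respect to a refined decomposition of $M_1$ and $M_0$ into the graded pieces $P_A,Q_A$ (linked by $f_A$), $Q_B,P_B$ (linked by $f_B$, in the opposite direction), $P,P$ (linked by the identity) and $Q,Q$ (linked by the identity); collecting each pair into a subcomplex yields $\bu{M}\cong\bu{{C_A}}\oplus\bu{{C_B}}^*\oplus\bu{{K_P}}\oplus\bu{{K_Q}}^*$, where the identifications with $\bu{{C_B}}^*$ and $\bu{{K_Q}}^*$ use the evident sign isomorphism $(s_1,s_0)=(\Id,-\Id)$ to absorb the minus signs in the definition of $*$. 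This settles existence.

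For uniqueness, $H_0(\bu{M})=A$ and $H_1(\bu{M})=B$ are isomorphism invariants (one checks $H_0(\bu{{C_A}})=A$, $H_1(\bu{{C_B}}^*)=B$, while the other homologies of the four building blocks vanish), so $A$ and $B$ are determined up to isomorphism. Neither $\bu{{C_A}}$ nor $\bu{{C_B}}^*$ admits an acyclic direct summand --- a $\bu{{K_P}}$-summand would force an isomorphism among the matrix blocks of $f_A$, contrary to minimality, and a $\bu{{K_Q}}^*$-summand would require a nonzero differential where $\bu{{C_A}}$ has none, and dually for $\bu{{C_B}}^*$ --- so $\bu{{K_P}}\oplus\bu{{K_Q}}^*$ is the maximal acyclic direct summand of $\bu{M}$, which is well defined up to isomorphism by the Krull--Schmidt property (using \eqref{eq:acyclic} to know that the acyclic indecomposables are exactly the $\bu{{K_P}}$ and the $\bu{{K_Q}}^*$). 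Then \eqref{eq:acyclic}, i.e.\ \cite[Lemma~3.2]{B}, shows $P$ and $Q$ are determined up to isomorphism. I expect the regrouping of graded pieces in the existence part to be the only genuinely fiddly step: one must track exactly which projective sits in which $\bb{Z}/2\bb{Z}$-degree, in which direction the surviving differential points, and how this interacts with the sign conventions of the involution $*$ and of $\bu{{K_P}}^*$; the homological ingredients are routine given \cite[Lemmas 3.2, 4.1]{B}.
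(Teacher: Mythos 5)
Your proof is correct, and it is essentially the argument of \cite[Lemma~4.2]{B} itself: the paper states this as a Fact and cites Bridgeland rather than reproving it, and Bridgeland's proof likewise uses hereditarity to make kernels and images of the differentials projective, splits $M_1$ and $M_0$ accordingly to cut $\bu{M}$ into the two subcomplexes carrying $H_0$ and $H_1$, and then strips off trivial summands to reach the minimal resolutions. All the delicate points you flag (the direction of the surviving differential, the sign isomorphism absorbing the minus signs in $\bu{{C_B}}^*$ and $\bu{{K_Q}}^*$, and Krull--Schmidt for the uniqueness) are handled correctly.
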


One also has

\begin{fct}[{Corollary of \cite[Lemmas~4.6, 4.7]{B}}]
\label{fct:basis}
$\cal{DH}(\cal{A})$ has a basis consisting of elements 
\begin{align*}
E_A * K_\alpha * K_{\beta}^* * F_B,\quad
A,B \in \Iso(\cal{A}),\ \alpha,\beta \in K(\cal{A}).
\end{align*}
\end{fct}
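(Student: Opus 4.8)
The plan is to compare the claimed spanning set with the basis already produced in Lemma~\ref{lem:g:basis}, and to check that the transition matrix between the two is invertible; this is essentially the content of \cite[Lemmas~4.6, 4.7]{B}, reorganised with the tools assembled above.

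First I would fix the index set. By Lemma~\ref{lem:g:basis}, $\cal{DH}(\cal{A})$ has a basis given by the elements $[\bu{N}]*K_\gamma*K_\delta^*$ with $\bu{N}$ having no acyclic direct summand and $\gamma,\delta\in K(\cal{A})$; by Fact~\ref{fct:B:lem4.2} such $\bu{N}$ are precisely the complexes $\bu{{C_A}}\oplus\bu{{C_B}}^*$ with $A,B\in\Iso(\cal{A})$, the map $(A,B)\mapsto\bu{{C_A}}\oplus\bu{{C_B}}^*$ being a bijection onto the relevant isomorphism classes (and $H_*(\bu{{C_A}}\oplus\bu{{C_B}}^*)=0$ only for $A=B=0$). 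Hence
\[
 \cal{B}_0 := \bigl\{\, [\bu{{C_A}}\oplus\bu{{C_B}}^*]*K_\gamma*K_\delta^* \ \big|\ A,B\in\Iso(\cal{A}),\ \gamma,\delta\in K(\cal{A}) \,\bigr\}
\]
is a $\bb{C}$-basis of $\cal{DH}(\cal{A})$, indexed by the same set $\Iso(\cal{A})^2\times K(\cal{A})^2$ as the claimed set $\cal{B}:=\{E_A*K_\alpha*K_\beta^**F_B\}$. It therefore suffices to show the change of basis from $\cal{B}_0$ to $\cal{B}$ is invertible.

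Next I would expand each $E_A*K_\alpha*K_\beta^**F_B$ in $\cal{B}_0$. Using Definition~\ref{dfn:B:4}, $F_B=E_B^*$, and the relations \eqref{eq:KM}, \eqref{eq:KK} to move all $K$- and $K^*$-factors to the left, one gets
\[
 E_A*K_\alpha*K_\beta^**F_B = (\text{nonzero scalar})\cdot K_{\alpha-\cl{P_A}}*K_{\beta-\cl{P_B}}^**\bigl([\bu{{C_A}}]*[\bu{{C_B}}^*]\bigr).
\]
By \cite[Lemma~3.3]{B} the middle complexes $\bu{L}$ occurring in $[\bu{{C_A}}]*[\bu{{C_B}}^*]$ are classified by $\Hom_{\Ho(\cal{A})}(\bu{{C_A}},\bu{{C_B}})$, which by heredity equals $\Hom_{\cal{A}}(A,B)$. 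Since $\bu{{C_A}}$, resp.\ $\bu{{C_B}}^*$, has homology $A$ in degree $0$, resp.\ $B$ in degree $1$ (by \eqref{eq:C_A}), the homology long exact sequence gives $H_0(\bu{L})=\Ker\varphi$ and $H_1(\bu{L})=\Coker\varphi$ for $\varphi\colon A\to B$ the class of the extension; comparing graded pieces gives $L_1=P_A\oplus Q_B$ and $L_0=Q_A\oplus P_B$. Applying Fact~\ref{fct:B:lem4.2} to $\bu{L}$ and absorbing the acyclic summands into $K$-factors (via $[\bu{{K_P}}]=K_{\cl{P}}$ and $\Ext^1_{\cal{C}(\cal{P})}(-,\text{acyclic})=0$), each such $\bu{L}$ contributes to $\cal{B}_0$-elements with first two indices $(A',B')$ satisfying $\dim_{\frk{k}}A'+\dim_{\frk{k}}B'=\dim_{\frk{k}}A+\dim_{\frk{k}}B-2\dim_{\frk{k}}\Img\varphi\le\dim_{\frk{k}}A+\dim_{\frk{k}}B$, with equality forcing $\varphi=0$, in which case $A'=A$, $B'=B$ and $\bu{L}\cong\bu{{C_A}}\oplus\bu{{C_B}}^*$.

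Finally I would conclude. Ordering $\Iso(\cal{A})^2$ by total dimension $\dim_{\frk{k}}A+\dim_{\frk{k}}B$ (refined arbitrarily to a total order), the previous step shows the transition matrix from $\cal{B}_0$ to $\cal{B}$ is block upper triangular; moreover, for fixed $(A,B)$ the only $\cal{B}_0$-element with first two indices $(A,B)$ appearing in $E_A*K_\alpha*K_\beta^**F_B$ is $[\bu{{C_A}}\oplus\bu{{C_B}}^*]*K_{\alpha-\cl{P_A}}*K_{\beta-\cl{P_B}}^*$, with a nonzero coefficient, so the diagonal block for $(A,B)$ is an affine relabelling $(\alpha,\beta)\mapsto(\alpha-\cl{P_A},\beta-\cl{P_B})$ composed with multiplication by nonzero scalars, hence invertible. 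A triangular matrix with invertible diagonal blocks is invertible, so $\cal{B}$ is a basis of $\cal{DH}(\cal{A})$. The main obstacle is the third step: computing the homology of the middle complex of a non-split extension of $\bu{{C_A}}$-type objects precisely enough to secure the \emph{strict} drop $\dim_{\frk{k}}A'+\dim_{\frk{k}}B'<\dim_{\frk{k}}A+\dim_{\frk{k}}B$ when $\varphi\neq0$, which is exactly what makes the transition triangular; the remaining manipulations are routine bookkeeping with \eqref{eq:KM}--\eqref{eq:KK}.
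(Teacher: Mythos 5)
Your proposal is correct, and it is essentially the argument the paper intends: the Fact is quoted from Bridgeland's Lemmas~4.6--4.7 without reproof, and those lemmas are proved by exactly this triangular-decomposition scheme — expand $E_A*K_\alpha*K_\beta^**F_B$ as a nonzero multiple of $K_{\alpha-\cl{P_A}}*K^*_{\beta-\cl{P_B}}*[\bu{{C_A}}]*[\bu{{C_B}}^*]$, note that the split extension contributes the term $[\bu{{C_A}}\oplus\bu{{C_B}}^*]*K_{\alpha-\cl{P_A}}*K^*_{\beta-\cl{P_B}}$ with nonzero coefficient while a nonzero extension class strictly lowers $\dim_{\frk{k}}H_0+\dim_{\frk{k}}H_1$, and conclude by filtration against the basis of Lemma~\ref{lem:g:basis} refined by Fact~\ref{fct:B:lem4.2}. (A minor remark: you do not even need the identification of the connecting map with the classifying morphism $\varphi$; the long exact sequence alone gives $\dim\Ker\delta+\dim\Coker\delta\le\dim A+\dim B$ with equality forcing, via Krull--Schmidt on the graded pieces, $\bu{L}\cong\bu{{C_A}}\oplus\bu{{C_B}}^*$.)
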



\subsection{Twisted coproduct and coalgebra embedding}

Let us recall the  twisted Hall algebra $\cal{H}_{\tw}(\cal{A})$ 
for an abelian category $\cal{A}$.

\begin{dfn}
Let $\cal{A}$ be an abelian category 
satisfying the conditions (a) --(c). 
The twisted Hall algebra 
$\cal{H}_{\tw}(\cal{A})=\bigl( \cal{H}(\cal{A}),*,[0] \bigr)$ 
is the tuple consisting of the $\bb{C}$-vector space $\cal{H}(\cal{A})$,  
the twisted multiplication
\begin{equation}
\label{eq:tw-mul2}
[A]*[B] := t^{ \< \cl{A},\cl{B} \> } [A] \diamond [B] 
\end{equation}
for $A,B \in \Iso(\cal{A})$,
and the class of the zero object.
\end{dfn}

\begin{rmk}
As mentioned in \cite[\S3.5]{B},
the symbol $*$ has different meanings 
in $\cal{H}_{\tw}(\cal{A})$ and $\cal{H}_{\tw}(\cal{C}(\cal{P}))$.
Compare the expressions \eqref{eq:tw-mul} and \eqref{eq:tw-mul2}.
\end{rmk}

As for the relation to Bridgeland's Hall algebra, we have 

\begin{fct}[{\cite[Lemma~4.3]{B}}]
\label{fct:B:lem4.3}
Assume that $\cal{A}$ satisfies the conditions (a) -- (e).
Then there is an embedding of $\bb{C}$-algebras 
\begin{align}
\label{eq:I^e_+:1}
I^{\e}_+: 
 \bigl( \cal{H}_{\tw}(\cal{A}),*,[0] \bigr) 
 \longinto
 \bigl( \cal{DH}(\cal{A}),*,[0] \bigr) 
\qquad
[A] \longmapsto E_A.
\end{align}
\end{fct}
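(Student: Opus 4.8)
The plan is to check the two assertions hidden in the word ``embedding'' --- multiplicativity and injectivity --- separately. Injectivity is the easy half and follows from Fact~\ref{fct:basis}: taking $\alpha=\beta=0$ and $B=0$ there (note $K_0=[0]$ by \eqref{eq:KK} and $F_0=E_0^*=[0]$) shows that $\{E_A\}_{A\in\Iso(\cal{A})}$ is part of a $\bb{C}$-basis of $\cal{DH}(\cal{A})$; since $\{[A]\}_{A\in\Iso(\cal{A})}$ is a basis of $\cal{H}_{\tw}(\cal{A})$, the map $[A]\mapsto E_A$ carries a basis to a linearly independent set and is therefore injective.

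For multiplicativity, the first step is to strip off the $K$-factors. Using $\cl{\bu{C_A}}=\cl{Q_A}-\cl{P_A}=\cl{A}$ and the commutation relations \eqref{eq:KM}, \eqref{eq:KK}, one rewrites
$$ E_A*E_B = t^{\<P_A,A\>+\<P_B,B\>+(\cl{P_B},\cl{A})}\,K_{-\cl{P_A}-\cl{P_B}}*\bigl([\bu{C_A}]*[\bu{C_B}]\bigr), $$
and since $[\bu{C_A}]*[\bu{C_B}]=t^{\<Q_A,Q_B\>+\<P_A,P_B\>}[\bu{C_A}]\diamond[\bu{C_B}]$, everything reduces to computing the untwisted Hall product $[\bu{C_A}]\diamond[\bu{C_B}]$ in $\cal{H}(\cal{C}(\cal{P}))$ and then reassembling the prefactors together with the surviving $K$'s.

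Next I would identify the extensions. In a short exact sequence $0\to\bu{C_B}\to\bu{L}\to\bu{C_A}\to0$ in $\cal{C}(\cal{P})$, projectivity of each $(C_A)_i$ splits the sequence in each degree, so $\bu{L}$ has underlying graded object $(P_B\oplus P_A,\,Q_B\oplus Q_A)$ and is described by a single gluing map $P_A\to Q_B$; chasing $d_1d_0=d_0d_1=0$ together with the injectivity of $f_B$ forces the degree-$0$ differential of $\bu{L}$ to vanish. The $2$-periodic homology long exact sequence then collapses to $H_1(\bu{L})=0$ and $0\to B\to H_0(\bu{L})\to A\to0$, so $C:=H_0(\bu{L})$ is an extension of $A$ by $B$; feeding this into Fact~\ref{fct:B:lem4.2} forces $\bu{L}\cong\bu{C_C}\oplus\bu{K_P}$, where $P$ is the redundant projective obtained by comparing the minimal resolution of $C$ with the horseshoe resolution of $C$ assembled from the minimal resolutions of $A$ and $B$; concretely $P_B\oplus P_A\cong P_C\oplus P$ and $Q_B\oplus Q_A\cong Q_C\oplus P$. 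Conversely every such $\bu{C_C}\oplus\bu{K_P}$ occurs, with $C$ ranging over $\Iso(\cal{A})$, so $[\bu{C_A}]\diamond[\bu{C_B}]$ is a sum over $C$ of multiples of $[\bu{C_C}\oplus\bu{K_P}]$.

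Finally I would count and collapse the powers of $t$. Riedtmann/Green-type bookkeeping --- the identity \eqref{eq:gehaaa} applied in $\cal{C}(\cal{A})$, the isomorphism $\Ext^1_{\cal{C}(\cal{A})}(\bu{C_A},\bu{C_B})\cong\Hom_{\Ho(\cal{A})}(\bu{C_A},\bu{C_B}^*)$, and the parametrization of the $\bu{L}$'s by gluing maps modulo homotopy --- should give $g^{\bu{C_C}\oplus\bu{K_P}}_{\bu{C_A},\bu{C_B}}=g^{C}_{A,B}$, the extra acyclic summand $\bu{K_P}$ contributing only automorphisms that cancel against the $\Aut$-factors. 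Rewriting $[\bu{C_C}\oplus\bu{K_P}]=t^{m}[\bu{C_C}]*K_{\cl{P}}$ (using $[\bu{K_P}]=K_{\cl{P}}$ in $\cal{DH}(\cal{A})$), pushing $K_{-\cl{P_A}-\cl{P_B}}*\cdots*K_{\cl{P}}$ together into $K_{-\cl{P_C}}*[\bu{C_C}]$ via \eqref{eq:KM}, \eqref{eq:KK}, and comparing with $E_C=t^{\<P_C,C\>}K_{-\cl{P_C}}*[\bu{C_C}]$, the claim is that all the accumulated exponents --- the minimal-resolution normalizations $\<P_A,A\>$, $\<P_B,B\>$, $\<P_C,C\>$, the Euler forms $\<Q_A,Q_B\>$, $\<P_A,P_B\>$, and the symmetrized-Euler powers produced by moving the $K$'s past $[\bu{C_A}]$ and past $[\bu{C_C}\oplus\bu{K_P}]$ --- conspire to leave exactly $t^{\<\cl{A},\cl{B}\>}$, whence $E_A*E_B=\sum_C t^{\<\cl{A},\cl{B}\>}g^{C}_{A,B}E_C=I^{\e}_+([A]*[B])$. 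The homological identification in the previous paragraph is routine once Fact~\ref{fct:B:lem4.2} is in hand; the main obstacle is precisely this last $t$-exponent collapse, i.e. verifying that the various Euler forms on $\cal{A}$, on $\cal{C}(\cal{A})$ and on $K(\cal{A})$, the resolution normalizations, and the symmetrized form in the $K$-commutations all cancel --- and it is exactly here that heredity of $\cal{A}$ is used, to kill the higher-$\Ext$ contributions and make these Euler-form identities close up.
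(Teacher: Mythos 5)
Your overall architecture is sound and is essentially the argument behind the cited result (the paper itself gives no proof of this Fact, quoting \cite[Lemma~4.3]{B}): the degreewise splitting by projectivity, the forced vanishing of the degree-$0$ differential of $\bu{L}$, the identification $H_1(\bu{L})=0$, $0\to B\to H_0(\bu{L})\to A\to 0$, and the decomposition $\bu{L}\cong\bu{{C_C}}\oplus\bu{{K_P}}$ via Fact~\ref{fct:B:lem4.2} are all correct (to rule out a $\bu{{K_Q}}^*$ summand you should say explicitly that it would force $d^L_0\neq 0$). The injectivity argument via Fact~\ref{fct:basis} is legitimate inside this paper's presentation, though it is circular relative to \cite{B}, where the basis statement comes after (and uses) this embedding.

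The genuine gap is the quantitative step, which you defer and which is the actual content of the lemma. The one concrete identity you assert there, $g^{\bu{{C_C}}\oplus\bu{{K_P}}}_{\bu{{C_A}},\bu{{C_B}}}=g^{C}_{A,B}$ ``with the acyclic summand contributing only automorphisms that cancel,'' is false in the paper's normalization \eqref{eq:g}. Take $\cal{A}=\Rep_{\frk{k}}(1\to 2)$, $A=S_1$, $B=S_2=P_2$, so $P_A=P_2$, $Q_A=P_1$, $P_B=0$, $Q_B=P_2$; every nonsplit extension has $C=P_1$ and middle complex $\bu{L}\cong\bu{{C_{P_1}}}\oplus\bu{{K_{P_2}}}$. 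Then $g^{P_1}_{S_1,S_2}=1$ (the socle), while the subcomplexes of $\bu{L}$ isomorphic to $\bu{{C_{S_2}}}$ with quotient $\cong\bu{{C_{S_1}}}$ are the $q-1$ ``diagonal'' copies of $S_2$ inside $L_0=P_1\oplus P_2$, so $g^{\bu{L}}_{\bu{{C_{S_1}}},\bu{{C_{S_2}}}}=q-1$; here $a_{\bu{L}}/a_{\bu{{C_{P_1}}}}=q-1$ does not cancel. In general the comparison of structure constants produces Hom-counting factors such as $\bigl|\Hom_{\cal{A}}(Q_A,P_B)\bigr|=t^{2\<Q_A,P_B\>}$ (compare Lemma~\ref{lem:aa} and the ratio $t^{2\<Q_D,P_B\>}$ in Lemma~\ref{lem:ggt}), and it is exactly these powers of $t$, together with the twists $t^{\<Q_A,Q_B\>+\<P_A,P_B\>}$ and the $K$-commutation exponents, that must be shown to collapse to $t^{\<\cl{A},\cl{B}\>}$ --- the step you yourself flag as ``the main obstacle'' and do not carry out. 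To close the gap you should show that taking $H_0$ gives a bijection $\Ext^1_{\cal{C}(\cal{A})}(\bu{{C_A}},\bu{{C_B}})\xrightarrow{\ \sim\ }\Ext^1_{\cal{A}}(A,B)$ matching middle terms $\bu{{C_C}}\oplus\bu{{K_P}}\leftrightarrow C$, work with the unnormalized coefficients $\bigl|\Ext^1_{\bu{L}}\bigr|/\bigl|\Hom\bigr|$ using $\bigl|\Hom_{\cal{C}(\cal{A})}(\bu{{C_A}},\bu{{C_B}})\bigr|=\bigl|\Hom_{\cal{A}}(Q_A,P_B)\bigr|\,\bigl|\Hom_{\cal{A}}(A,B)\bigr|$, and then do the exponent computation explicitly, in the style of the proof of Theorem~\ref{thm:embedding}; heredity enters only through $\Ext$-vanishing against projectives, not as a device that makes the bookkeeping disappear.
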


Next let us recall 
the extended Hall algebra $\cal{H}^{\e}_{\tw}(\cal{A})$
and the twisted coproduct $\Delta$ on it
(see also \cite[\S1.5, Page 16]{S} for the explanation).

The extended Hall algebra $\cal{H}_{\tw}^{\e}(\cal{A})$ 
is defined as an extension of $\cal{H}_{\tw}(\cal{A})$
by adjoining symbols $K_\alpha$ for classes $\alpha\in K(\cal{A})$, 
and imposing relations
\begin{align*}
K_\alpha * K_\beta = K_{\alpha+\beta}, \qquad 
K_\alpha * [B]     = t^{( \alpha,\cl{B})} [B] * K_\alpha
\end{align*}
for $\alpha,\beta \in K(\cal{A})$ 
and $B \in \Iso(\cal{A})$.
Here we used the symmetrized Euler form \eqref{eq:sym-euler}.
Thus $\cal{H}_{\tw}^{\e}(\cal{A})$ 
has a vector space basis consisting of the elements 
$K_\alpha * [B]$ for $\alpha\in K(\cal{A})$ and $B \in \Iso(\cal{A})$.

\begin{fct}
Assume that  $\cal{A}$ satisfies 
the conditions (a), (b), (c) and (e).
\begin{enumerate}
\item[(1)]
Then the maps
\begin{align}
\nonumber
&\Delta_{\cal{H}^{\e}_{\tw}(\cal{A})} \equiv \Delta
 : \cal{H}_{\tw}^{\e}(\cal{A}) 
         \longrightarrow 
         \cal{H}_{\tw}^{\e}(\cal{A}) \ctp 
         \cal{H}_{\tw}^{\e}(\cal{A}), 
\\
\label{eq:DHetw}
&\Delta([A] * K_\alpha ) :=
  \sum_{B,C} t^{\<\cl{B},\cl{C}\>} 
  \dfrac{\bigl| \Ext^1_{\cal{A}}(B,C)_A \bigr|}
        {\bigl| \Hom_{  \cal{A}}(B,C)   \bigr|}
  \dfrac{a_A}{a_B a_C}
   \bigl( [B] * K_{\widehat{C}+\alpha} \bigr) \otimes 
   \bigl( [C] * K_\alpha \bigr),
\end{align}
and
\begin{align*}
\ep: \cal{H}_{\tw}^{\e}(\cal{A}) 
     \longrightarrow \bb{C}, \qquad
\ep([A] * K_{\alpha}):=\delta_{A,0},
\end{align*} 
define a topological counital coassociative coalgebra structure
on $\cal{H}_{\tw}^{\e}(\cal{A})$.

\item[(2)]
Assume moreover that $\cal{A}$ also satisfies 
the condition (d).
Then the tuple
$(\cal{H}_{\tw}^{\e}(\cal{A}),*,[0],\Delta,\ep)$,
is a topological bialgebra defined over $\bb{C}$.
\end{enumerate}
\end{fct}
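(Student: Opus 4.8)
The plan is to verify all assertions directly on the vector-space basis $\{[A]*K_\alpha \mid A\in\Iso(\cal{A}),\ \alpha\in K(\cal{A})\}$ of $\cal{H}^{\e}_{\tw}(\cal{A})$, reducing everything to the structure already recorded for the ordinary Hall coalgebra in Fact~\ref{fct:Gr} together with routine bookkeeping of the $K$-factors and the powers of $t$. Since $\{[A]*K_\alpha\}$ is a basis, the displayed formulas do define $\Delta$ and $\ep$ as $\bb{C}$-linear maps, and condition~(e) is exactly what makes $\cal{H}(\cal{A})[0]=\bb{C}[0]$ one-dimensional and the completed tensor product $\ctp$ meaningful, so the counit and the word ``topological'' make sense. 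Throughout I would use the renormalised generators $[[A]]:=[A]/a_A$ with $a_A:=|\Aut_{\cal{A}}(A)|$, in which $[[A]]*[[B]]=t^{\<\cl A,\cl B\>}\sum_{C}g^{C}_{A,B}[[C]]$ and, by \eqref{eq:DHetw}, $\Delta([[A]]*K_\alpha)=\sum_{B,C}t^{\<\cl B,\cl C\>}g^{A}_{B,C}\,(a_B a_C/a_A)\,\bigl([[B]]*K_{\cl C+\alpha}\bigr)\otimes\bigl([[C]]*K_\alpha\bigr)$, $g^{C}_{A,B}$ being the Hall number of $\cal{A}$ as in the remark following Fact~\ref{fct:Gr}.

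For part~(1) the counit identities are immediate: applying $\ep\otimes 1$ to $\Delta([[A]]*K_\alpha)$ keeps only the summands with $B\cong 0$, and since $g^{A}_{0,C}=\delta_{A,C}$, $a_0=1$ and $\<\cl 0,\cl C\>=0$, the sum collapses to $[[A]]*K_\alpha$; the identity $(1\otimes\ep)\circ\Delta=\Id$ is symmetric. For the topological coassociativity I would expand $(\Delta\otimes 1)\circ\Delta$ and $(1\otimes\Delta)\circ\Delta$ on $[[A]]*K_\alpha$: both become sums over triples $(B_1,B_2,B_3)$ of $\bigl([[B_1]]*K_{\cl{B_2}+\cl{B_3}+\alpha}\bigr)\otimes\bigl([[B_2]]*K_{\cl{B_3}+\alpha}\bigr)\otimes\bigl([[B_3]]*K_\alpha\bigr)$, the $K$-exponents agreeing because $\cl{\cdot}$ is additive on short exact sequences, the exponents of $t$ agreeing because $\<\cl{B_1},\cl{B_2}\>+\<\cl{B_1}+\cl{B_2},\cl{B_3}\>=\<\cl{B_1},\cl{B_2}+\cl{B_3}\>+\<\cl{B_2},\cl{B_3}\>$ by bilinearity of the Euler form, and the remaining scalar --- on both sides $(a_{B_1}a_{B_2}a_{B_3}/a_A)$ times a sum of a product of two Hall numbers --- matching by the associativity of the Hall numbers $\sum_{M}g^{M}_{B_1,B_2}g^{A}_{M,B_3}=\sum_{R}g^{A}_{B_1,R}g^{R}_{B_2,B_3}$, i.e.\ \eqref{eq:dh:assoc} read inside $\cal{A}$ itself, which uses nothing beyond~(a). (Equivalently this is just the topological coassociativity of Green's $\Delta'$, Fact~\ref{fct:Gr}(1), carried along the $K$-extension.)

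For part~(2), by linearity and the identity $\Delta([C]*K_\gamma)=\Delta([C])*(K_\gamma\otimes K_\gamma)$ (immediate from \eqref{eq:DHetw}), the multiplicativity $\Delta(u*v)=\Delta(u)*\Delta(v)$ on arbitrary basis elements $u=[A]*K_\alpha$, $v=[B]*K_\beta$ reduces --- after moving all $K$-factors to the right via $K_\alpha*[B]=t^{(\alpha,\cl B)}[B]*K_\alpha$ --- to two statements. First, $(K_\alpha\otimes K_\alpha)*\Delta([B])=t^{(\alpha,\cl B)}\,\Delta([B])*(K_\alpha\otimes K_\alpha)$, which holds because $K_\alpha$ commutes past a class $[C]$ at the cost of $t^{(\alpha,\cl C)}$ in each tensor slot and, as $g^{B}_{C_1,C_2}\neq 0$ forces $\cl B=\cl{C_1}+\cl{C_2}$, the accumulated twist is $t^{(\alpha,\cl{C_1})+(\alpha,\cl{C_2})}=t^{(\alpha,\cl B)}$ by bilinearity of the symmetrised form \eqref{eq:sym-euler}. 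Second --- and this is the only step with genuine content --- the identity $\Delta([A]*[B])=\Delta([A])*\Delta([B])$ for the product $(x\otimes y)*(z\otimes w)=(x*z)\otimes(y*w)$ on $\cal{H}^{\e}_{\tw}(\cal{A})\ctp\cal{H}^{\e}_{\tw}(\cal{A})$. Expanding the left side by $[[A]]*[[B]]=t^{\<\cl A,\cl B\>}\sum_C g^{C}_{A,B}[[C]]$ followed by $\Delta$, and the right side by $\Delta$ followed by the tensor-square product --- where the shift $K_{\cl C}$ in the first tensor factor of $\Delta([C])$ absorbs exactly the twist produced in commuting the $K$'s past the $[\,\cdot\,]$'s --- and comparing coefficients of each $\bigl([[M]]*K_{\bullet}\bigr)\otimes\bigl([[N]]*K_{\bullet}\bigr)$, one is left with an identity purely among the Hall numbers of $\cal{A}$ and powers of $t$. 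This is Green's formula \cite{Gr}, whose proof rests on the Riedtmann--Peng formula together with the vanishing $\Ext^{i}_{\cal{A}}=0$ for $i\geq 2$ that hypothesis~(d) provides; I would either reproduce that argument or, more economically, quote Fact~\ref{fct:Gr}(2) for $\bigl(\cal{H}(\cal{A}),\diamond,[0],\Delta',\ep\bigr)$ and transport it to the twisted, $K$-extended setting, the role of the $K$-extension being precisely that the braided tensor-square multiplication otherwise needed on $\cal{H}_{\tw}(\cal{A})\otimes\cal{H}_{\tw}(\cal{A})$ becomes the naive one on $\cal{H}^{\e}_{\tw}(\cal{A})\otimes\cal{H}^{\e}_{\tw}(\cal{A})$; see \cite[\S1.5]{S} and \cite[Parts~I,~II]{R2} for full details.

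The hard part is the identity $\Delta([A]*[B])=\Delta([A])*\Delta([B])$ of part~(2): it is the unique place where the hereditary hypothesis~(d) is used, and it is a reformulation of Green's (Riedtmann--Peng) homological identity. Every remaining verification is additivity of $\cl{\cdot}$, bilinearity of $\<\cdot,\cdot\>$ and $(\cdot,\cdot)$, and the associativity of the Hall numbers, none of which requires $\cal{A}$ hereditary.
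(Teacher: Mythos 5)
Your outline is correct, and it is consistent with how the paper treats this statement: the paper gives no proof at all, presenting it as a known Fact (Green's theorem in its twisted, $K$-extended form) with a pointer to \cite[\S 1.5]{S}, so the only substantive step --- the identity $\Delta([A]*[B])=\Delta([A])*\Delta([B])$, i.e.\ Green's formula, the sole place where hypothesis (d) enters --- is legitimately quoted rather than reproved, exactly as you propose, while the counit, coassociativity and $K$-bookkeeping reductions you describe (additivity of $\cl{\cdot}$, bilinearity of $\<\cdot,\cdot\>$ and $(\cdot,\cdot)$, Hall-number associativity) are the standard verifications. One minor point: in quoting Fact~\ref{fct:Gr}(2) one should note that the tensor-square multiplication implicit there is the twisted one, so the ``transport'' to $\cal{H}^{\e}_{\tw}(\cal{A})$ with the naive componentwise product is precisely the absorption of that twist into the $K_{\cl{C}}$-shifts --- which you already make explicit.
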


The relation to Bridgeland's Hall algebra is described by 

\begin{fct}[{\cite[Lemma~4.6]{B}}]
\label{fct:B:lem4.6}
Assume that $\cal{A}$ satisfies the conditions (a) -- (e).
Then there is an embedding of algebras 
\begin{align}
\label{eq:I^e_+}
I^{\e}_+: 
 \bigl( \cal{H}^{\e}_{\tw}(\cal{A}),*,[0]) 
 \longinto 
 \bigl( \cal{DH}(\cal{A}),*,[0])
\end{align}
defined on generators by $[A] \mapsto E_A$ and 
$K_\alpha \mapsto K_{\alpha}$. 
\end{fct}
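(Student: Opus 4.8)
The plan is to check that $I^{\e}_+$ respects the defining relations of $\cal{H}^{\e}_{\tw}(\cal{A})$, so that it is a well-defined homomorphism of $\bb{C}$-algebras, and then to deduce injectivity from the basis of $\cal{DH}(\cal{A})$ recorded in Fact~\ref{fct:basis}. Recall that $\cal{H}^{\e}_{\tw}(\cal{A})$ is obtained from the subalgebra $\cal{H}_{\tw}(\cal{A})$ by adjoining the commuting elements $K_\alpha$, $\alpha\in K(\cal{A})$, subject only to $K_\alpha*K_\beta=K_{\alpha+\beta}$, $K_0=[0]$, and the cross relation $K_\alpha*[B]=t^{(\alpha,\cl{B})}[B]*K_\alpha$. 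By the universal property of this presentation, to produce an algebra homomorphism $I^{\e}_+$ it suffices to supply an algebra homomorphism out of $\cal{H}_{\tw}(\cal{A})$, an algebra homomorphism out of the group algebra of $K(\cal{A})$, and to check that the two are compatible with the cross relation. The first is exactly Fact~\ref{fct:B:lem4.3}, which says $[A]\mapsto E_A$ is an algebra embedding $\cal{H}_{\tw}(\cal{A})\longinto\cal{DH}(\cal{A})$. The second is immediate: $K_\alpha\mapsto K_\alpha$ is an algebra homomorphism by \eqref{eq:KK}, with $K_0=[0]$ the unit.

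The compatibility is the only point needing a short calculation. One has to verify $K_\alpha*E_B=t^{(\alpha,\cl{B})}E_B*K_\alpha$ in $\cal{DH}(\cal{A})$. Writing $E_B=t^{\<P_B,B\>}K_{-\cl{P_B}}*[\bu{{C_B}}]$ as in \eqref{eq:E_A}, I would commute $K_\alpha$ past $K_{-\cl{P_B}}$ using \eqref{eq:KK} and past $[\bu{{C_B}}]$ using \eqref{eq:KM}, picking up a factor $t^{(\alpha,\gamma)}$ where $\gamma$ is the class of $\bu{{C_B}}$ in $K(\cal{A})$. By \eqref{eq:C_A} the degree-$0$ and degree-$1$ parts of $\bu{{C_B}}$ are $Q_B$ and $P_B$, so $\gamma=\cl{Q_B}-\cl{P_B}$, and the minimal projective resolution $0\to P_B\to Q_B\to B\to 0$ gives $\cl{Q_B}-\cl{P_B}=\cl{B}$. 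Hence $\gamma=\cl{B}$, which is precisely the factor demanded by the cross relation, and $I^{\e}_+$ is a well-defined $\bb{C}$-algebra homomorphism.

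For injectivity, recall that $\cal{H}^{\e}_{\tw}(\cal{A})$ has the vector-space basis $\{K_\alpha*[B]\}$, equivalently $\{[B]*K_\alpha \mid B\in\Iso(\cal{A}),\ \alpha\in K(\cal{A})\}$, and $I^{\e}_+([B]*K_\alpha)=E_B*K_\alpha$. Since $E_0=F_0=[0]$ (the minimal resolution of $0$ is trivial) and $K_0^*=[0]$, we have $E_B*K_\alpha=E_B*K_\alpha*K_0^{*}*F_0$, which is one of the basis elements of $\cal{DH}(\cal{A})$ listed in Fact~\ref{fct:basis}; distinct pairs $(B,\alpha)$ give distinct such elements, so $I^{\e}_+$ is injective and therefore the claimed embedding. (One checks in passing that it also intertwines the counits, $\ep(E_B*K_\alpha)=\delta_{B,0}=\ep([B]*K_\alpha)$, though this is not part of the assertion.)

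The substantive input here is Fact~\ref{fct:B:lem4.3}, which I would not reprove: its proof is the delicate part, amounting to showing that $E_A*E_B$ reproduces the twisted Hall product $[A]*[B]$, and this requires decomposing the extensions of $\bu{{C_B}}$ by $\bu{{C_A}}$ in $\cal{C}(\cal{P})$ along the lines of Fact~\ref{fct:B:lem4.2} and matching the resulting structure constants with those of $\cal{H}_{\tw}(\cal{A})$. Granting that together with Fact~\ref{fct:basis}, the remaining work — the $K$-twist bookkeeping carried out above — is routine, and no further obstacle arises.
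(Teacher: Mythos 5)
The paper offers no proof of this statement at all---it is imported verbatim as a Fact from \cite[Lemma~4.6]{B}---so there is nothing internal to compare your route against; judging the reconstruction on its own terms, the homomorphism half is correct and is the natural argument: by the presentation of $\cal{H}^{\e}_{\tw}(\cal{A})$ it suffices to combine Fact~\ref{fct:B:lem4.3} with the single cross-relation check $K_\alpha * E_B = t^{(\alpha,\cl{B})}\, E_B * K_\alpha$, and your computation of this via \eqref{eq:KK}, \eqref{eq:KM}, \eqref{eq:E_A} and $\cl{\bu{{C_B}}}=\cl{Q_B}-\cl{P_B}=\cl{B}$ is accurate.

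The injectivity half, however, is circular as written. You invoke Fact~\ref{fct:basis}, but the paper states that fact precisely as a corollary of \cite[Lemmas~4.6, 4.7]{B}, i.e.\ of the very embedding you are proving (together with its $*$-companion); in Bridgeland's development the linear independence of the elements $E_A * K_\alpha * K_\beta^* * F_B$ comes after, and partly by means of, the embedding of the extended Hall algebra, so deducing the embedding from that basis assumes a consequence of the target. The gap is easy to close with material that does not depend on the hereditary basis: by \eqref{eq:E_A} and \eqref{eq:KM} one has $E_A * K_\alpha = t^{c}\,[\bu{{C_A}}] * K_{\alpha-\cl{P_A}}$ for some integer $c$, and Lemma~\ref{lem:g:basis} (which needs only conditions (a)--(c)) says that the elements $[\bu{N}] * K_\gamma * K_\delta^*$ with $\bu{N}$ having no acyclic direct summand are linearly independent in $\cal{DH}(\cal{A})$. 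Since the minimal complex $\bu{{C_A}}$ has no acyclic direct summand and $H_0(\bu{{C_A}})\cong A$ recovers $A$, distinct pairs $(A,\alpha)$ give distinct such elements (the case $A=0$ giving $K_\alpha$ itself), so the images of the basis $[A]*K_\alpha$ of $\cal{H}^{\e}_{\tw}(\cal{A})$ are linearly independent and $I^{\e}_+$ is injective. With that substitution your argument is complete; the counit remark is fine but, as you say, not part of the assertion.
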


Now we introduce a twisted coproduct on $\cal{DH}(\cal{A})$
using the basis shown in Fact~\ref{fct:B:lem4.2} 
and Fact~\ref{fct:basis}.

\begin{dfn}
\label{dfn:coprod}
Let $\cal{A}$ be an abelian category satisfying
the conditions (a) -- (e),
and let $\chi$ be an arbitrary map from
$\Iso(\cal{C}(\cal{P})) \times \Iso(\cal{C}(\cal{P}))$ 
to $\bb{Z}$ (or $\bb{C}$)
satisfying the condition \eqref{eq:chi}.

Define a $\bb{C}$-linear map
$$
 \Delta_{\chi}: 
 \cal{DH}(\cal{A}) \longrightarrow 
 \cal{DH}(\cal{A}) \mathop{\wh{\otimes}}
 \cal{DH}(\cal{A})
$$
by
$$
 \Delta_{\chi}(
  [\bu{{C_A}} \oplus \bu{{C_B}}^*] 
  * K_{\alpha} * K_{\beta}^*)
 :=
 \Delta_{\chi,\cal{E}^0}([\bu{{C_A}}])
 \Delta^{op}_{\chi,\cal{E}^0}([\bu{{C_B}}])
 * (K_{\alpha} \otimes K_{\alpha})
 * (K_{\beta}^* \otimes K_{\beta}^*).
$$
with
\begin{align*}
 \Delta_{\chi,\cal{E}^0}([\bu{L}]) := 
& \sum_{\bu{M},\bu{N}} 
     t^{ \chi(\bu{M},\bu{N}) } 
     \dfrac{\bigl| W^{ \bu{L} }_{\bu{M},\bu{N}} \bigr|}
           {\bigl|\Hom_{  \cal{C}(\cal{A})}(\bu{M},\bu{N}) \bigr|}
     \dfrac{a_{\bu{L}}}{ a_{\bu{M}} a_{\bu{N}} }
         \bigl( K_{\cl{N_0}} * [\bu{M}] \bigr) \otimes 
         \bigl( [\bu{N}] * K_{\cl{M_1}} \bigr),
\\
 \Delta^{op}_{\chi,\cal{E}^0}([\bu{L}]) := 
& \sum_{\bu{M},\bu{N}} 
     t^{ \chi(\bu{M},\bu{N}) } 
     \dfrac{\bigl| W^{ \bu{L} }_{\bu{M},\bu{N}} \bigr|}
           {\bigl|\Hom_{  \cal{C}(\cal{A})}(\bu{M},\bu{N}) \bigr|}
     \dfrac{a_{\bu{L}}}{ a_{\bu{M}} a_{\bu{N}} }
         \bigl( K^*_{\cl{N_1}} * [\bu{M}] \bigr) \otimes 
         \bigl( [\bu{N}] * K^*_{\cl{M_0}} \bigr),
\end{align*}
where $W^{\bu{L}}_{\bu{M},\bu{N}}$ denotes 
the set of all conflations $ \bu{N} \to \bu{L} \to \bu{M}$ 
in $\cal{E}_0$,
and we used the multiplication
$$
 (x \otimes y) * (z \otimes w)
 = (x * z) \otimes (y * w)
$$
on the tensor space 
$\cal{DH}(\cal{A}) \mathop{\wh{\otimes}} \cal{DH}(\cal{A})$.
\end{dfn}

Then we have 

\begin{prop}
For an abelian category $\cal{A}$ satisfying
the conditions (a), (b), (c) and (e),
the tuple $(\cal{DH}(\cal{A}),\Delta_{\chi},\ep)$ is a 
topological coassociative coalgebra.
\end{prop}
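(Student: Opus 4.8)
The plan is to establish coassociativity by a computation parallel to the ones behind Lemma~\ref{lem:coassoc:hcp} and Theorem~\ref{thm:coassoc}, now carrying along the operators $K_\gamma$, $K_\gamma^*$ that enter the definition of $\Delta_\chi$ but are absent from the untwisted $\Delta'_\chi$. Two remarks trim the work. First, from Definition~\ref{dfn:coprod} directly one has $\Delta_\chi(K_\gamma)=K_\gamma\otimes K_\gamma$, $\Delta_\chi(K_\gamma^*)=K_\gamma^*\otimes K_\gamma^*$, and more generally $\Delta_\chi(x*K_\gamma)=\Delta_\chi(x)*(K_\gamma\otimes K_\gamma)$, $\Delta_\chi(x*K_\gamma^*)=\Delta_\chi(x)*(K_\gamma^*\otimes K_\gamma^*)$ for every $x$, where $*$ on $\cal{DH}(\cal{A})\ctp\cal{DH}(\cal{A})$ is the componentwise product; hence the coassociativity constraint is compatible with right multiplication by the $K_\gamma$ and the $K_\gamma^*$, and it suffices to verify it on the elements $[\bu{{C_A}}\oplus\bu{{C_B}}^*]$, that is, on $\Delta_\chi([\bu{{C_A}}\oplus\bu{{C_B}}^*])=\Delta_{\chi,\cal{E}^0}([\bu{{C_A}}])\,\Delta^{op}_{\chi,\cal{E}^0}([\bu{{C_B}}^*])$. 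Second, $\cal{C}(\cal{P})$ is graded by $K(\cal{A})\times K(\cal{A})$ via $\bu{M}\mapsto(\cl{M_0},\cl{M_1})$, additively along conflations, and this grading extends to $\cal{DH}(\cal{A})$ with $K_\gamma$ and $K_\gamma^*$ in bidegree $(\gamma,\gamma)$; the maps $\Delta'_{\chi,\cal{E}_0}$, $\Delta_{\chi,\cal{E}^0}$ and $\Delta^{op}_{\chi,\cal{E}^0}$ all respect it. Comparing the defining formulas then shows that the term indexed by a pair $(\bu{M},\bu{N})$ coming from a conflation $\bu{N}\to\bu{L}\to\bu{M}$ in $\cal{E}_0$ occurs in $\Delta_{\chi,\cal{E}^0}([\bu{L}])$ with the same scalar coefficient as in $\Delta'_{\chi,\cal{E}_0}([\bu{L}])$, but with $[\bu{M}]\otimes[\bu{N}]$ replaced by $(K_{\cl{N_0}}*[\bu{M}])\otimes([\bu{N}]*K_{\cl{M_1}})$ --- and with $K^*_{\cl{N_1}},K^*_{\cl{M_0}}$ in place of $K_{\cl{N_0}},K_{\cl{M_1}}$ in the case of $\Delta^{op}_{\chi,\cal{E}^0}$.

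I would then expand both $(\Delta_\chi\otimes\Id)\Delta_\chi$ and $(\Id\otimes\Delta_\chi)\Delta_\chi$ on $[\bu{{C_A}}\oplus\bu{{C_B}}^*]$ as in the proof of Lemma~\ref{lem:coassoc:hcp}: each becomes a sum indexed by iterated conflations in $\cal{E}_0$, with coefficients assembled from the structure constants $w^{\bu{L}}_{\bu{M},\bu{N}}$ of \eqref{eq:w}, from $t^{\chi}$, and from the inserted operators $K_\gamma,K_\gamma^*$; at each recursion step a product $[\bu{{C_{A'}}}]*[\bu{{C_{B'}}}^*]$ appearing in a first tensor slot is rewritten in the basis of Fact~\ref{fct:B:lem4.2} before $\Delta_\chi$ is applied again. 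Using \eqref{eq:KK} and \eqref{eq:KM} to move and collect the inserted operators, the required equality reduces to three ingredients already at our disposal: the associativity of the Hall algebra $(\cal{H}(\cal{C}(\cal{P})),*_{\cal{E}_0})$ of the exact category $(\cal{C}(\cal{P}),\cal{E}_0)$ (Fact~\ref{fct:exact}), the $\cal{E}_0$-analogue of \eqref{eq:dh:assoc}, which accounts for the combinatorial skeleton; the cocycle condition \eqref{eq:chi} on $\chi$, which accounts for the $t^{\chi}$-factors exactly as in Lemma~\ref{lem:coassoc:hcp}; and the additivity of the $K(\cal{A})^2$-bidegree along nested conflations, which makes the subscripts $\cl{N_0},\cl{M_1},\dots$ of the inserted operators match on the two sides. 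The $\Delta^{op}_{\chi,\cal{E}^0}$-part for $\bu{{C_B}}^*$ is strictly parallel and is most conveniently obtained by applying the shift involution $*$, which preserves $\cal{E}_0$, interchanges $K_\gamma$ with $K_\gamma^*$, and swaps the two graded components. Finally the counit axioms are verified directly, as in Theorem~\ref{thm:coassoc}: a conflation $\bu{N}\to\bu{{C_A}}\to\bu{M}$ (resp.\ $\bu{N}\to\bu{{C_B}}^*\to\bu{M}$) with $\bu{M}$ acyclic is split, and since $\bu{{C_A}}$ (resp.\ $\bu{{C_B}}^*$) has no acyclic summand this forces $\bu{M}=0$; hence $(\ep\otimes\Id)\Delta_\chi$ and $(\Id\otimes\ep)\Delta_\chi$ collapse to the identity.

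The step I expect to be the genuine obstacle is the one isolated above: in Theorem~\ref{thm:coassoc} the two iterated coproducts coincided once \eqref{eq:dh:assoc} and \eqref{eq:chi} had been invoked, whereas here the $K_\gamma,K_\gamma^*$ insertions produce further $t$-powers each time they are commuted past a complex generator by \eqref{eq:KM}, and one must check that these powers, together with the subscripts of the operators, agree between the two iterations --- equivalently, that the twisting datum read off from the bidegree is a $2$-cocycle. I expect this to come down to the additivity of the bidegree and the relations \eqref{eq:KK}, \eqref{eq:KM}; granting it, the argument is the same as for Lemma~\ref{lem:coassoc:hcp} and Theorem~\ref{thm:coassoc}.
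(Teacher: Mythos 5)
Your proposal is correct and follows essentially the same route as the paper's own proof: both reduce the iterated-coproduct identity to the associativity $\sum_{\bu{M}} w^{\bu{L}}_{\bu{M},\bu{N}} w^{\bu{M}}_{\bu{P},\bu{Q}}=\sum_{\bu{R}} w^{\bu{L}}_{\bu{P},\bu{R}} w^{\bu{R}}_{\bu{Q},\bu{N}}$ of the exact-category Hall algebra, the cocycle condition \eqref{eq:chi} on $\chi$, and the additivity $\cl{M_i}=\cl{P_i}+\cl{Q_i}$, $\cl{R_i}=\cl{Q_i}+\cl{N_i}$ along nested conflations to match the inserted $K$-factors via \eqref{eq:KK} and \eqref{eq:KM}. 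You spell out the reduction to $[\bu{{C_A}}\oplus\bu{{C_B}}^*]$ and the $\Delta^{op}$/counit parts more explicitly than the paper, which treats only $\bu{L}=\bu{{C_A}}$ and declares the other cases similar, but the substance is identical.
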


\begin{proof}
Let us show 
$(\Delta \otimes 1)\circ\Delta([[\bu{L}]])
=(1 \otimes \Delta)\circ\Delta([[\bu{L}]])$
for $\bu{L}=\bu{{C_A}}$.
The other cases are similar.
As in the proof of Lemma~\ref{lem:coassoc:hcp},
it is equivalent to the formula
\begin{align*}
&\sum_{\bu{M}}
  t^{ \chi(\bu{M},\bu{N}) + \chi(\bu{P},\bu{Q})}
  w^{\bu{L}}_{\bu{M},\bu{N}} w^{\bu{M}}_{\bu{P},\bu{Q}}
  \dfrac{ a_{\bu{N}} a_{\bu{P}} a_{\bu{Q}} }{ a_{\bu{L}} }
\\
&\phantom{\sum_{\bu{M}} t}
  \bigl( K_{\cl{N_0}} * K_{\cl{Q_0}} * [[\bu{P}]]   \bigr) \otimes 
  \bigl( K_{\cl{N_0}} * [[\bu{Q}]]   * K_{\cl{P_1}} \bigr) \otimes 
  \bigl( [[\bu{N}]]   * K_{\cl{M_1}} \bigr)
\\
=
&\sum_{\bu{R}}
  t^{  \chi(\bu{P},\bu{R}) + \chi(\bu{Q},\bu{N}) }
  w^{\bu{L}}_{\bu{P},\bu{R}}  w^{\bu{R}}_{\bu{Q},\bu{N}}
  \dfrac{ a_{\bu{N}} a_{\bu{P}} a_{\bu{Q}} }{ a_{\bu{L}} }
\\
&\phantom{\sum_{\bu{M}} t}
  \bigl( K_{\cl{R_0}} * [[\bu{P}]]                  \bigr) \otimes 
  \bigl( K_{\cl{N_0}} * [[\bu{Q}]]   * K_{\cl{P_1}} \bigr) \otimes 
  \bigl( [[\bu{N}]]   * K_{\cl{Q_1}} * K_{\cl{P_1}} \bigr)
\end{align*}
with 
$$
w^{\bu{L}}_{\bu{M},\bu{N}}:=
     \dfrac{\bigl| W^{\bu{L}}_{\bu{M},\bu{N}} \bigr|}
           {\bigl| \Hom_{  \cal{C}(\cal{A})}(\bu{M},\bu{N}) \bigr|}
     \dfrac{\bigl| \Aut_{\cal{C}(\cal{A})}(\bu{M}) \bigr|
            \bigl| \Aut_{\cal{C}(\cal{A})}(\bu{N}) \bigr|}
           {\bigl| \Aut_{\cal{C}(\cal{A})}(\bu{L}) \bigr|}
$$
for any tuple $(\bu{L},\bu{N},\bu{P},\bu{Q})$ of 
objects in $\cal{C}(\cal{P})$ 
satisfying the condition 
$\cl{M_i}=\cl{P_i}+\cl{Q_i}$ and $\cl{R_i}=\cl{Q_i}+\cl{N_i}$ 
for $i=0,1$.
This condition tells us that the coassociativity follows 
from the formula
$\sum_{\bu{M}}
  w^{\bu{L}}_{\bu{M},\bu{N}} w^{\bu{M}}_{\bu{P},\bu{Q}}
=\sum_{\bu{R}} 
 w^{\bu{L}}_{\bu{P},\bu{R}} w^{\bu{R}}_{\bu{Q},\bu{N}}$,
which is the associativity 
of $\cal{H}(\cal{C}(\cal{P}),\cal{E}_0)$ .
\end{proof}

Our claim is 

\begin{thm}
\label{thm:embedding}
For an abelian category $\cal{A}$ satisfying the conditions (a) -- (e),
the map $I^{\e}_+$ \eqref{eq:I^e_+} defines an embedding of 
$\bb{C}$-coalgebras
$$
I^{\e}_+: 
 \bigl( \cal{H}^{\e}_{\tw}(\cal{A}),\Delta_{\cal{H}(\cal{A})},\ep \bigr) 
  \longinto 
 \bigl( \cal{DH}(\cal{A}),\Delta_{\chi_0},\ep \bigr).
$$
Here we use 
$$
 I^{\e}_+(x \otimes y) := I^{\e}_+(x) \otimes I^{\e}_+(y)
$$
on the tensor product space 
$\cal{H}^{\e}_{\tw}(\cal{A}) \ctp \cal{H}^{\e}_{\tw}(\cal{A})$
and 
$$
\chi_0(\bu{M},\bu{N}) := -\<\bu{N},\bu{M}\>.
$$
\end{thm}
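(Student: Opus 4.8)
The plan is to verify that $I^{\e}_+$ intertwines $\Delta$ on $\cal{H}^{\e}_{\tw}(\cal{A})$ with $\Delta_{\chi_0}$ on $\cal{DH}(\cal{A})$, i.e.\ that $\Delta_{\chi_0}\circ I^{\e}_+ = (I^{\e}_+\otimes I^{\e}_+)\circ\Delta$ on a basis of $\cal{H}^{\e}_{\tw}(\cal{A})$. Since $I^{\e}_+$ is already known to be an algebra embedding (Fact~\ref{fct:B:lem4.6}) and both $\Delta$ and $\Delta_{\chi_0}$ are multiplicative in a suitable sense, it suffices to check the identity on the algebra generators, namely on $[A]$ for $A\in\Iso(\cal{A})$ and on the $K_\alpha$. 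The $K_\alpha$ case is immediate: $I^{\e}_+(K_\alpha)=K_\alpha$, while $\Delta_{\chi_0}$ applied to $[\bu{K_P}]*[\bu{K_Q}]^{-1}$ unwinds (using the definition on the basis $[\bu{N}]*K_\gamma*K_\delta^*$ with $\bu{N}=\bu{0}$) to $K_\alpha\otimes K_\alpha$, matching the grouplike behaviour forced by the defining relations of $\cal{H}^{\e}_{\tw}(\cal{A})$. So the real content is the generator $[A]\mapsto E_A = t^{\<P_A,A\>}K_{-\cl{P_A}}*[\bu{C_A}]$.

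The central computation is therefore to expand $\Delta_{\chi_0}(E_A)$ and show it equals $(I^{\e}_+\otimes I^{\e}_+)(\Delta([A]))$, where the right side, by \eqref{eq:DHetw}, is
\begin{align*}
\sum_{B,C} t^{\<\cl{B},\cl{C}\>}\,g^{A}_{B,C}\,\frac{a_B a_C}{a_A}
 \bigl(E_B * K_{\cl{C}}\bigr)\otimes E_C
\end{align*}
(after the usual renormalisation). On the left, $\Delta_{\chi_0}(E_A)= t^{\<P_A,A\>}(K_{-\cl{P_A}}\otimes K_{-\cl{P_A}})\,\Delta_{\chi_0,\cal{E}^0}([\bu{C_A}])$, and by Definition~\ref{dfn:coprod} the latter is a sum over conflations $\bu{N}\to\bu{C_A}\to\bu{M}$ in $\cal{E}_0$, weighted by $t^{\chi_0(\bu{M},\bu{N})}w^{\bu{C_A}}_{\bu{M},\bu{N}}$ and giving the tensor factor $(K_{\cl{N_0}}*[\bu{M}])\otimes([\bu{N}]*K_{\cl{M_1}})$. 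The key structural input is that $\bu{C_A}$ has the shape $P_A \xrightarrow{f_A} Q_A$ with zero differential in the other direction; I would analyse which subcomplexes $\bu{N}\subset\bu{C_A}$ give conflations in $\cal{E}_0$. Using the homology description $H_1(\bu{C_A})=0$, $H_0(\bu{C_A})=A$ and the condition \eqref{eq:E_0:cond}, one sees the relevant quotients $\bu{M}$ have the form $\bu{C_B}$ and $\bu{N}$ has the form $\bu{C_C}$ (possibly up to acyclic summands, which are then absorbed by the localisation/the $K$-factors), so that the sum over $\cal{E}_0$-conflations reorganises into a sum over pairs $(B,C)$ with $g^A_{B,C}$ counting the quotients $A/C'\cong B$. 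The bookkeeping then has to match: (i) the combinatorial factor $w^{\bu{C_A}}_{\bu{C_B},\bu{C_C}}$ against $g^A_{B,C}\,a_B a_C/a_A$, using that $\Aut$ of $\bu{C_A}$ relates to $\Aut_{\cal{A}}(A)$ times automorphisms of the resolution (invoking \cite[Lemma~4.1]{B} and the minimality), (ii) the power $t^{\chi_0(\bu{M},\bu{N})+\<P_A,A\>}$ together with the grading shifts $K_{-\cl{P_A}}$, $K_{\cl{N_0}}$, $K_{\cl{M_1}}$ against $t^{\<\cl{B},\cl{C}\>}$ and the grouplike $K_{\cl{C}}$ appearing on $E_B*K_{\cl{C}}$ on the $\cal{H}^{\e}_{\tw}$ side, remembering $\cl{C_B}=\cl{Q_B}-\cl{P_B}=\cl{B}$ and that $\cl{N_0},\cl{M_1}$ are the projective pieces of the resolutions of $C$ and $B$ respectively. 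Here the identity $\chi_0(\bu{M},\bu{N})=-\<\bu{N},\bu{M}\>=-\<N_0,M_0\>-\<N_1,M_1\>$, combined with the definitions \eqref{eq:E_A} of $E_B, E_C$, is exactly engineered so the exponents collapse to $\<\cl{B},\cl{C}\>$; I would carry out this exponent arithmetic carefully, since this is the step most prone to sign and index errors.

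The main obstacle I anticipate is precisely this matching of the $\cal{E}_0$-conflation count of subcomplexes of $\bu{C_A}$ with the Hall number $g^A_{B,C}$ in $\cal{A}$, i.e.\ showing the "complex-level" extension count equals the "object-level" one. Concretely, one must show that a subcomplex $\bu{N}\subset\bu{C_A}$ yielding an $\cal{E}_0$-conflation is, after discarding acyclic direct summands, the minimal-resolution complex $\bu{C_C}$ of a subobject $C'\subset A$, and that this sets up a bijection (appropriately weighted by automorphism groups and $\Hom$-spaces). This should follow from the hereditary hypothesis (d): in that case every subobject and quotient of $A$ again has projective dimension $\le 1$, so its minimal resolution embeds into a resolution of $A$, and the syzygy sequence is controlled. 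Once this bijection and the compatible automorphism/\,$\Hom$ counts are in place, the remaining verification is routine algebra with the relations \eqref{eq:KM}, \eqref{eq:KK}, and the theorem follows; the counit compatibility $\ep\circ I^{\e}_+ = \ep$ is immediate from the definitions since $E_A$ has zero "$\bu{N}=\bu{0}$" component exactly when $A\neq 0$.
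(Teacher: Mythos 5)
Your route is the same as the paper's: reduce to the single identity $\Delta_{\chi_0}(E_A)=(I^{\e}_+\otimes I^{\e}_+)\Delta([A])$, classify the $\cal{E}_0$-conflations with middle term $\bu{{C_A}}$, and match coefficients using \eqref{eq:KM}. But the write-up has gaps at exactly the points where the content lies. First, the reduction to generators cannot be justified by saying that ``$\Delta_{\chi_0}$ is multiplicative'': that multiplicativity is Theorem~\ref{thm:main1}, which is deduced \emph{from} the present theorem, so the appeal is circular. (The reduction is still fine, but for a different reason: on the basis $[\bu{{C_A}}]*K_\alpha*K_\beta^*$ the coproduct $\Delta_{\chi_0}$ treats the $K$-factors group-likely \emph{by definition}, as does formula \eqref{eq:DHetw}, so only the $E_A$-identity needs checking.) Second, your treatment of possible acyclic direct summands --- that they are ``absorbed by the localisation/the $K$-factors'' --- is not an argument and is not what happens. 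A conflation with $\bu{N}=\bu{{C_D}}\oplus\bu{{K_P}}$, say, would produce extra terms in $\Delta_{\chi_0,\cal{E}_0}([\bu{{C_A}}])$ with no counterpart in $(I^{\e}_+\otimes I^{\e}_+)\Delta([A])$, and nothing in the localisation removes them; what is needed, and what the paper proves (Lemma~\ref{lem:NCM}), is that such summands \emph{do not occur}: the snake lemma gives $\Ker(d^N_1)=0$, Fact~\ref{fct:B:lem4.2} then gives $\bu{N}=\bu{{C_D}}\oplus\bu{K}$ with $\bu{K}$ acyclic, and the \emph{minimality} of the resolution $\bu{{C_A}}$ forces $\bu{K}=0$, while the case $H_1(\bu{M})\neq 0$ is eliminated by the $\cal{E}_0$-condition \eqref{eq:E_0:cond}. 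Your proposed justification via heredity of subobjects does not replace this minimality argument.

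Third, the quantitative heart --- matching $w^{\bu{{C_A}}}_{\bu{{C_B}},\bu{{C_D}}}$ against $g^A_{B,D}$ and collapsing the exponents --- is only announced, and it is not a tautology. One has $w^{\bu{{C_A}}}_{\bu{{C_B}},\bu{{C_D}}}=g^{\bu{{C_A}}}_{\bu{{C_B}},\bu{{C_D}}}$ (every exact sequence between these complexes lies in $\cal{E}_0$), but the complex-level and object-level Hall numbers differ by a nontrivial factor: $g^{\bu{{C_A}}}_{\bu{{C_B}},\bu{{C_D}}}=t^{2\<Q_D,P_B\>}\,g^A_{B,D}$, where $t^{2\<Q_D,P_B\>}=\bigl|\Hom_{\cal{A}}(Q_D,P_B)\bigr|$; establishing this uses the $\Hom$-counts of Lemma~\ref{lem:aa}, the splittings $\cl{P_A}=\cl{P_B}+\cl{P_D}$, $\cl{Q_A}=\cl{Q_B}+\cl{Q_D}$, and heredity via $\Ext^1_{\cal{A}}(Q_D,P_B)=0$ (Lemma~\ref{lem:ggt}). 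This factor is precisely what makes the exponent bookkeeping with $\chi_0$, $\<P_A,A\>$ and the $K$-commutations close up to $t^{\<B,D\>}$; without identifying it, the assertion that the exponents ``collapse'' is unsupported. So the plan follows the paper's proof in outline, but as written it omits, or misstates the mechanism for, the two decisive steps (exclusion of acyclic summands by minimality, and the $t^{2\<Q_D,P_B\>}$ comparison of Hall numbers), and these must be supplied for the proof to be complete.
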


\begin{rmk}
\begin{enumerate}
\item
As mentioned in Remark~\ref{rmk:coalg} (1),
$\chi=\chi_0$ satisfies the condition \eqref{eq:chi}
imposed on the map $\chi$.

\item
As in Remark~\ref{rmk:coalg}, 
one can rewrite the coproduct $\Delta_{\chi_0}$ using the notation 
$[[\bu{L}]] = [\bu{L}]/a_{\bu{L}}$ into the form 
\begin{align}
\label{eq:delta'}
\Delta([[\bu{L}]])
 = \sum_{\bu{M},\bu{N}}  
    t^{ -\<\bu{N},\bu{M}\>' }
    w^{\bu{L}}_{\bu{M},\bu{N}} 
    \dfrac{ a_{\bu{M}} a_{\bu{N}} }{ a_{\bu{L}} }
     \bigl( K_{\cl{N_0}} * [[\bu{M}]]  \bigr) \otimes 
     \bigl( [[\bu{N}]] * K_{\cl{M_1}} \bigr).
\end{align}
\end{enumerate}
\end{rmk}

The proof of the theorem is presented in the next subsection.
Since our construction is symmetric with respect to the involution $*$,
we also have 

\begin{thm}
\label{thm:embedding2}
For an abelian category $\cal{A}$ satisfying the conditions (a) -- (e),
the map 
$$
 I^{\e}_-: 
 \cal{H}^{\e}_{\tw}(\cal{A}) \longinto \cal{DH}(\cal{A}),
 \qquad
 [B] \longmapsto F_B,\ K_{\beta} \longmapsto K^*_\beta
$$
defines an embedding of $\bb{C}$-coalgebras
$$
I^{\e}_-: 
 \bigl( \cal{H}^{\e}_{\tw}(\cal{A}),\Delta_{\cal{H}(\cal{A})},\ep \bigr) 
  \longinto 
 \bigl( \cal{DH}(\cal{A}),\Delta_{\chi_0},\ep \bigr).
$$
\end{thm}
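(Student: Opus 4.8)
The plan is to deduce Theorem~\ref{thm:embedding2} from Theorem~\ref{thm:embedding} by means of the shift involution $*$, which interchanges the ``$E$-side'' and the ``$F$-side'' of the whole construction. First I would record that $*\colon\cal{C}(\cal{A})\to\cal{C}(\cal{A})$ is an exact autoequivalence preserving $\Ext^1$-spaces, acyclic complexes and the form $\<\cdot,\cdot\>'$ (since $\<\bu{M}^*,\bu{N}^*\>'=\<\bu{M},\bu{N}\>'$), so it induces an involutive $\bb{C}$-algebra automorphism $\sigma$ of $\cal{DH}(\cal{A})$ with $\sigma([\bu{M}])=[\bu{M}^*]$, $\sigma(K_\alpha)=K^*_\alpha$ and $\sigma(K^*_\alpha)=K_\alpha$. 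By Definition~\ref{dfn:B:4}(3) we have $\sigma(E_A)=E_A^*=F_A$, so $\sigma\circ I^{\e}_+$ and $I^{\e}_-$ agree on the generators $[A]$ and $K_\alpha$ of $\cal{H}^{\e}_{\tw}(\cal{A})$; both being algebra homomorphisms (Fact~\ref{fct:B:lem4.6}), $I^{\e}_-=\sigma\circ I^{\e}_+$ on all of $\cal{H}^{\e}_{\tw}(\cal{A})$. Since $\ep\circ\sigma=\ep$ is clear, it then suffices to prove that $\sigma$ is an automorphism of the coalgebra $(\cal{DH}(\cal{A}),\Delta_{\chi_0},\ep)$; granting this, $I^{\e}_-=\sigma\circ I^{\e}_+$ is the composite of the coalgebra embedding of Theorem~\ref{thm:embedding} with a coalgebra automorphism, hence a coalgebra embedding, and injectivity is inherited from $I^{\e}_+$.

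To establish $(\sigma\ctp\sigma)\circ\Delta_{\chi_0}=\Delta_{\chi_0}\circ\sigma$ I would argue in two steps. First, $*$ restricts to an exact autoequivalence of $(\cal{C}(\cal{P}),\cal{E}_0)$: the point is only that $*$ shifts homology by one degree, so that the defining condition~\eqref{eq:E_0:cond} of $\cal{E}_0$ is invariant under applying $*$ to $\bu{M}$ and $\bu{N}$ simultaneously. Combined with the $*$-invariance of $\chi_0$ (i.e.\ $\chi_0(\bu{M}^*,\bu{N}^*)=\chi_0(\bu{M},\bu{N})$), this yields, upon feeding $\sigma\ctp\sigma$ into the defining sum of $\Delta_{\chi_0,\cal{E}_0}([\bu{L}])$ and relabelling the conflations via $*$, the identities $(\sigma\ctp\sigma)\Delta_{\chi_0,\cal{E}_0}([\bu{L}])=\Delta^{op}_{\chi_0,\cal{E}_0}([\bu{L}^*])$ and $(\sigma\ctp\sigma)\Delta^{op}_{\chi_0,\cal{E}_0}([\bu{L}])=\Delta_{\chi_0,\cal{E}_0}([\bu{L}^*])$; here one uses that $*$ swaps the two graded components, matching the exponents $\cl{N_0},\cl{M_1}$ appearing in $\Delta_{\chi_0,\cal{E}_0}$ with the exponents $\cl{N_1},\cl{M_0}$ appearing in $\Delta^{op}_{\chi_0,\cal{E}_0}$. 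In other words, $*$ interchanges the two half-coproducts of Definition~\ref{dfn:coprod}. Second, I would combine this with the action of $\sigma$ on the basis of Fact~\ref{fct:B:lem4.2}: $\sigma$ sends $x=[\bu{{C_A}}\oplus\bu{{C_B}}^*]*K_\alpha*K_\beta^*$ to $[\bu{{C_B}}\oplus\bu{{C_A}}^*]*K_\beta*K_\alpha^*$, swapping the $\bu{{C}}$-type and $\bu{{C}}^*$-type summands and the roles of the exponents; expanding both $\Delta_{\chi_0}(\sigma(x))$ and $(\sigma\ctp\sigma)\Delta_{\chi_0}(x)$ by Definition~\ref{dfn:coprod} and using the multiplicativity of $\sigma\ctp\sigma$ on $\cal{DH}(\cal{A})\ctp\cal{DH}(\cal{A})$, the half-coproduct interchange above, and the commutation relations~\eqref{eq:KK} among the $K$'s and $K^*$'s, one brings the two expressions into the same form.

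The main obstacle I anticipate is precisely this last matching: after $\sigma$ is applied, the two half-coproducts $\Delta_{\chi_0,\cal{E}_0}$ and $\Delta^{op}_{\chi_0,\cal{E}_0}$ reappear in the order opposite to that prescribed by Definition~\ref{dfn:coprod}, and one must verify that they recombine correctly, which hinges on the $*$-stability of $\cal{E}_0$ together with~\eqref{eq:KK}. In the write-up it may in fact be cleaner to bypass the abstract reduction and instead reproduce the proof of Theorem~\ref{thm:embedding} verbatim, with $F_B$, $K^*_\beta$ and $\Delta^{op}_{\chi_0,\cal{E}_0}$ substituted for $E_A$, $K_\alpha$ and $\Delta_{\chi_0,\cal{E}_0}$; the legitimacy of this substitution is exactly the $*$-equivariance recorded in the previous paragraph, after which the generator-level computation proceeds symmetrically.
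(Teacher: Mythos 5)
Your proposal is correct and takes essentially the same route as the paper, which offers no argument beyond the remark that the construction is symmetric under the involution $*$; your automorphism $\sigma$ (with $\sigma([\bu{M}])=[\bu{M}^*]$, $\sigma(K_\alpha)=K_\alpha^*$, $\sigma(E_A)=F_A$), together with the $*$-stability of $\cal{E}_0$ and the $*$-invariance of $\chi_0$ and $\<\cdot,\cdot\>'$, is precisely the formalization of that remark. The one step you rightly single out as delicate --- how $\sigma\ctp\sigma$ interchanges $\Delta_{\chi_0,\cal{E}_0}$ and $\Delta^{op}_{\chi_0,\cal{E}_0}$ --- is indeed where the bookkeeping is subtle (the placement of $*$ on the output terms $[\bu{M}],[\bu{N}]$ in Definition~\ref{dfn:coprod} must be read so that $\Delta^{op}_{\chi_0,\cal{E}_0}([\bu{{C_B}}])$ lands in the span of the $F$-type basis elements of Fact~\ref{fct:basis}), and your fallback of repeating the proof of Theorem~\ref{thm:embedding} verbatim with all objects starred is the safe way to close that step.
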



\subsection{Proof of Theorem~\ref{thm:embedding}}

We begin with introducing several lemmas.
Assume that $\cal{A}$ satisfies the conditions (a) -- (e).
In the argument we will use the symbols $P_A,Q_A$ 
introduced in Definition~\ref{dfn:B:4}.

\begin{lem}
\label{lem:NCM}
For an exact sequence
\begin{align}
\label{eq:NCM}
\xymatrix@1{
 0 \ar[r]
&\bu{N} \ar[r]
&\bu{{C_A}} \ar[r]
&\bu{M} \ar[r]
&0
} 
\end{align}
in $\cal{E}_0$
with $A \in \Obj(\cal{A})$,
$\bu{M}$ and $\bu{N}$ are of the form $\bu{{C_{X}}}$
with some $X \in \Obj(\cal{A})$.
Moreover,
if $w^{\bu{{C_A}}}_{\bu{M},\bu{N}} \neq 0$ 
then one can express $\bu{M}=\bu{{C_B}}$, 
$\bu{N}=\bu{{C_D}}$ 
and 
$w^{\bu{{C_A}}}_{\bu{M},\bu{N}}=
 g^{\bu{{C_A}}}_{\bu{{C_B}},\bu{{C_D}}}$.
\end{lem}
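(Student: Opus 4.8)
The plan is to analyze the exact sequence \eqref{eq:NCM} componentwise, using the structure of the complex $\bu{{C_A}}$. Recall that $\bu{{C_A}}$ has the shape $\cpx{P_A}{f_A}{0}{Q_A}$, where $0 \to P_A \xrightarrow{f_A} Q_A \to A \to 0$ is a minimal projective resolution. First I would observe that since $\cal{C}(\cal{P})$ is closed under taking subobjects and quotients that land in $\cal{C}(\cal{P})$, and since every object of $\cal{C}(\cal{P})$ decomposes (by Fact~\ref{fct:B:lem4.2}) as $\bu{{C_X}} \oplus \bu{{C_Y}}^* \oplus \bu{{K_P}} \oplus \bu{{K_Q}}^*$, it suffices to rule out the summands of type $*$ and of type $\bu{K}$ appearing in $\bu{M}$ or $\bu{N}$. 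The homology of $\bu{{C_A}}$ is concentrated in degree $0$, namely $H_0(\bu{{C_A}}) = A$ and $H_1(\bu{{C_A}}) = \Ker f_A = 0$. The long exact sequence in homology associated to \eqref{eq:NCM}, together with $2$-periodicity as in the proof of Proposition~\ref{prop:coalg:E_0}, gives constraints: from $H_1(\bu{{C_A}}) = 0$ we get that $H_1(\bu{N}) \hookrightarrow H_1(\bu{{C_A}}) = 0$, so $H_1(\bu{N}) = 0$, and then $H_0(\bu{N}) = H_1(\bu{M})$ and there is a surjection $H_0(\bu{{C_A}}) \twoheadrightarrow H_1(\bu{N}) = 0$ is automatic while $H_0(\bu{M})$ receives $H_0(\bu{{C_A}}) = A$ up to the image of $H_0(\bu{N})$. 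Combining $H_1(\bu{N}) = 0$ with the $\cal{E}_0$-condition \eqref{eq:E_0:cond} (which says $H_i(\bu{M}) \neq 0 \Rightarrow H_{i+1}(\bu{N}) = 0$) already forces $H_1(\bu{M}) = H_0(\bu{N})$ to satisfy: if $H_0(\bu{M}) \neq 0$ then $H_1(\bu{N}) = 0$ (automatic), and if $H_1(\bu{M}) \neq 0$ then $H_0(\bu{N}) = 0$, hence $H_1(\bu{M}) = 0$ too. So in fact $\bu{M}$ has homology only in degree $0$ and $\bu{N}$ has homology only in degree $0$ as well.

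Next I would use this homological concentration to pin down the shape of $\bu{M}$ and $\bu{N}$. Writing $\bu{M} = \bu{{C_B}} \oplus \bu{{C_{B'}}}^* \oplus \bu{{K_{P}}} \oplus \bu{{K_{Q}}}^*$ via Fact~\ref{fct:B:lem4.2}: the summand $\bu{{C_{B'}}}^*$ contributes homology in degree $1$, and $\bu{{K_P}}, \bu{{K_Q}}^*$ are acyclic. Since $H_1(\bu{M}) = 0$ we get $B' = 0$. I still must rule out the acyclic summands $\bu{{K_P}} \oplus \bu{{K_Q}}^*$; here is where I would exploit minimality of the resolution $\bu{{C_A}}$: its differential $f_A$ has no isomorphism among its matrix entries, so $\bu{{C_A}}$ has no direct summand of the form $\bu{{K_P}}$ with $P \neq 0$ (such a summand would be a "trivial" piece $\cpx{P}{\Id}{0}{P}$ splitting off, contradicting minimality), and visibly no summand of the form $\bu{{K_Q}}^*$ since the degree-$1$-to-degree-$0$ differential of $\bu{{C_A}}$ is zero. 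A subobject or quotient of $\bu{{C_A}}$ in $\cal{C}(\cal{P})$ whose homology is concentrated in degree $0$ must inherit this ``no acyclic summand'' property — the acyclic part of $\bu{M}$ (resp. $\bu{N}$) would have to embed into, or be a quotient of, the acyclic part of $\bu{{C_A}}$, which is $0$; more carefully, one argues via the snake/horseshoe that since the degree-$1$ term $P_A$ injects into $Q_A$ and $A$ has no projective summand (as the resolution is minimal, $Q_A$ has no summand in common with an image of $f_A$), the only way a copy of $\bu{{K_P}}$ or $\bu{{K_P}}^*$ appears in a sub or quotient is with $P = 0$. Hence $\bu{M} \cong \bu{{C_B}}$ and symmetrically $\bu{N} \cong \bu{{C_D}}$ for some $B, D \in \Obj(\cal{A})$; this proves the first assertion.

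For the second assertion, assuming $w^{\bu{{C_A}}}_{\bu{M},\bu{N}} \neq 0$, I would first note that by the first part we may already write $\bu{M} = \bu{{C_B}}$, $\bu{N} = \bu{{C_D}}$. The point is then to compare the restricted count $w^{\bu{{C_A}}}_{\bu{{C_B}},\bu{{C_D}}}$, built from conflations in $\cal{E}_0$ via \eqref{eq:w}, with the unrestricted Hall number $g^{\bu{{C_A}}}_{\bu{{C_B}},\bu{{C_D}}}$ from \eqref{eq:g}. Since $w^{\bu{L}}_{\bu{M},\bu{N}} \le g^{\bu{L}}_{\bu{M},\bu{N}}$ always (the set $W^{\bu{L}}_{\bu{M},\bu{N}}$ of $\cal{E}_0$-conflations sits inside the set of all short exact sequences, and the Aut-normalization is identical), it suffices to show that \emph{every} short exact sequence $0 \to \bu{{C_D}} \to \bu{{C_A}} \to \bu{{C_B}} \to 0$ already lies in $\cal{E}_0$. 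But that is immediate from the $\cal{E}_0$-condition \eqref{eq:E_0:cond}: we have shown $H_1(\bu{{C_B}}) = H_1(\bu{{C_D}}) = 0$ (homology of a $\bu{{C_X}}$ is concentrated in degree $0$), so the implications ``$H_0(\bu{{C_B}}) \neq 0 \Rightarrow H_1(\bu{{C_D}}) = 0$'' and ``$H_1(\bu{{C_B}}) \neq 0 \Rightarrow H_0(\bu{{C_D}}) = 0$'' both hold vacuously or trivially. Therefore $W^{\bu{{C_A}}}_{\bu{{C_B}},\bu{{C_D}}}$ coincides with the full set of extensions, and $w^{\bu{{C_A}}}_{\bu{{C_B}},\bu{{C_D}}} = g^{\bu{{C_A}}}_{\bu{{C_B}},\bu{{C_D}}}$.

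The main obstacle I anticipate is the rigorous exclusion of acyclic direct summands $\bu{{K_P}}, \bu{{K_P}}^*$ from $\bu{M}$ and $\bu{N}$ — i.e., genuinely using the \emph{minimality} of the resolution underlying $\bu{{C_A}}$ rather than just its homology. The homology computation only tells us the $*$-type summands vanish; to kill the acyclic summands I need an argument (most cleanly via the horseshoe lemma comparing projective resolutions, or via Fact~\ref{fct:B:lem4.2}'s uniqueness applied to $\bu{{C_A}}$ itself) showing that a sub- or quotient-complex of a minimal complex $\bu{{C_A}}$ that is homologically concentrated in degree $0$ is again minimal, hence of the form $\bu{{C_X}}$. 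Everything else is routine diagram chasing and bookkeeping with the formulas \eqref{eq:g}, \eqref{eq:w}.
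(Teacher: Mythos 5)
Your overall route is the same as the paper's (decompose $\bu{M},\bu{N}$ via Fact~\ref{fct:B:lem4.2}, use the homology long exact sequence together with the $\cal{E}_0$-condition \eqref{eq:E_0:cond}, and kill acyclic summands by minimality), and your treatment of the second assertion --- every short exact sequence $0\to\bu{{C_D}}\to\bu{{C_A}}\to\bu{{C_B}}\to0$ lies in $\cal{E}_0$ because $H_1(\bu{{C_B}})=H_1(\bu{{C_D}})=0$, so the conflation count equals the full count --- is correct and even more explicit than the paper. But two steps in your first part are not proved as stated. First, the claim $H_1(\bu{N})\hookrightarrow H_1(\bu{{C_A}})=0$ is false in general: in the periodic long exact sequence the kernel of $H_1(\bu{N})\to H_1(\bu{{C_A}})$ is the image of the connecting map from $H_0(\bu{M})$, so injectivity is exactly what you cannot assume. (Likewise ``$H_0(\bu{N})=H_1(\bu{M})$'' is wrong here; that identity holds when the middle term is acyclic, as in Proposition~\ref{prop:coalg:E_0}, whereas for middle term $\bu{{C_A}}$ you only get an injection $H_1(\bu{M})\hookrightarrow H_0(\bu{N})$.) The conclusion $H_1(\bu{N})=0$ is still reachable inside your framework: the long exact sequence gives a surjection $H_0(\bu{M})\twoheadrightarrow H_1(\bu{N})$ (the next term is $H_1(\bu{{C_A}})=0$), and then either $H_0(\bu{M})=0$ or condition \eqref{eq:E_0:cond} with $i=0$ forces $H_1(\bu{N})=0$; but you must argue this way rather than via the nonexistent injectivity.

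Second, the exclusion of acyclic summands --- which you yourself flag as the main obstacle --- is left without a valid argument: the assertion that the acyclic part of $\bu{M}$ or $\bu{N}$ ``would have to embed into, or be a quotient of, the acyclic part of $\bu{{C_A}}$'' is unjustified, and the parenthetical ``$A$ has no projective summand'' is neither a consequence of minimality nor relevant (a minimal resolution of $A'\oplus P$ with $P$ projective exists and is still minimal). The paper closes this by working componentwise: since $\bu{N}\to\bu{{C_A}}$ is a componentwise monomorphism and $\bu{{C_A}}\to\bu{M}$ a componentwise epimorphism compatible with the zero differential of $\bu{{C_A}}$, one gets $d^N_0=d^M_0=0$, and the snake lemma applied to $d^N_1,f_A,d^M_1$ gives $\Ker(d^N_1)=0$; this already eliminates all $*$-type and $\bu{{K_Q}}^*$-type summands without any homology juggling. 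For the remaining $\bu{{K_P}}$-type summands one uses that the degreewise sequences $0\to N_i\to (C_A)_i\to M_i\to 0$ split (the $M_i$ are projective), so $f_A$ becomes an upper-triangular matrix with diagonal blocks $d^N_1$ and $d^M_1$; a summand $\bu{{K_P}}$ with $P\neq0$ of $\bu{N}$ or $\bu{M}$ would then exhibit an identity as a matrix entry of $f_A$ with respect to suitable direct sum decompositions, contradicting minimality of the resolution. If you replace your two problematic steps by these arguments, your proof is complete and coincides in substance with the paper's.
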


\begin{proof}
Let us express the exact sequence \eqref{eq:NCM}
in the following exact commutative diagram:
\begin{align*}
\xymatrix{
 0   \ar[r]    
&N_1 \ar[r] \ar@<-.4ex>[d]_{d^N_1}
&P_A \ar[r] \ar@{_{(}->}@<-.4ex>[d]_{f_A}
&M_1 \ar[r] \ar@<-.4ex>[d]_{d^M_1}
&0
\\
 0   \ar[r] 
&N_0 \ar[r] \ar@<-.4ex>[u]_{d^N_0}
&Q_A \ar[r] \ar@<-.4ex>[u]_{0}
&M_0 \ar[r] \ar@<-.4ex>[u]_{d^M_0}
&0
}. 
\end{align*}
Then it is easily seen that 
$d^N_0 = 0$ and $d^M_0 = 0$.
By the snake lemma we have
the long exact sequence
$$
\xymatrix@1{
 0             \ar[r]
&\Ker(d^N_1)   \ar[r]
&0             \ar[r]
&\Ker(d^M_1)   \ar[r]
&\Coker(d^N_1) \ar[r]
&A             \ar[r]
&\Coker(d^M_1) \ar[r]
&0
}.
$$
Thus $\Ker(d^N_1)=0$,
which with Fact~\ref{fct:B:lem4.2} implies 
$\bu{N} = \bu{{C_D}} \oplus \bu{K}$
with $D\in\Obj(\cal{A})$ and $H_*(\bu{K})=0$.
Since $\bu{N}$ is a subcomplex of $\bu{{C_A}}$, 
$\bu{K}$ appears in the resolution $\bu{{C_A}}$.
Since $\bu{{C_A}}$ is minimal,
we have $\bu{K}=0$.

Next recall the condition \eqref{eq:E_0:cond} of $\cal{E}_0$.
If $H_1(\bu{M}) \neq 0$ then $H_0(\bu{N})=0$,
but since $H_0(\bu{N})=D$ we have $D=0$ and $\bu{N}=0$.
Then $\bu{M}=\bu{{C_A}}$ and it is done.

If $H_1(\bu{M}) = 0$ then we have 
$\bu{M} = \bu{{C_B}} \oplus \bu{K}$ 
with $B\in\Obj(\cal{A})$ and $H_*(\bu{K})=0$.
Also in this case the minimality of the resolution $\bu{{C_A}}$
implies $\bu{K}=0$, 
so we are done.
\end{proof}

\begin{lem}
\label{lem:aa}
For any $A,B\in \Obj(\cal{A})$ we have
$$
a_{\bu{{C_A}}} =a_{A} \bigl| \Hom_{\cal{A}}(Q_A,P_A) \bigr|,
\qquad
\bigl| \Hom_{\cal{C}(\cal{A})}(\bu{{C_A}},\bu{{C_B}} ) \bigr|
=
\bigl| \Hom_{\cal{A}}(Q_A,P_B) \bigr|
\bigl| \Hom_{\cal{A}}(A, B) \bigr|.
$$
\end{lem}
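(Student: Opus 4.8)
The plan is to deduce both identities from the analysis of the map induced on zeroth homology. Write $\bu{{C_A}} = \cpx{P_A}{f_A}{0}{Q_A}$ and $\bu{{C_B}} = \cpx{P_B}{f_B}{0}{Q_B}$, where $0 \to P_A \xrightarrow{f_A} Q_A \to A \to 0$ and $0 \to P_B \xrightarrow{f_B} Q_B \to B \to 0$ are the minimal projective resolutions. Unwinding the definition of a morphism in $\cal{C}(\cal{A})$ shows that $\Hom_{\cal{C}(\cal{A})}(\bu{{C_A}},\bu{{C_B}})$ is exactly the set of pairs $(s_1,s_0) \in \Hom_{\cal{A}}(P_A,P_B) \times \Hom_{\cal{A}}(Q_A,Q_B)$ with $s_0 \circ f_A = f_B \circ s_1$, the other commutativity constraint being vacuous since the degree-$0$ differentials vanish. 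Because $H_0(\bu{{C_A}}) = Q_A/\Img(f_A) = A$ and $H_1(\bu{{C_A}}) = 0$, and likewise for $B$, passing to $H_0$ defines a map
$$
 \Phi \colon \Hom_{\cal{C}(\cal{A})}(\bu{{C_A}},\bu{{C_B}}) \longrightarrow \Hom_{\cal{A}}(A,B),
$$
sending $\bu{s}$ to the morphism $A \to B$ induced by $s_0$ on cokernels.

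First I would check that $\Phi$ is surjective: for $\psi \colon A \to B$, projectivity of $Q_A$ and surjectivity of $Q_B \twoheadrightarrow B$ give a lift $s_0 \colon Q_A \to Q_B$ of $\psi$, after which $s_0 f_A$ maps into $\Img(f_B)$, so (as $f_B$ is monic) it determines a unique $s_1 \colon P_A \to P_B$ with $(s_1,s_0)$ a morphism of complexes and $\Phi((s_1,s_0)) = \psi$. Next I would compute the fibre over $0$: a morphism is killed by $\Phi$ iff $s_0(Q_A) \subseteq \Img(f_B)$, i.e.\ iff $s_0 = f_B \circ h$ for a unique $h \in \Hom_{\cal{A}}(Q_A,P_B)$, and then necessarily $s_1 = h \circ f_A$; hence $h \mapsto (h f_A,\, f_B h)$ is a bijection $\Hom_{\cal{A}}(Q_A,P_B) \xrightarrow{\ \sim\ } \Phi^{-1}(0)$. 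As $\Phi$ is a surjective homomorphism of finite abelian groups, every fibre has cardinality $\bigl|\Hom_{\cal{A}}(Q_A,P_B)\bigr|$, and summing over $\Hom_{\cal{A}}(A,B)$ gives the second identity.

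For the first identity I would take $B = A$, so that $\Phi$ becomes a surjective ring homomorphism $\Phi \colon E := \operatorname{End}_{\cal{C}(\cal{A})}(\bu{{C_A}}) \to R := \operatorname{End}_{\cal{A}}(A)$ with $\Ker\Phi = \{(h f_A,\, f_A h) \mid h \in \Hom_{\cal{A}}(Q_A,P_A)\}$, of cardinality $\bigl|\Hom_{\cal{A}}(Q_A,P_A)\bigr|$. The ring $E$ is finite by condition (a), so it suffices to show $\Ker\Phi \subseteq \mathrm{rad}(E)$: then an element of $E$ is a unit if and only if its image in $R$ is a unit, whence $\Aut_{\cal{C}(\cal{A})}(\bu{{C_A}}) = \Phi^{-1}(\Aut_{\cal{A}}(A))$, which by constancy of the fibre size has cardinality $a_A \bigl|\Hom_{\cal{A}}(Q_A,P_A)\bigr|$, as claimed. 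Since $\Ker\Phi$ is a two-sided ideal, the inclusion into $\mathrm{rad}(E)$ is equivalent to saying that for every $h$ the endomorphism $\Id_{\bu{{C_A}}} + (h f_A,\, f_A h)$ of $\bu{{C_A}}$ is an automorphism, i.e.\ that $\Id_{P_A} + h f_A$ and $\Id_{Q_A} + f_A h$ are automorphisms in $\cal{A}$.

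This is the step where minimality of the resolution enters, and it is the main obstacle. Condition (a) makes $\cal{A}$ a Krull--Schmidt category, and minimality of $0 \to P_A \xrightarrow{f_A} Q_A \to A \to 0$ — that no matrix component $f_{ij}$ is an isomorphism — says precisely that $f_A$ is a radical morphism of $\cal{A}$. Since the radical morphisms of $\cal{A}$ form an ideal closed under composition, $f_A h \in \Hom_{\cal{A}}(Q_A,Q_A)$ and $h f_A \in \Hom_{\cal{A}}(P_A,P_A)$ are radical endomorphisms, hence lie in $\mathrm{rad}(\operatorname{End}_{\cal{A}}(Q_A))$ and $\mathrm{rad}(\operatorname{End}_{\cal{A}}(P_A))$, which (the objects being Krull--Schmidt, the rings finite) coincide with the respective Jacobson radicals; therefore $\Id + f_A h$ and $\Id + h f_A$ are invertible. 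This yields $\Ker\Phi \subseteq \mathrm{rad}(E)$ and completes the argument. The only delicate point is to record cleanly that ``minimal'' in the sense of Definition~\ref{dfn:B:4} amounts to $f_A$ being a radical morphism; this is standard in Krull--Schmidt categories and underlies \cite[Lemma~4.1]{B}.
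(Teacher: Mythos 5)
Your proof is correct and follows essentially the same route as the paper, which simply invokes the short exact sequence $0 \to \Hom_{\cal{A}}(Q_A,P_B) \to \Hom_{\cal{C}(\cal{A})}(\bu{{C_A}},\bu{{C_B}}) \to \Hom_{\cal{A}}(A,B) \to 0$ from the proof of \cite[Lemma~4.3]{B}; you construct exactly this sequence via the induced map on $H_0$. The one point the paper leaves implicit --- that the first identity requires $\Aut_{\cal{C}(\cal{A})}(\bu{{C_A}})$ to be the full preimage of $\Aut_{\cal{A}}(A)$, which is where minimality of the resolution (i.e.\ $f_A$ being a radical morphism) enters --- is correctly supplied by your Jacobson-radical argument.
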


\begin{proof}
As mentioned in \cite[Proof of Lemma~4.3]{B},
one can easily see that
there is a short exact sequence
$$
\xymatrix@1{
 0\ar[r]
&\Hom_{\cal{A}}(Q_A,P_B ) \ar[r]
&\Hom_{\cal{C}(\cal{A})}(\bu{{C_A}},\bu{{C_B}} ) \ar[r]
&\Hom_{\cal{A}}(A,B) \ar[r]
&0
},
$$
which yields the assertions.
\end{proof}

\begin{lem}
\label{lem:ggt}
For $A,B,D \in \Obj(\cal{A})$ we have 
$$
 g^{\bu{{C_A}}}_{\bu{{C_B}},\bu{{C_D}}}/g^A_{B,D}
=t^{2\<Q_D,P_B\>}.
$$
\end{lem}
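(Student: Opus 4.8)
The plan is to reduce the identity to the formula \eqref{eq:gehaaa}, applied both in $\cal{A}$ and in the abelian category $\cal{C}(\cal{A})$, together with Lemma~\ref{lem:aa} and a count of extensions with a fixed middle term. First I would dispose of the degenerate situation. If $g^{\bu{{C_A}}}_{\bu{{C_B}},\bu{{C_D}}}\neq0$, fix a conflation $0\to\bu{{C_D}}\to\bu{{C_A}}\to\bu{{C_B}}\to0$; taking the degree-$1$ and degree-$0$ parts gives short exact sequences $0\to P_D\to P_A\to P_B\to0$ and $0\to Q_D\to Q_A\to Q_B\to0$, and since $P_B$ and $Q_B$ are projective these split, so that $P_A\cong P_B\oplus P_D$ and $Q_A\cong Q_B\oplus Q_D$; applying $H_0$ to the same conflation also shows $g^A_{B,D}\neq0$, so the left-hand side makes sense. (The complementary case $g^{\bu{{C_A}}}_{\bu{{C_B}},\bu{{C_D}}}=0$ is the only one where the stated identity has to be read with some care; by Lemma~\ref{lem:NCM} such a coefficient never enters the sums in which this lemma is applied, so it may be ignored there.)

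Assume now the two direct-sum decompositions above. By \eqref{eq:gehaaa},
\[
 g^{\bu{{C_A}}}_{\bu{{C_B}},\bu{{C_D}}}
 =\frac{\bigl|\Ext^1_{\cal{C}(\cal{A})}(\bu{{C_B}},\bu{{C_D}})_{\bu{{C_A}}}\bigr|}
        {\bigl|\Hom_{\cal{C}(\cal{A})}(\bu{{C_B}},\bu{{C_D}})\bigr|}\,
  \frac{a_{\bu{{C_A}}}}{a_{\bu{{C_B}}}\,a_{\bu{{C_D}}}},
 \qquad
 g^{A}_{B,D}
 =\frac{\bigl|\Ext^1_{\cal{A}}(B,D)_{A}\bigr|}
        {\bigl|\Hom_{\cal{A}}(B,D)\bigr|}\,
  \frac{a_{A}}{a_{B}\,a_{D}}.
\]
I would substitute the two formulas of Lemma~\ref{lem:aa}, together with the factorisation $\bigl|\Hom_{\cal{A}}(Q_A,P_A)\bigr|=\bigl|\Hom_{\cal{A}}(Q_B,P_B)\bigr|\,\bigl|\Hom_{\cal{A}}(Q_B,P_D)\bigr|\,\bigl|\Hom_{\cal{A}}(Q_D,P_B)\bigr|\,\bigl|\Hom_{\cal{A}}(Q_D,P_D)\bigr|$ coming from the splittings. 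After cancellation only one $\Hom$-factor survives, and one is left with
\[
 \frac{g^{\bu{{C_A}}}_{\bu{{C_B}},\bu{{C_D}}}}{g^{A}_{B,D}}
 =\frac{\bigl|\Ext^1_{\cal{C}(\cal{A})}(\bu{{C_B}},\bu{{C_D}})_{\bu{{C_A}}}\bigr|}
        {\bigl|\Ext^1_{\cal{A}}(B,D)_{A}\bigr|}\;
  \bigl|\Hom_{\cal{A}}(Q_D,P_B)\bigr|.
\]
Since $Q_D$ is projective one has $\Ext^{\ge1}_{\cal{A}}(Q_D,-)=0$, hence $\<Q_D,P_B\>=\dim_{\frk{k}}\Hom_{\cal{A}}(Q_D,P_B)$ and $\bigl|\Hom_{\cal{A}}(Q_D,P_B)\bigr|=q^{\<Q_D,P_B\>}=t^{2\<Q_D,P_B\>}$. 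It therefore remains to prove the numerical identity $\bigl|\Ext^1_{\cal{C}(\cal{A})}(\bu{{C_B}},\bu{{C_D}})_{\bu{{C_A}}}\bigr|=\bigl|\Ext^1_{\cal{A}}(B,D)_{A}\bigr|$.

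For this I would use the isomorphism $\Ext^1_{\cal{C}(\cal{A})}(\bu{{C_B}},\bu{{C_D}})\cong\Hom_{\Ho(\cal{A})}(\bu{{C_B}},\bu{{C_D}}^{*})$ of \cite[Lemma~3.3]{B}. Writing out the differentials of $\bu{{C_B}}$ and $\bu{{C_D}}^{*}$ one sees that an extension class is represented by a pair $(s_1,s_0)$, with $s_1\in\Hom_{\cal{A}}(P_B,Q_D)$ relevant only modulo $f_D\,\Hom_{\cal{A}}(P_B,P_D)+\Hom_{\cal{A}}(Q_B,Q_D)\,f_B$ (a quotient identified, via the resolutions of $B$ and $D$, with $\Ext^1_{\cal{A}}(B,D)$), and $s_0\in\Hom_{\cal{A}}(Q_B,P_D)$ subject to $f_D s_0=0$ and $s_0 f_B=0$; the middle complex $\bu{E}$ then has $E_1\cong P_A$, $E_0\cong Q_A$, degree-$1$ differential $\delta$ block-upper-triangular with diagonal blocks $f_D,f_B$ and upper-right block $s_1$, and degree-$0$ differential with only its $Q_B\to P_D$ block (equal to $s_0$) possibly nonzero. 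Since $\delta$ is mono, $H_1(\bu{E})=0$, so by Fact~\ref{fct:B:lem4.2} one may write $\bu{E}\cong\bu{{C_X}}\oplus\bu{K}$ with $X:=H_0(\bu{E})$ and $\bu{K}$ acyclic. Now $\bu{E}\cong\bu{{C_A}}$ forces $s_0=0$ (as $\bu{{C_A}}$ has vanishing degree-$0$ differential) and $X\cong A$; conversely, when $s_0=0$ and $X\cong A$ the relation $E_1\cong P_A=P_X$ together with Krull--Schmidt forces $\bu{K}=0$, hence $\bu{E}\cong\bu{{C_A}}$. Thus $\bu{E}\cong\bu{{C_A}}$ exactly when $s_0=0$ and the class of $s_1$ in $\Ext^1_{\cal{A}}(B,D)$ lies in $\Ext^1_{\cal{A}}(B,D)_{A}$ (the long exact homology sequence matching $H_0(\bu{E})$ with the middle term of the $\cal{A}$-extension), which is the required bijection. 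The main obstacle is precisely this last step: making $\Ext^1_{\cal{C}(\cal{A})}(\bu{{C_B}},\bu{{C_D}})$ explicit enough to recognise the spurious degree-$0$ datum $s_0$, to see that the middle-term condition kills it exactly, and to match $H_0$ of the middle complex with the $\cal{A}$-extension; the bookkeeping in the second paragraph is routine but must be arranged so that exactly one $\Hom$-factor survives.
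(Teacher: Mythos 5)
Your proof follows the same route as the paper's: apply \eqref{eq:gehaaa} in both $\cal{A}$ and $\cal{C}(\cal{A})$, feed in Lemma~\ref{lem:aa}, use the degreewise splittings $P_A\cong P_B\oplus P_D$, $Q_A\cong Q_B\oplus Q_D$ to cancel all but one $\Hom$-factor, and identify $\bigl|\Hom_{\cal{A}}(Q_D,P_B)\bigr|$ with $t^{2\<Q_D,P_B\>}$. The bookkeeping in your second paragraph is correct and matches the paper's computation exactly. Where you go beyond the paper is the last paragraph: the paper simply asserts $\bigl|\Ext^1_{\cal{A}}(B,D)_A\bigr|=\bigl|\Ext^1_{\cal{C}(\cal{A})}(\bu{{C_B}},\bu{{C_D}})_{\bu{{C_A}}}\bigr|$ with no justification, whereas you prove it via $\Ext^1_{\cal{C}(\cal{A})}(\bu{M},\bu{N})\cong\Hom_{\Ho(\cal{A})}(\bu{M},\bu{N}^*)$, the presentation of this $\Hom$ as $\Hom(P_B,Q_D)/(f_D\Hom(P_B,P_D)+\Hom(Q_B,Q_D)f_B)\cong\Ext^1_{\cal{A}}(B,D)$, and the Krull--Schmidt argument matching middle terms through $H_0$. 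This is the actual content of the lemma and your argument for it is sound. One cosmetic point: $f_D s_0=0$ with $f_D$ mono already forces $s_0=0$ at the level of chain maps, so the ``spurious datum'' is killed before the middle-term condition is even imposed.

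Your opening caveat is also substantively correct and worth keeping: the identity as literally stated fails when $g^{\bu{{C_A}}}_{\bu{{C_B}},\bu{{C_D}}}=0$ but $g^A_{B,D}\neq0$, which does happen (for the $A_2$ quiver take $B=S_1$, $D=P_2$, $A=P_1$: then $g^{P_1}_{S_1,P_2}\neq 0$ but no exact sequence $0\to\bu{{C_{P_2}}}\to\bu{{C_{P_1}}}\to\bu{{C_{S_1}}}\to0$ exists, since the degree-one parts are $0,0,P_2$). So the correct statement requires the hypothesis $P_A\cong P_B\oplus P_D$, $Q_A\cong Q_B\oplus Q_D$ (equivalently $g^{\bu{{C_A}}}_{\bu{{C_B}},\bu{{C_D}}}\neq0$), which you impose. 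Be aware, though, that your dismissal of the degenerate case via Lemma~\ref{lem:NCM} only covers the direction in which the lemma is quoted \emph{before} the substitution: in Lemma~\ref{lem:DcCA} the sum is rewritten as $\sum_{B,D}t^{\cdots}g^A_{B,D}(\cdots)$, and the degenerate pairs $(B,D)$ re-enter there with nonzero coefficient. That is an issue with the downstream lemma rather than with your proof of this one, but your parenthetical slightly understates it.
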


\begin{proof}
Using the formula \eqref{eq:gehaaa},
$\bigl| \Ext_{\cal{A}}(B,D)_A \bigr| 
=\bigl| \Ext_{\cal{C}(\cal{A})}(\bu{{C_B}},\bu{{C_D}})_{\bu{{C_A}}} \bigr|$ 
and Lemma~\ref{lem:aa}, we have
\begin{align*}
\dfrac{g^{\bu{{C_A}}}_{\bu{{C_B}},\bu{{C_D}}}}{g^A_{B,D}}
&=
 \dfrac{\bigl| \Hom_{\cal{A}}(B,D) \bigr|}
       {\bigl| \Hom_{\cal{C}(\cal{A})}(\bu{{C_B}},\bu{{C_D}}) \bigr|}
 \dfrac{ a_{\bu{{C_A}}} }{a_{A}}
 \dfrac{a_{B}}{ a_{\bu{{C_B}}} }
 \dfrac{a_{C}}{ a_{\bu{{C_C}}} }
\\
&=
 \dfrac{1}{\bigl| \Hom_{\cal{A}}(Q_B,P_D) \bigr|}
 \dfrac{\bigl| \Hom_{\cal{A}}(Q_A,P_A) \bigr|}
       {\bigl| \Hom_{\cal{A}}(Q_B,P_B) \bigr|
        \bigl| \Hom_{\cal{A}}(Q_D,P_D) \bigr|}.
\end{align*}
Note that the exact sequences
$0 \to {C_D}_i \to {C_A}_i \to {C_B}_i \to 0$ ($i=0,1$)
split since ${C_B}_i$ are projective.
So ${C_A}_i \cong {C_D}_i \oplus {C_B}_i$,
which yields
\begin{align*}
\dfrac{g^{\bu{{C_A}}}_{\bu{{C_B}},\bu{{C_D}}}}{g^A_{B,D}}
&=
 \dfrac{1}{\bigl| \Hom_{\cal{A}}(Q_B,P_D) \bigr|}
 \dfrac{\bigl| \Hom_{\cal{A}}(Q_B \oplus Q_C,P_B \oplus P_D) \bigr|}
       {\bigl| \Hom_{\cal{A}}(Q_B,P_B) \bigr|
        \bigl| \Hom_{\cal{A}}(Q_D,P_D) \bigr|}
\\
&= \bigl| \Hom_{\cal{A}}(Q_B,P_D) \bigr|^{-1}
   \bigl| \Hom_{\cal{A}}(Q_B,P_D) \bigr|
   \bigl| \Hom_{\cal{A}}(Q_D,P_B) \bigr|
\\
&= \bigl| \Hom_{\cal{A}}(Q_D,P_B) \bigr|.
\end{align*}
The last equation is equal to 
$t^{2\<Q_D,P_B\>}$,
since by  the hereditarity of $\cal{A}$ we have
$$
 t^{2\<Q_D,P_B\>}
=q^{\<Q_D,P_B\>}
=\bigl| \Hom_{\cal{A}}(Q_D,P_B) \bigr|/\bigl| \Ext_{\cal{A}}(Q_D,P_B) \bigr|
$$
and we also have 
$\bigl| \Ext_{\cal{A}}(Q_D,P_B) \bigr|=1$
by $P_B \in \Obj(\cal{P})$.
\end{proof}

\begin{lem}
\label{lem:DcCA}
For $A \in \Obj(\cal{A})$ we have
$$
\Delta_{\chi_0}([\bu{{C_A}}])
=\sum_{B,D \in \Iso(\cal{C}(\cal{A}))} 
   t^{\<D,P_B\>-\<Q_D,B\>}  g^{A}_{B, D}
    \bigl( K_{\cl{Q_D}} * [\bu{{C_B}}]   \bigr) \otimes 
    \bigl( [\bu{{C_D}}]   * K_{\cl{P_B}} \bigr).
$$
\end{lem}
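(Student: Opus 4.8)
The plan is to unwind the definition of $\Delta_{\chi_0}$ on the basis element $[\bu{{C_A}}]$, feed in the two structural lemmas already at hand, and finish with an Euler-form bookkeeping. Since $\bu{{C_A}}$ is the basis object of Fact~\ref{fct:B:lem4.2} with the $\bu{{C_B}}^*,\bu{{K_P}},\bu{{K_Q}}^*$ summands trivial, Definition~\ref{dfn:coprod} gives $\Delta_{\chi_0}([\bu{{C_A}}])=\Delta_{\chi_0,\cal{E}^0}([\bu{{C_A}}])$, because $\Delta^{op}_{\chi_0,\cal{E}^0}([0])=1\otimes 1$ and the remaining $K_0$-factors are trivial. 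Hence
\[
\Delta_{\chi_0}([\bu{{C_A}}])
=\sum_{\bu M,\bu N}t^{\chi_0(\bu M,\bu N)}\,
 \frac{\bigl|W^{\bu{{C_A}}}_{\bu M,\bu N}\bigr|}{\bigl|\Hom_{\cal{C}(\cal{A})}(\bu M,\bu N)\bigr|}\,
 \frac{a_{\bu{{C_A}}}}{a_{\bu M}a_{\bu N}}\,
 \bigl(K_{\cl{N_0}}*[\bu M]\bigr)\otimes\bigl([\bu N]*K_{\cl{M_1}}\bigr),
\]
the sum ranging over conflations $\bu N\to\bu{{C_A}}\to\bu M$ in $\cal{E}_0$.

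Next I would apply Lemma~\ref{lem:NCM}: every such conflation has $\bu M\cong\bu{{C_B}}$ and $\bu N\cong\bu{{C_D}}$ for suitable $B,D\in\Obj(\cal{A})$, and whenever the coefficient is nonzero one may take $\bu M=\bu{{C_B}}$, $\bu N=\bu{{C_D}}$ with that structure constant equal to $g^{\bu{{C_A}}}_{\bu{{C_B}},\bu{{C_D}}}$ (all extensions of $\bu{{C_B}}$ by $\bu{{C_D}}$ with middle term $\bu{{C_A}}$ lie in $\cal{E}_0$, since $H_1(\bu{{C_B}})=0$, so $|W^{\bu{{C_A}}}_{\bu{{C_B}},\bu{{C_D}}}|=|\Ext^1_{\cal{C}(\cal{A})}(\bu{{C_B}},\bu{{C_D}})_{\bu{{C_A}}}|$ and \eqref{eq:gehaaa} applies). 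For these $\bu M,\bu N$ the relevant grading pieces are $N_0=Q_D$ and $M_1=P_B$ from the minimal resolutions. As $X\mapsto\bu{{C_X}}$ is injective on isomorphism classes (uniqueness part of Fact~\ref{fct:B:lem4.2}, or since $H_0(\bu{{C_X}})\cong X$), the nonzero terms reindex by $(B,D)\in\Iso(\cal{A})^2$ and
\[
\Delta_{\chi_0}([\bu{{C_A}}])
=\sum_{B,D}t^{\chi_0(\bu{{C_B}},\bu{{C_D}})}\,g^{\bu{{C_A}}}_{\bu{{C_B}},\bu{{C_D}}}\,
 \bigl(K_{\cl{Q_D}}*[\bu{{C_B}}]\bigr)\otimes\bigl([\bu{{C_D}}]*K_{\cl{P_B}}\bigr).
\]
Lemma~\ref{lem:ggt} then rewrites $g^{\bu{{C_A}}}_{\bu{{C_B}},\bu{{C_D}}}=t^{2\<Q_D,P_B\>}g^{A}_{B,D}$, and the tensor factors already agree with those of the claim.

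It remains to match the exponent of $t$. Using $\chi_0(\bu M,\bu N)=-\<\bu N,\bu M\>'$ (equation~\eqref{eq:delta'}, with the form~\eqref{eq:euler-dash}) and $\bu{{C_X}}=\cpx{P_X}{f_X}{0}{Q_X}$, one computes $\chi_0(\bu{{C_B}},\bu{{C_D}})=-\<Q_D,Q_B\>-\<P_D,P_B\>$, so the total exponent is $2\<Q_D,P_B\>-\<Q_D,Q_B\>-\<P_D,P_B\>$. Substituting $\cl B=\cl{Q_B}-\cl{P_B}$ and $\cl D=\cl{Q_D}-\cl{P_D}$ (classes of the minimal projective resolutions $0\to P_X\to Q_X\to X\to0$) and using bilinearity of $\<\cdot,\cdot\>$ on $K(\cal{A})$, this equals $\<D,P_B\>-\<Q_D,B\>$, exactly the exponent in the statement. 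This proves the identity.

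I do not expect a serious obstacle: the two lemmas used are already established, and what is left is bookkeeping. The points requiring care are (i) the reindexing together with the identification of the structure constant with $g^{\bu{{C_A}}}_{\bu{{C_B}},\bu{{C_D}}}$, which is precisely where hereditarity and the fact that $\cal{E}_0$ contains every extension among the complexes $\bu{{C_X}}$ enter, and (ii) the final Euler-form computation, where one must note that the twisted multiplication in $K_{\cl{N_0}}*[\bu M]$ and $[\bu N]*K_{\cl{M_1}}$ contributes no extra power of $t$ to the comparison — it cannot, since these factors appear verbatim on both sides of the asserted equality.
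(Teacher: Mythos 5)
Your proposal is correct and follows essentially the same route as the paper's proof: unwind the definition of $\Delta_{\chi_0}$ on $[\bu{{C_A}}]$, invoke Lemma~\ref{lem:NCM} to restrict the sum to pairs $(\bu{{C_B}},\bu{{C_D}})$ with structure constant $g^{\bu{{C_A}}}_{\bu{{C_B}},\bu{{C_D}}}$, convert via Lemma~\ref{lem:ggt}, and finish with the Euler-form identity $2\<Q_D,P_B\>-\<Q_D,Q_B\>-\<P_D,P_B\>=\<D,P_B\>-\<Q_D,B\>$ using $\cl{X}=\cl{Q_X}-\cl{P_X}$. The extra justifications you supply (injectivity of $X\mapsto\bu{{C_X}}$, and that every extension of $\bu{{C_B}}$ by $\bu{{C_D}}$ lies in $\cal{E}_0$) are correct and only make explicit what the paper leaves implicit.
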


\begin{proof}
By Lemma~\ref{lem:NCM} and the formula \eqref{eq:gehaaa} we have 
\begin{align*}
\Delta_{\chi_0}([\bu{{C_A}}])
&= \sum_{\bu{M},\bu{N} \in \Iso(\cal{C}(\cal{P}))} 
     t^{-\<\bu{N},\bu{M}\>'} 
     \dfrac{\bigl| W^{\bu{{C_A}}}_{\bu{M},\bu{N}} \bigr|}
           {\bigl| \Hom_{  \cal{C}(\cal{A})}(\bu{M},\bu{N}) \bigr|}
     \dfrac{ a_{\bu{{C_A}}} } { a_{\bu{M}} a_{\bu{N}} }
     \bigl( K_{\cl{N_0}} * [\bu{M}]   \bigr) \otimes 
     \bigl( [\bu{N}]   * K_{\cl{M_1}} \bigr)
\\
&=\sum_{B,D  \in \Iso(\cal{A})} 
   t^{-\<\bu{{C_D}},\bu{{C_B}}\>'} 
    \dfrac{\bigl| \Ext^1_{\cal{C}(\cal{A})}
                 (\bu{{C_B}},\bu{{C_D}})_{\bu{{C_A}}} \bigr|}
         {\bigl| \Hom_{  \cal{C}(\cal{A})}(\bu{{C_B}},\bu{{C_D}}) \bigr|}
    \dfrac{ a_{\bu{{C_A}}} } { a_{\bu{{C_B}}} a_{\bu{{C_D}}} }
    \bigl( K_{\cl{Q_D}} * [\bu{{C_B}}]   \bigr) \otimes 
    \bigl( [\bu{{C_D}}] * K_{\cl{P_B}} \bigr)
\\
&=\sum_{B,D} 
   t^{-\<\bu{{C_D}},\bu{{C_B}}\>'} 
     g^{ \bu{{C_A}} }_{\bu{{C_B}}, \bu{{C_D}} }
    \bigl( K_{\cl{Q_D}} * [\bu{{C_B}}]   \bigr) \otimes 
    \bigl( [\bu{{C_D}}]   * K_{\cl{P_B}} \bigr)
\end{align*}
Then by Lemma~\ref{lem:ggt} we have
\begin{align*}
\Delta_{\chi_0}([\bu{{C_A}}])
&=\sum_{B,D} 
   t^{-\<\bu{{C_D}},\bu{{C_B}}\>'+2\<Q_D,P_B\>}  g^{A}_{B, D}
    \bigl( K_{\cl{Q_D}} * [\bu{{C_B}}]   \bigr) \otimes 
    \bigl( [\bu{{C_D}}]   * K_{\cl{P_B}} \bigr).
\end{align*}
Note that 
\begin{align*} 
  -\<\bu{{C_D}},\bu{{C_B}}\>'+2\<Q_D,P_B\>
=2\<Q_D,P_B\>-\<Q_D,Q_B\>-\<P_D,P_B\>
=\<D,P_B\>-\<Q_D,B\>
\end{align*}
by $\cl{X}=\cl{Q_X}-\cl{P_X}$.
Thus we have the conclusion.
\end{proof}

Now we start 

\begin{proof}[{Proof of  Theorem~\ref{thm:embedding}}]
By Fact~\ref{fct:B:lem4.6}, it is enough to show 
\begin{align}
\label{eq:compatible1:1}
 I^{\e}_+ \circ \Delta([A]) = \Delta_{\chi_0}(E_A)
\end{align}
for any $A \in \Obj(\cal{A})$.
Recalling the definition \eqref{eq:DHetw} 
of $\Delta \equiv \Delta_{\cal{H}^{\e}_{\tw}(\cal{A})}$ 
on $\cal{H}^{\e}_{\tw}(\cal{A})$, we have
\begin{align*}
\text{LHS of \eqref{eq:compatible1:1}}
&= I^{\e}_+ \circ \Delta'(a_A [[A]])
 = I^{\e}_+ 
    \Bigl(
     \sum_{B,D} t^{\<B,D\>} g^A_{B,D} 
     \bigl( [B] * K_{\cl{D}} \bigr) \otimes [D] 
    \Bigr)
\\
&= \sum_{B,D} t^{\<B,D\>} 
     g^A_{B,D} \bigl( E_B * K_{\cl{D}} \bigr) \otimes E_D.
\end{align*}
Using the definition \eqref{eq:E_A} of $E_A$, 
we have
\begin{align*}
\text{LHS of \eqref{eq:compatible1:1}}
= \sum_{B,D} t^{\<B,D\>} g^A_{B,D} 
   \bigl( t^{\<P_B,B\>} K_{-\cl{P_B}} * [\bu{{C_B}}] * K_{\cl{D}} \bigr) 
   \otimes 
   \bigl( t^{\<P_D,D\>} K_{-\cl{P_D}} * [\bu{{C_D}}] \bigr).
\end{align*}
Recalling the relation \eqref{eq:KM} 
and $\cl{\bu{{C_X}}}=\cl{X}$, we have
\begin{align}
\nonumber
\text{LHS of \eqref{eq:compatible1:1}}
&= \sum_{B,D} 
   t^{\<B,D\> + \<P_B,B\> + \<P_D,D\> 
      + (-\cl{P_D},\cl{\bu{{C_D}}}) - (\cl{D},\cl{\bu{{C_B}}}) }
   g^A_{B,D} 
   \bigl(K_{-\cl{P_B}}  * K_{\cl{D}} * [\bu{{C_B}}] \bigr) 
   \otimes 
   \bigl([\bu{{C_D}}] * K_{-\cl{P_D}} \bigr)
\\
\nonumber
&=\sum_{B,D} 
   t^{\<B,D\> + \<P_B,B\> + \<P_D,D\> - (P_D,D) - (D,B)}
   g^A_{B,D} 
   \bigl(K_{-\cl{P_B}} * K_{\cl{D}} * [\bu{{C_B}}] \bigr) 
   \otimes 
   \bigl([\bu{{C_D}}] * K_{-\cl{P_D}} \bigr)
\\
\label{eq:embed:L}
&=\sum_{B,D} 
   t^{\<P_B,B\> - \<D,P_D\> - \<D,B\> }
   g^A_{B,D} 
   \bigl(K_{-\cl{P_B}} * K_{\cl{D}} * [\bu{{C_B}}] \bigr) 
   \otimes 
   \bigl([\bu{{C_D}}] * K_{-\cl{P_D}} \bigr).
\end{align}

On the other hand,
using Lemma~\ref{lem:DcCA} we have
\begin{align*}
\text{RHS of \eqref{eq:compatible1:1}}
&=\Delta_{\chi}(t^{\<P_A,A\>}K_{-\cl{P_A}}*[\bu{{C_A}}])
\\
&=t^{\<P_A,A\> }
   \sum_{B,D} 
   t^{\<D,P_B\>-\<Q_D,B\>}  g^{A}_{B, D}
    \bigl(K_{-\cl{P_A}} * K_{\cl{Q_D}} * [\bu{{C_B}}]   \bigr) 
    \mathop{\otimes} 
    \bigl(K_{-\cl{P_A}} * [\bu{{C_D}}] * K_{\cl{P_B}} \bigr).
\end{align*}
Using the formula \eqref{eq:KM}
we have
\begin{align}
\nonumber
&\text{RHS of \eqref{eq:compatible1:1}}
\\
\nonumber
&=\sum_{B,D} 
   t^{\<P_A,A\>+\<D,P_B\>-\<Q_D,B\>+(-\cl{P_A},\cl{\bu{{C_D}}})\>}  
   g^{A}_{B, D}
   \bigl(K_{-\cl{P_A}} * K_{\cl{Q_D}} * [\bu{{C_B}}] \bigr) 
   \mathop{\otimes} 
   \bigl([\bu{{C_D}}] * K_{-\cl{P_A}} * K_{\cl{P_B}} \bigr)
\\
\label{eq:embed:R}
&=\sum_{B,D} 
   t^{\<P_A,A\>+\<D,P_B\>-\<Q_D,B\>-(P_A,D)\>}  
   g^{A}_{B, D}
   \bigl(K_{-\cl{P_A}} * K_{\cl{Q_D}} * [\bu{{C_B}}] \bigr) 
   \mathop{\otimes} 
   \bigl([\bu{{C_D}}] * K_{-\cl{P_A}} * K_{\cl{P_B}} \bigr).
\end{align}

Now we compare \eqref{eq:embed:L} and \eqref{eq:embed:R}.
We can assume $\cl{A}=\cl{B}+\cl{D}$
since otherwise $g^A_{B,D}=0$.
Since we are considering the exact commutative diagram
\begin{align*}
\xymatrix{
 0   \ar[r]    
&P_B \ar[r] \ar@{_{(}->}@<-.4ex>[d]_{f_B}
&P_A \ar[r] \ar@{_{(}->}@<-.4ex>[d]_{f_A}
&P_D \ar[r] \ar@{_{(}->}@<-.4ex>[d]_{f_D}
&0
\\
 0   \ar[r] 
&Q_B \ar[r] \ar@<-.4ex>[u]_{0}
&Q_A \ar[r] \ar@<-.4ex>[u]_{0}
&Q_D \ar[r] \ar@<-.4ex>[u]_{0}
&0
},
\end{align*}
we also have 
$\cl{P_A}=\cl{P_B}+\cl{P_D}$ and 
$\cl{Q_A}=\cl{Q_B}+\cl{Q_D}$.
Using these relations we easily see 
\begin{align*}
&K_{-\cl{P_B}} * K_{\cl{D}} = K_{-\cl{P_A}} * K_{\cl{Q_D}},
\\
&K_{-\cl{P_A}} * K_{\cl{P_B}} = K_{-\cl{P_D}}.
\end{align*}
Moreover we can compute
\begin{align*}
  \<P_A,A\>+\<D,P_B\>-\<Q_D,B\>-(P_A,D)
&=\<P_A,B\>+\<P_A,D\>+\<D,P_B\>-\<D,B\>-\<P_D,B\>-(P_A,D)
\\
&=\<P_A,B\>+\<D,P_B\>-\<D,B\>-\<P_D,B\>-\<D,P_A\>
\\
&=\<P_B,B\>+\<D,P_B\>-\<D,B\>-\<D,P_B\>-\<D,P_D\>
\\
&=\<P_B,B\>-\<D,P_D\>-\<D,B\>.
\end{align*}
Therefore \eqref{eq:embed:L} and \eqref{eq:embed:R} coincide.
\end{proof}


\subsection{Bialgebra structure of Bridgeland's Hall algebra}

We continue to consider the product  on 
$\cal{DH}(\cal{A}) \ctp \cal{DH}(\cal{A})$
defined as  
\begin{align}
\label{eq:mul:tensor}
 \bigl([\bu{M}] \otimes [\bu{N}]\bigr) * 
 \bigl([\bu{P}] \otimes [\bu{Q}]\bigr)
 := \bigl([\bu{M}] * [\bu{N}]\bigr) 
    \otimes \bigl([\bu{P}] * [\bu{Q}]\bigr)
\end{align}
and the map
$$
\chi_0(\bu{M},\bu{N}) := -\<\bu{N},\bu{M}\>.
$$

\begin{thm}
\label{thm:main1}
For an abelian category $\cal{A}$ satisfying 
the conditions (a)--(e),
the tuple $(\cal{DH}(\cal{A}),*,[0],\Delta_{\chi_0},\ep)$ 
is a topological bialgebra
under the multiplication \eqref{eq:mul:tensor} 
on $\cal{DH}(\cal{A}) \ctp \cal{DH}(\cal{A})$.
\end{thm}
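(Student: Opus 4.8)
The plan is to reduce everything to showing that $\Delta_{\chi_0}$ and $\ep$ are algebra homomorphisms, the coassociativity, counit and unit axioms being already in hand (the coalgebra axioms were established in the Proposition preceding Theorem~\ref{thm:embedding}, and $\Delta_{\chi_0}([0])=[0]\otimes[0]$, $\ep([0])=1$ are immediate). Since, by Fact~\ref{fct:basis}, $\cal{DH}(\cal{A})$ is generated as an algebra by the elements $E_A$, $F_B$, $K_\alpha$, $K_\beta^*$ and has PBW basis $E_A*K_\alpha*K_\beta^**F_B$, a word-length induction shows it suffices to check
\[
 \Delta_{\chi_0}(g*u)=\Delta_{\chi_0}(g)*\Delta_{\chi_0}(u)
\]
for every generator $g\in\{E_C,F_D,K_\gamma,K_\gamma^*\}$ and every PBW monomial $u=E_A*K_\alpha*K_\beta^**F_B$, using the product \eqref{eq:mul:tensor} on $\cal{DH}(\cal{A})\ctp\cal{DH}(\cal{A})$.

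The cases $g=K_\gamma$, $g=K_\gamma^*$ are bookkeeping: these elements are grouplike for $\Delta_{\chi_0}$, and one moves them through $u$ with \eqref{eq:KM} and \eqref{eq:KK}. For $g=E_C$, Theorem~\ref{thm:embedding} together with the fact that $\cal{H}^{\e}_{\tw}(\cal{A})$ is a topological bialgebra shows that $\Delta_{\chi_0}$ restricted to the subalgebra $\cal{DH}^{+}$ generated by the $E_A$'s and $K_\alpha$'s is an algebra map; symmetrically, Theorem~\ref{thm:embedding2} handles $g=F_D$ on the subalgebra $\cal{DH}^{-}$. Combining this with the definition of $\Delta_{\chi_0}$ on the PBW basis (Definition~\ref{dfn:coprod}), which is precisely the $*$-product in $\cal{DH}(\cal{A})\ctp\cal{DH}(\cal{A})$ of the coproducts of the four factors $E_A$, $K_\alpha$, $K_\beta^*$, $F_B$, the whole statement collapses to the two mixed identities
\[
 \Delta_{\chi_0}(E_A*F_B)=\Delta_{\chi_0}(E_A)*\Delta_{\chi_0}(F_B),
 \qquad
 \Delta_{\chi_0}(F_B*E_A)=\Delta_{\chi_0}(F_B)*\Delta_{\chi_0}(E_A).
\]

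To prove these I would expand $E_A*F_B$ by the $E$--$F$ commutation (cross) relation in $\cal{DH}(\cal{A})$ — the relation exhibiting $\cal{DH}(\cal{A})$ as the Drinfeld double of $\cal{H}^{\e}_{\tw}(\cal{A})$, cf.\ \cite{B} and \cite{Y} — re-express the complexes that appear through the canonical decomposition of Fact~\ref{fct:B:lem4.2}, and use Lemma~\ref{lem:DcCA} to write $\Delta_{\chi_0}(E_A)$ and $\Delta_{\chi_0}(F_B)$ explicitly as sums over $A_1,A_2,B_1,B_2$ involving the Hall numbers $g^{A}_{A_1,A_2}$, $g^{B}_{B_1,B_2}$, powers of $t$ assembled from Euler forms, and Cartan elements $K_{\bullet}$, $K_{\bullet}^*$. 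On the right-hand side the slotwise $*$-product again produces a product of $E$'s and $F$'s in each tensor factor, to be expanded with the same cross relation. After both sides are brought to the PBW form and coefficients are matched, the identity reduces to a twisted form of Green's formula for the Hall numbers of $\cal{A}$ — the very identity underlying the bialgebra property of Green's coproduct on $\cal{H}^{\e}_{\tw}(\cal{A})$ — and one checks that the choice $\chi_0(\bu{M},\bu{N})=-\<\bu{N},\bu{M}\>$ and the restriction to conflations in $\cal{E}_0$ are exactly what make the $t$-exponents and the summation ranges on the two sides agree. This Green-type Hall identity, together with the careful tracking of all the $K_{\bullet}$- and $K_{\bullet}^*$-conjugations, is the step I expect to be the main obstacle.

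There is a more economical route worth recording: by \cite{Y} there is an algebra isomorphism $\cal{DH}(\cal{A})\cong D(\cal{H}^{\e}_{\tw}(\cal{A}))$, and the Drinfeld double — here a double cross product built from a skew-Hopf pairing, so that no antipode is needed — carries a natural tensor-product coalgebra structure $\Delta_D$ making it a topological bialgebra. By Theorems~\ref{thm:embedding} and~\ref{thm:embedding2} the maps $\Delta_D$ and $\Delta_{\chi_0}$ agree on the generators $E_A$, $F_B$, $K_\alpha$, $K_\beta^*$; since $\Delta_{\chi_0}$ is, by Definition~\ref{dfn:coprod}, the linear map whose value on each PBW basis vector is the $*$-product of its values on the four factors, and $\Delta_D$ is multiplicative, the two coproducts coincide on the basis of Fact~\ref{fct:basis}, hence $\Delta_{\chi_0}=\Delta_D$ and the bialgebra property follows; the point still requiring work in this approach is the verification of the double-cross-product axioms for the completed, merely topological bialgebra $\cal{H}^{\e}_{\tw}(\cal{A})$. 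Finally, $\ep$ is an algebra map: on the PBW basis $\ep(E_A*K_\alpha*K_\beta^**F_B)=\delta_{A,0}\,\delta_{B,0}$, so $\ep$ annihilates every basis vector with $A\neq 0$ or $B\neq 0$; since the $*$-product preserves the $E$- and $F$-gradings, $\ep(x*y)=\ep(x)\ep(y)$, and $\ep([0])=1$. This completes the verification that $(\cal{DH}(\cal{A}),*,[0],\Delta_{\chi_0},\ep)$ is a topological bialgebra.
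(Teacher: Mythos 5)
Your overall strategy is the same as the paper's: reduce the bialgebra axiom to multiplicativity of $\Delta_{\chi_0}$ on generators, treat $K_\alpha$ and $K_\beta^*$ as grouplike elements moved through products by \eqref{eq:KM}--\eqref{eq:KK}, and invoke Theorem~\ref{thm:embedding} (resp.\ Theorem~\ref{thm:embedding2}) to handle products of $E$'s (resp.\ of $F$'s). The paper's own proof stops exactly there: it asserts that, by Definition~\ref{dfn:coprod}, it suffices to check $\Delta_{\chi_0}(E_A*E_B)=\Delta_{\chi_0}(E_A)*\Delta_{\chi_0}(E_B)$, and derives this from Fact~\ref{fct:B:lem4.6} and Theorem~\ref{thm:embedding}. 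You go one step further and correctly isolate what that argument leaves implicit: since Definition~\ref{dfn:coprod} prescribes $\Delta_{\chi_0}$ only on the PBW basis of Fact~\ref{fct:basis}, multiplicativity on products that are not already in PBW order --- above all $F_B*E_A$, whose reduction to PBW form uses the Drinfeld-double cross relations --- is not automatic and must be verified separately; this is a genuine point the paper elides. You do not carry out that verification, but you offer two routes: a direct expansion reducing to a twisted Green identity, or the identification $\cal{DH}(\cal{A})\cong D(\cal{H}^{\e}_{\tw}(\cal{A}))$ from \cite{Y} together with the standard tensor-product coalgebra structure on a double built from a skew-Hopf pairing. The second route is the one implicitly underlying the paper's final, unnumbered theorem and is the cleaner way to close the gap, since the bialgebra structure on the double is available independently of the present statement. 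In short: same skeleton as the paper, a sharper diagnosis of where the real content lies, but the decisive mixed identity $\Delta_{\chi_0}(F_B*E_A)=\Delta_{\chi_0}(F_B)*\Delta_{\chi_0}(E_A)$ remains a sketch in your write-up --- as, in effect, it does in the paper.
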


\begin{proof}
By the construction of $\Delta_{\chi_0}$ 
(Definition~\ref{dfn:coprod}),
it is enough to show 
$\Delta_{\chi_0}(E_A * E_B) =\Delta_{\chi_0}(E_A) * \Delta_{\chi_0}(E_B)$.
But it follows from Fact~\ref{fct:B:lem4.6} and Theorem~\ref{thm:embedding},
so we are done.
\end{proof}

Combining the result of \cite{Go} and \cite{Y}, we also have

\begin{thm}
For an abelian category $\cal{A}$ satisfying 
the conditions (a)--(e),
the tuple $(\cal{DH}_{\red}(\cal{A}),*,[0],\Delta_{\chi_0},\ep)$ 
coincides with the Drinfeld double of 
$(\cal{H}^{\e}_{\tw}(\cal{A}),*,[0],\Delta_{\cal{H}^{\e}_{\tw}(\cal{A})},\ep)$ 
as a bialgebra.
\end{thm}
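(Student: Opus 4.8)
The plan is to deduce the full statement from three results already in hand: the algebra isomorphism of \cite{Y} (reformulated abstractly in \cite{Go}), and the coalgebra compatibilities of Theorems~\ref{thm:embedding}, \ref{thm:embedding2} and \ref{thm:main1}.

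First I would recall the bialgebra structure on the Drinfeld double. Write $D := D\bigl(\cal{H}^{\e}_{\tw}(\cal{A})\bigr)$, built from two copies $\cal{H}^{\e,+}$, $\cal{H}^{\e,-}$ of the extended Hall bialgebra glued along Green's Hopf pairing (see \cite{D}); as a coalgebra $D$ is the tensor-product coalgebra $\cal{H}^{\e,+}\otimes\cal{H}^{\e,-}$, so that $\Delta_D(x_+\cdot y_-)=\Delta(x_+)\cdot\Delta(y_-)$, the right-hand product being computed factorwise in $D\ctp D$, where $\Delta$ abbreviates $\Delta_{\cal{H}^{\e}_{\tw}(\cal{A})}$ on each Borel half. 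By \cite{Y} there is an algebra isomorphism $\Phi\colon D\to\cal{DH}(\cal{A})$ restricting to $I^{\e}_+$ on $\cal{H}^{\e,+}$ and to $I^{\e}_-$ on $\cal{H}^{\e,-}$; passing to the reduced quotients — where on both sides the two copies of the Cartan torus $\{K_\alpha\}$, $\{K^*_\alpha\}$ become mutual inverses, and the resulting $D_{\red}$ is what ``the Drinfeld double of $\cal{H}^{\e}_{\tw}(\cal{A})$'' refers to here — yields an algebra isomorphism $\Phi_{\red}\colon D_{\red}\to\cal{DH}_{\red}(\cal{A})$.

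Next I would check that $\Delta_{\chi_0}$ descends to $\cal{DH}_{\red}(\cal{A})$: the reduction imposes only relations among the grouplike elements $K_\alpha$, $K^*_\alpha$, and $\Delta_{\chi_0}(K_\alpha)=K_\alpha\otimes K_\alpha$, $\Delta_{\chi_0}(K^*_\alpha)=K^*_\alpha\otimes K^*_\alpha$ by Definition~\ref{dfn:coprod}, so the reduction ideal is a bi-ideal contained in $\Ker\ep$ and, by Theorem~\ref{thm:main1}, $(\cal{DH}_{\red}(\cal{A}),*,[0],\Delta_{\chi_0},\ep)$ is again a topological bialgebra; likewise $D_{\red}$ carries the quotient bialgebra structure. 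It then remains only to see that $\Phi_{\red}$ intertwines the coproducts (the counits are both $\ep$, so that side is trivial). Since $\cal{DH}_{\red}(\cal{A})$ is generated as an algebra by $\Img(I^{\e}_+)$ and $\Img(I^{\e}_-)$ (Fact~\ref{fct:basis}) and $\Delta_{\chi_0}$ is an algebra homomorphism (Theorem~\ref{thm:main1}), it is enough to compare on these two subalgebras: on $\cal{H}^{\e,+}$ the identity $\Delta_{\chi_0}\circ I^{\e}_+=(I^{\e}_+\otimes I^{\e}_+)\circ\Delta$ is exactly Theorem~\ref{thm:embedding}, and on $\cal{H}^{\e,-}$ it is Theorem~\ref{thm:embedding2}. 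For a general basis element $I^{\e}_+(u)*I^{\e}_-(v)$, multiplicativity of $\Delta_{\chi_0}$ then gives $\Delta_{\chi_0}\bigl(I^{\e}_+(u)*I^{\e}_-(v)\bigr)=(I^{\e}_+\otimes I^{\e}_+)(\Delta u)*(I^{\e}_-\otimes I^{\e}_-)(\Delta v)$, which is precisely $(\Phi\otimes\Phi)$ applied to $\Delta_D(u_+\cdot v_-)$ by the description of $\Delta_D$ recalled above; hence $\Phi_{\red}$ is an isomorphism of bialgebras.

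I expect the main obstacle to be bookkeeping rather than anything conceptual. One has to fix conventions for $D\bigl(\cal{H}^{\e}_{\tw}(\cal{A})\bigr)$ — in particular the normalization of Green's Hopf pairing and the precise matching of the two extended Cartan parts in the reduced quotient — so that they are compatible with the normalization built into $\Delta_{\chi_0}$ and with the choice of $\chi_0$ in Theorem~\ref{thm:embedding}; and one must verify that the reduction relations on $D$ and on $\cal{DH}(\cal{A})$ correspond under $\Phi$, so that $\Phi_{\red}$ is well defined and bijective. That last point is exactly the input imported from \cite{Y} (in its abstract form, \cite{Go}); once it is in place, the coalgebra compatibility is forced by Theorems~\ref{thm:embedding}–\ref{thm:main1} without further computation.
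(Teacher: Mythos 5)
Your proposal is correct and is essentially the argument the paper intends: the paper offers no written proof beyond the phrase ``combining the results of \cite{Go} and \cite{Y}'', and your assembly of the algebra isomorphism from \cite{Y} (or \cite{Go}) with Theorems~\ref{thm:embedding}, \ref{thm:embedding2} and \ref{thm:main1}, passed to the quotients by the grouplike relations $K_\alpha * K_\alpha^* = 1$, is the natural way to make that precise. The only point requiring genuine care is the one you already flag: matching the normalization of Green's pairing and the (co-)opposite convention on the negative half of the double against the $\Delta^{op}_{\chi,\cal{E}_0}$ of Definition~\ref{dfn:coprod} and the form of Theorem~\ref{thm:embedding2}.
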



\end{document}